\newcommand{\ga}{\gamma}
\newcommand{\acts}{\curvearrowright}
\newcommand{\eit}{\end{itemize}}
\newcommand{\mc}[1]{\mathcal{#1}}
\newcommand{\mbb}[1]{\mathbb{#1}}
\newcommand{\mr}[1]{\mathrm{#1}}
\newcommand{\mbf}[1]{\mathbf{#1}}
\patchcmd{\BR@backref}{\newblock}{\newblock(page~}{}{}
\patchcmd{\BR@backref}{\par}{)\par}{}{}
\theoremstyle{plain}
\newtheorem{theorem}{Theorem}[section]
\newtheorem*{theorem*}{Theorem}
\newtheorem{proposition}[theorem]{Proposition}
\newtheorem{corollary}[theorem]{Corollary}
\newtheorem{example}[theorem]{Example}
\newtheorem{lemma}[theorem]{Lemma}
\theoremstyle{definition}
\newtheorem{definition}[theorem]{Definition}
\theoremstyle{remark}
\newtheorem{remark}[theorem]{Remark}
\newtheorem{claim}{Claim}[theorem]
\newtheorem{pclaim}{Claim}[theorem]
\newcommand{\defi}{\begin{definition}}
\newcommand{\fdefi}{\end{definition}}
\newcommand{\eje}{\begin{example}}
\newcommand{\feje}{\end{example}}
\newcommand{\ejes}{\begin{ejemplos}}
\newcommand{\fejes}{\end{ejemplos}}
\newcommand{\lema}{\begin{lemma}}
\newcommand{\flema}{\end{lemma}}
\newcommand{\teor}{\begin{theorem}}
\newcommand{\fteor}{\end{theorem}}
\newcommand{\nota}{\begin{remark}}
\newcommand{\fnota}{ \end{remark}}
\newcommand{\clam}{\begin{claim}}
\newcommand{\fclam}{\end{claim}}
\newcommand{\pclam}{\begin{pclaim}}
\newcommand{\fpclam}{\end{pclaim}}
\newcommand{\clams}{\begin{claim*}}
\newcommand{\fclams}{\end{claim*}}
\newcommand{\prop}{\begin{proposition}}
\newcommand{\fprop}{\end{proposition}}
\newcommand{\cor}{\begin{corollary}}
\newcommand{\fcor}{\end{corollary}}
\newcommand{\lclam}{\begin{lclaim}}
\newcommand{\flclam}{\end{lclaim}}
\newcommand{\prucl}{\prue[Proof of Claim:]}
\newcommand{\fprucl}{\fprue}
\newcommand{\ben}{\begin{enumerate}}
\newcommand{\een}{\end{enumerate}}
\newcommand{\bit}{\begin{itemize}}
\newcommand{\N}{\mathbb N}
\newcommand{\R}{\mathbb R}
\DeclareMathOperator{\Quo}{\mathrm{Quo}}
\DeclareMathOperator{\flim}{\mathrm{FLim}}
\DeclareMathOperator{\iso}{\mathrm{Iso}}
\DeclareMathOperator{\age}{\mathrm{Age}}
\DeclareMathOperator{\Emb}{\mathrm{Emb}}
\DeclareMathOperator{\Eemb}{\mathcal{E}\mathrm{mb}}
\DeclareMathOperator{\Aut}{\mathrm{Aut}}
\DeclareMathOperator{\osc}{\mathrm{Osc}}
\DeclareMathOperator{\emb}{\mathrm{emb}}
\DeclareMathOperator{\ball}{\mathrm{Ball}}
\DeclareMathOperator{\aut}{\mathrm{Aut}}
\newcommand{\rest}{\upharpoonright}
\newcommand{\C}{{\mathbb C}}
\newcommand{\sig}{\sigma}
\newcommand{\vphi}{\varphi}
\newcommand{\id}{\mr{Id}}
\newcommand{\conj}[2]{ \{ {#1}\,:\,{#2} \} }
\newcommand{\prue}{\begin{proof}}
\newcommand{\fprue}{\end{proof}}
\newcommand{\al}{\alpha}
\newcommand{\Om}{\Omega}
\newcommand{\de}{\delta}
\newcommand{\De}{\Delta}
\newcommand{\la}{\lambda}
\newcommand{\ro}{\varrho}
\DeclareMathOperator{\im}{\text{Im}}
\DeclareMathOperator{\Tr}{\text{Tr}}
\newcommand{\posy}{\pmb{A({ \mbb P})}}
\newcommand{\nrm}[1]{\|#1\|}
\newcommand{\con}{\subseteq}
\newcommand{\vep}{\varepsilon}
\newcommand{\buit}{\emptyset}
\newcommand{\fin}{\textsc{FIN}}
\newcommand{\pe}{\prec}
\newcommand{\quo}{/\hspace{-0.1cm}/}
\newtheorem*{rep@theorem}{\rep@title}
\newcommand{\newreptheorem}[2]{%
\newenvironment{rep#1}[1]{%
 \def\rep@title{\bf #2 \ref{##1}}%
 \begin{rep@theorem}}%
 {\end{rep@theorem}}}
\begin{document}
\def\cprime{$'$}

\title[Ramsey property for Banach spaces and Choquet simplices]{The   Ramsey property for Banach spaces and Choquet
simplices}
\author[D. Barto\v{s}ov\'{a}]{Dana Barto\v{s}ov\'{a}}
\address{Department of Mathematical Sciences, Carnegie Mellon University,
Pennsylvania, USA}
\email{dbartoso@andrew.cmu.edu}
\author[J. Lopez-Abad]{Jordi Lopez-Abad}
\address{Departamento de Matem\'{a}ticas Fundamentales,
Facultad de Ciencias, UNED, 28040 Madrid, Spain}
\email{abad@mat.uned.es}
\author[M. Lupini]{Martino Lupini}
\address{School of Mathematics and Statistics\\
Victoria University of Wellington PO Box 600 \newline 
Wellington 6140
New Zealand}
\address{Mathematics Department\\
California Institute of Technology 
1200 E. California Blvd\newline
MC 253-37\\
Pasadena, CA 91125}
\email{martino.lupini@vuw.ac.nz}
\urladdr{http://www.lupini.org/}
\author[B. Mbombo]{Brice Mbombo}
\address{Department of Mathematics and Statistics, University of Ottawa,
Ottawa, ON, K1N 6N5, Canada}
\email{bmbombod@uottawa.ca}
\subjclass[2000]{Primary 05D10, 46B04; Secondary 47L25, 46A55}
\thanks{D.B. was supported by the grant FAPESP 2013/14458-9. J.L.-A. \ was partially supported by the grant MTM2012-31286 (Spain), the Fapesp Grant 2013/24827-1 (Brazil) and projet ANR AGRUME ANR-17-CE40-0026 (France). M.L.\ was partially supported by
the NSF Grant DMS-1600186. B. Mbombo was supported by Funda\c{c}\~{a}o de
Amparo \`{a} Pesquisa do Estado de S\~{a}o Paulo (FAPESP) postdoctoral
grant, processo 12/20084-1. This work was initiated during a visit of
J.L.-A.\ to the Universidade de Sao P\~{a}ulo in 2014, and continued during
visits of D.B.\ and J.L.-A. to the Fields Institute in the Fall 2014, a visit
of M.L.\ to the Instituto de Ciencias Matem\'{a}ticas in the Spring 2015, and
a visit of all the authors at the Banff International Research Station in
occasion of the Workshop on Homogeneous Structures in the Fall 2015. The
hospitality of all these institutions is gratefully acknowledged.}
\keywords{Gurarij space, Poulsen simplex, extreme amenability, Ramsey
property, Banach space, Choquet simplex, function systems,
oscillation stability, Dual Ramsey Theorem}

\begin{abstract}
We show that the Gurarij space $\mathbb{G}$   has extremely amenable automorphism group. This answers a question of Melleray and Tsankov. We also
compute the universal minimal flow of the automorphism group of the
Poulsen simplex $\mathbb{P}$ and we prove that it consists of  the canonical action on $\mathbb{P}$ itself.  This answers a question of
Conley and T\"{o}rnquist. We show that the pointwise stabilizer of any
closed proper face of $\mathbb{P}$ is extremely amenable. Similarly, the
pointwise stabilizer of any closed proper biface of the unit ball of the
dual of the Gurarij space (the Lusky simplex) is extremely amenable.

These results are obtained via several Kechris--Pestov--Todorcevic
correspondences, by establishing the approximate Ramsey property for several
classes of finite-dimensional Banach spaces and function  systems and their versions with distinguished contractions.  This is the first direct application of
the Kechris--Pestov--Todorcevic correspondence in the setting of metric
structures. The fundamental combinatorial principle that underpins the
proofs is the Dual Ramsey Theorem of Graham and Rothschild.
%

\end{abstract}

\maketitle





%

\section{Introduction}
Given a topological group $G$, a compact $G$-space or $G$-flow is a compact
Hausdorff space $X$ endowed with a continuous action of $G$. Such a $G$-flow 
$X$ is called minimal when every orbit is dense. There is a natural notion
of morphism between $G$-flows, given by a   $G$%
-equivariant continuous map (factor). A minimal $G$-flow is universal if it
factors onto any   minimal $G$-flow. It is a classical fact that any
topological group $G$ admits a unique (up to isomorphism of $G$-flows)
universal minimal flow, usually denoted by $M(G)$ \cite%
{ellis_universal_1960,gutman_new_2013}. For any locally compact non compact
Polish group $G$, the universal minimal $G$-flow is nonmetrizable. At the
opposite end, non locally compact topological groups often have metrizable
universal minimal flows, or even reduced to a single point. A topological
group for which $M(G)$ is a singleton is called extremely amenable.
(Amenability of $G$ is equivalent to the assertion that every compact $G$%
-space has an invariant Borel measure. Thus any extremely amenable group is,
in particular, amenable.)

The universal minimal flow has been explicitly computed for  a number of 
topological groups, typically given as automorphism groups of naturally
arising mathematical structures. Examples of extremely amenable Polish
groups include the group of order automorphisms of $\mathbb{Q}$ \cite%
{pestov_free_1998}, the group of unitary operators on the separable
infinite-dimensional Hilbert space \cite{gromov_topological_1983}, the
automorphism group of the hyperfinite II$_{1}$ factor and of infinite type
UHF $C^{\ast }$-algebras \cite{giordano_extremely_2007,eagle_fraisse_2014},
or the isometry group of the Urysohn space \cite{pestov_ramsey-milman_2002}.
Examples of nontrivial metrizable universal minimal flows include the
universal minimal flow of the group of orientation preserving homeomorphisms
of the circle, which is equivariantly homeomorphic to the circle itself \cite%
{pestov_free_1998}, the universal minimal flow of the group $S_{\infty }$ of
permutations of $\mathbb{N}$, which can be identified with the space of
linear orders on $\mathbb{N}$ \cite{glasner_minimal_2002}, and the universal
minimal flow of the homeomorphism group $\mathrm{Homeo}(2^{\mathbb{N}})$ of
the Cantor set $2^{\mathbb{N}}$, which can be seen as the canonical action
of $\mathrm{Homeo}(2^{\mathbb{N}})$ on the space of maximal chains of closed
subsets of $2^{\mathbb{N}}$ \cite%
{uspenskij_universal_2000,glasner_universal_2003,kechris_fraisse_2005}.

There are essentially two known ways to establish extreme amenability of a
given topological group. The first method involves the phenomenon of \emph{%
concentration of measure}, and can be applied to topological groups that
admit an increasing sequence of compact subgroups with a  dense union \cite[%
Chapter 4]{pestov_dynamics_2006,gromov_topological_1983}. The second method applies to automorphism
groups of discrete ultrahomogeneous structures or, more generally,
approximately ultrahomogeneous metric structures \cite[Chapter 6]%
{pestov_dynamics_2006}. A metric structure is approximately ultrahomogeneous
if any partial isomorphism between finitely generated substructures is the
pointwise limit of maps that are restrictions of automorphisms. It is worth
noting that any Polish group can be realized as the automorphism group of an
approximately ultrahomogeneous metric structure \cite[Theorem 6]%
{melleray_note_2010}. For the automorphism group $\mathrm{Aut}(M)$ of an
approximately ultrahomogeneous structure $M$, extreme amenability is
equivalent to the approximate Ramsey property of the class of finitely
generated substructures of $M$. This criterion is known as the
Kechris--Pestov--Todorcevic (KPT) correspondence, first established in \cite%
{kechris_fraisse_2005} for discrete structures, and recently generalized to
the metric setting in \cite{melleray_extremely_2014}. The discrete KPT
correspondence has been used extensively in the last decade. In this paper
the KPT correspondence is directly used for the first time to obtain new
natural extreme amenability results.


In all the known examples of computations of metrizable universal minimal
flows, the argument hinges on extreme amenability of a suitable subgroup and
the following result due to Nguyen Van Th\'{e} \cite%
{nguyen_van_the_more_2013} based on previous work of Pestov \cite%
{pestov_free_1998}. Suppose that $G$ is a topological group with an
extremely amenable closed subgroup $H$. If the completion $X$ of the
homogeneous space $G/H$ endowed with the quotient of the right uniformity on 
$G$ is a minimal compact $G$-space, then $X$ is the universal minimal flow
of $G$. It was recently shown in \cite%
{melleray_polish_2016,ben_yaacov_metrizable_2017} that, whenever the
universal minimal flow of $G$ is metrizable, it can be realized as the
completion of $G/H$ for a suitable closed subgroup $H$ of $G$.

In this paper we compute the universal minimal flows of the automorphism
groups of structures coming from functional analysis and Choquet theory: the
Gurarij space $\mathbb{G}$ and the Poulsen simplex $\mathbb{P}$.
 Recall that the Gurarij space is the unique
separable {\em approximately ultrahomogeneous} Banach space that contains $\ell
_{n}^{\infty }$ for every $n\in \mathbb{N}$ \cite{lusky_gurarij_1976}, while 
$\mathbb{P}$ is the unique nontrivial metrizable Choquet simplex with dense
extreme boundary \cite{lindenstrauss_poulsen_1978}. The group $\mathrm{Aut}(%
\mathbb{G})$ of surjective linear isometries of the Gurarij space is shown
to be extremely amenable by establishing the approximate Ramsey property of
the class of finite-dimensional Banach spaces. This answers a question of
Melleray and Tsankov from \cite{melleray_extremely_2014} Similarly, the
stabilizer $\mathrm{Aut}_{p}(\mathbb{P})$ of an extreme point $p$ of $%
\mathbb{P}$ is proven to be extremely amenable by establishing the
approximate Ramsey property of the class of Choquet simplices with a
distinguished point. It is then deduced from this that the universal minimal
flow of $\mathrm{Aut}(\mathbb{P})$ is $\mathbb{P}$ itself, endowed with the
canonical action of $\mathrm{Aut}(\mathbb{P})$. This answers Question 4.4
from \cite{conley_fraisse_2017}. More generally, we prove that for any
closed face $F$ of $\mathbb{P}$, the pointwise stabilizer $\mathrm{Aut}_{F}(%
\mathbb{P})$ is extremely amenable. The analogous result holds in the Banach
space setting as well. A \emph{Lazar simplex} is a compact absolutely convex
set that arises as the unit ball of the dual of a Lindenstrauss space. The 
\emph{Lusky simplex }$\mathbb{L}$ is the Lazar simplex that arises in this
fashion from the Gurarij space. The group $\mathrm{Aut}(\mathbb{G})$ can be
identified with the group $\mathrm{Aut}(\mathbb{L})$ of symmetric affine
homeomorphisms of $\mathbb{L}$. It is proven in \cite[Theorem 1.2]%
{lupini_fraisse_2016} that $\mathbb{L}$ plays the same role among Lazar
simplices as the Poulsen simplex plays in the class of Choquet simplices,
where closed faces are replaced with closed bifaces. We prove that, for any
closed proper biface $H$ of $\mathbb{L}$, the corresponding pointwise
stabilizer $\mathrm{Aut}_{H}(\mathbb{L})$ is extremely amenable. In the
particular case when $H$ is the trivial biface, this recovers the extreme
amenability of $\mathrm{Aut}(\mathbb{G})$.

Recall that a \emph{function system} is a closed subspace $V$ of  the space of continuous $\mbb C$-valued functions $C(T)$ of some compact Hausdorff space $T$ containing the function constantly equal to 
$1$ and such that if $f\in V$ then the function $f^{\ast }$ defined by $%
f^{\ast }\left( t\right) =\overline{f\left( t\right) }$ also belongs to $V$.
In particular, when $K$ is a compact convex set, the  space $A(K)$ of continuous complex-valued affine functions on $K$   is a function system, and in fact any function system $%
V\subseteq C(T)$ arises in this way from a suitable compact convex set $K$.
Precisely, $K$ is the compact convex set of \emph{states }of $V$, that is,
the contractive functionals on $V$ that are \emph{unital}, i.e.\ map the
unit of $C(T)$ to $1$ (\cite[Theorem II.1.8]{alfsen_compact_1971}).
%
%
%
%
Furthermore, the map $K\mapsto A(K)$ is a contravariant isomorphism of
categories from the category of compact convex sets and continuous affine
maps, to the category of function systems and unital linear contractions (%
\emph{Kadison correspondence}). A metrizable compact convex set $K$ is a
simplex if and only if $A(K)$ is a separable Lindenstrauss space, which
means that the identity map of $A(K)$ is the pointwise limit of a sequence
of unital completely contractive maps that factor through finite-dimensional
(abelian) $C^{\ast }$-algebras. 
The function system $A(\mathbb{P})$ corresponding to the Poulsen simplex is the unique separable approximately
ultrahomogeneous function system that contains unital copies of $\ell
_{n}^{\infty }$ for $n\in \mathbb{N}$ \cite[Theorem 1.1]{lupini_fraisse_2016}%
. The automorphism group $\mathrm{Aut}(A(\mathbb{P}))$ can be identified
with the group of affine homeomorphisms of  $\mathbb{P}$. The Poulsen simplex 
$\mathbb{P}$ is then equivariantly homeomorphic to the state space of $A(%
\mathbb{P})$.

The main tool to establish the results mentioned above will be the Dual
Ramsey Theorem of Graham and Rothschild \cite{graham_ramseys_1971}. This is
a powerful pigeonhole principle known to imply many other results, such as
the Hales--Jewett theorem, and the Ramsey theorem. It can be seen to be
equivalent to a factorization result for colorings of Boolean matrices,
which implies the celebrated Graham--Leeb--Rothschild theorem on
Grassmannians over a finite field \cite{graham_ramseys_1972}.     In fact, it is shown in \cite{bartosova_ramsey_2017,bartosova_ramsey_2019} that  this
is again a particular case of a factorization result for colorings of
matrices over a finite field, stating that the coloring of matrices only
depends on the invertible matrix needed to transform a given matrix into one
in reduced column echelon form. In this paper, we provide factorization
theorems for colorings of matrices and Grassmannians over the real or
complex numbers. We prove in particular that colorings of matrices depend
only on the canonical norm that a given matrix determines, while colorings
of Grassmannians are determined by the Banach-Mazur type of the given
subspace.

The paper is organized as follows.  We start in Subsection \ref{njhjiuyrtt}   by recalling some basic concepts such as extreme
amenability. In Subsection \ref{oi9998555} we recall  and introduce different versions of ultrahomogeneity and Ramsey properties for Banach spaces, and we prove  a version of the KPT correspondence in this setting (Theorem \ref{liwjr3iwejirwe}).   In Subsection \ref{oi23349283} we  prove the approximate Ramsey property (ARP)  of the class $\{\ell_\infty^n\}_n$.   This has as a consequence the extreme amenability of the group of isometries of the Gurarij space. In Subsection \ref{uiuhyuuy893232321} we prove the (ARP) of the class of   polyhedral finite-dimensional spaces, and the class of all finite-dimensional Banach spaces.  Using this,  in Subsection \ref{uiuhyuuy893232326} we give a direct proof of the (ARP) for the class of finite metric spaces. This provides a new proof of extreme amenability of the isometry group of the Urysohn space  \cite{pestov_ramsey-milman_2002}.  Subsection \ref{lio3j4oirjir4488}  studies closed bifaces of Lusky simplices. We prove that  group stabilizers of closed proper bifaces of the Lazar simplex are extremely amenable. This is done by establishing the corresponding (ARP) and a (KPT)-correspondence, introduced in Subsubsection \ref{775939hhttti}.  In Section \ref{Sec:systems} we study Choquet simplices (with a distinguished face), and we prove that pointwise stabilizer of any closed proper face of the Poulsen simplex is extremely amenable. We conclude in Subsection  \ref{oi5t8t895466655} we prove that the universal minimal flow of the group of affine homeomorphisms of the Poulsen simplex $\mbb P$ is the canonical action on $\mbb P$.
%
%
%

\subsubsection*{Acknowledgments}

We are grateful to Ita\"{\i} Ben Yaacov, Clinton Conley, Valentin Ferenczi,
Alexander Kechris, Matt Kennedy, Julien Melleray, Lionel Nguyen Van Th\'{e},
Vladimir Pestov, Slawomir Solecki, Stevo Todorcevic, and Todor Tsankov for
several helpful conversations and remarks.


\section{The Ramsey property of Banach spaces}
\label{basic_notions}
The goal of this section is to introduce different notions of ``Ramsey property'' for several classes of structures. We show that, in the setting we are interested in, such notions are equivalent to each other. We furthermore establish an analogue of the Kechris--Pestov--Todorcevic correspondence in this section. We then establish the (stable) Ramsey property for the class of Banach spaces $\{\ell_\infty^n\}_n$. From this, we infer that that the group of isometries of the {\em Gurarij} space is extremely amenable.  


%

\subsection{Colorings and extreme amenability}\label{njhjiuyrtt}

We introduce some terminology to be used in the following. A \emph{metric
coloring} of a pseudo-metric space $M$ is a $1$-Lipschitz map from $M$ to a
metric space $(K,d_{K})$. A metric coloring with target space $\left(
K,d_{K}\right) $ will also be called a $K$-coloring. Following \cite%
{melleray_extremely_2014}, a \emph{continuous coloring} is a metric coloring
whose target space is the closed unit interval $\left[ 0,1\right] $. A \emph{%
compact coloring }is a metric coloring whose target space is a compact
metric space. For a subset $X$ of a compact metric space $\left(
K,d_{K}\right) $ and $\varepsilon >0$, the $\varepsilon $-fattening $%
K_{\varepsilon }$ denotes the set of elements of $K$ at distance at most $%
\varepsilon $ from some element of $X$.

The \emph{oscillation }$\mathrm{osc}_{F}(c)$ of a compact coloring $%
c:M\rightarrow (K,d_{K})$ on a subset $F$ of $M$ is the supremum of $%
d_{K}(c(y),c(y^{\prime }))$ where $y,y^{\prime }$ range within $F$. If $%
\mathrm{osc}_{F}(c)\leq \varepsilon $, then we  say that $c$ $%
\varepsilon $-\emph{stabilizes }on $F$, or that $F$ is $\varepsilon $-\emph{%
monochromatic }for $c$. A \emph{finite} coloring of $M$ is a function  from  $M$ to a finite set $X$. When the target space is a natural number $%
r$ (identified with the set $\left\{ 0,1,\ldots ,r-1\right\} $ of its
predecessors), we will say that $c$ is an $r$-coloring. A subset $F$ of $M$
is \emph{monochromatic} for $c$ if $c(p)=c(q)$ for every $p,q\in F$, and $%
\varepsilon $\emph{-monochromatic} for $c$ if there exists some $x\in X$
such that for every $p\in F$ there is $q\in M$ such that $c(q)=x$ and $%
d_{M}(p,q)\leq \varepsilon $. If $F$ is $\varepsilon $-monochromatic, then
we also say that $c$ $\varepsilon $-stabilizes on $F$.

Given a Polish group $G$ and a continuous action $G\curvearrowright M$ of $G$
on a metric space $(M,d_{M})$, we write $[p]_{G}$ to denote
the closure of the $G$-orbit of $p\in M$, and $M/\hspace{-0.1cm}/G$ to
denote the space of closures of $G$-orbits of $M$. Since $G$ acts by
isometries the formula 
\begin{equation*}
\widehat{d}_{G,M}([p],[q]):=\inf \{{d_{M}(\bar{p},\bar{q})}\,:\,{\bar{p}\in
[p],\,\bar{q}\in [q]}\}
\end{equation*}%
defines the quotient pseudometric induced by the quotient map $\pi
_{M,G}:M\mapsto M/\hspace{-0.1cm}/G$, and since we consider \emph{closures}
of orbits, $\widehat{d}_{G,M}$ is a metric. It is easy to see that $\widehat{%
d}_{G,M}$ is complete when $d_{M}$ is complete.

When $M$ is endowed with an action of a Polish group $G$ we  say that $M$ is a metric $G$-space. A compact coloring $c:(M,d_M)\to (K,d_K)$ is {\em finitely $G$-factorizable} when there is a $K$-coloring $\widehat{c}:M\quo G\to K$  defined on the space $M\quo G$ of closed $G$-orbits of $M$ such that for every $\vep>0$ and every  compact subset $F\con M$ there is some $g\in G$ such that $d_K(c(p), \widehat{c}([p]_G))\le \vep$ for every $p\in g\cdot F$, where $[p]_G$ is the closed $G$-orbit of $p$.  Similarly, $c$ is \emph{finitely oscillation stable }%
\cite[Definition 1.1.8]{pestov_dynamics_2006} if for every compact subset $F$
of $M$ and $\varepsilon >0$ there exists $g\in G$ such that $c$ \emph{$%
\varepsilon $-stabilizes} on $g\cdot F$. We say that the action of $G$ on $M$
is \emph{finitely oscillation stable }if every continuous coloring of $M$ is
finitely oscillation stable \cite[Definition 1.1.11]{pestov_dynamics_2006}.

%
%
%

Given a compact metric space $(K,d_{K})$, we let $\mathrm{Lip}%
((M,d_{M}),(K,d_{K}))$ be the collection of all $K$-colorings of $M$. With
the topology of pointwise convergence $\mathrm{Lip}((M,d_{M}),(K,d_{K}))$ is
a compact space, which is metrizable when $(M,d_{M})$ is separable. The
continuous action $G\curvearrowright (M,d_{M})$ induces a natural continuous
action $G\curvearrowright \mathrm{Lip}((M,d_{M}),(K,d_{K}))$, defined by
setting $(g\cdot c)(p):=c(g^{-1}\cdot p)$ for every $c\in \mathrm{Lip}%
((M,d_{M}),(K,d_{K}))$ and $p\in M$.

\begin{lemma}\label{Lemma:oscillation}

 Suppose that $G$ is a Polish group, and that $M$ is a metric $G$-space. Let   $\mc F$ be a $\con$-directed family of compact subsets of $M$ whose union is $M$.  The following assertions are equivalent:
\begin{enumerate}[1)]
\item  Every compact coloring of $M$ is finitely $G$-factorizable.
\item  For every $F\in\mc F$, every compact metric space $K$ and every $\vep>0$ there is a $H\in \mc F$ such that for every coloring $c:H\to K$  there is a coloring $\widehat{c}:H\quo G\to K$ and $g\in G$ such that $g F\con H$ and such that $d_K( c(p), \widehat{c}([p]_G))\le \vep$ for every $p\in g \cdot F$.
\end{enumerate}

\end{lemma}

\begin{proof}
Suppose that {\it 1)} holds but not {\it 2)}. Fix the counterexample $K$, $\mc F$, $M$, $\vep>0$ and $F\in \mc F$ and for each $H\in \mc F$ containing $F$  we fix a bad coloring $c_H: H\to K$.  For each $V\in \mc F$, let $\langle V\rangle$ be the collection of those $W\in\mc F$ containing $V$.    Choose a non-principal ultrafilter $\mc U$ on $\mc F$ containing each $\langle V\rangle$. This is possible since $\mc F$ is $\con$-directed. Define $c_\mc U: M\to K$ by declaring $c_\mc U(p):=\mc U-\lim c_H(p)$. This is well defined because there is some $H\in \mc F$ such that $p\in H$. Let $\widehat{c}:M\quo G \to K$ be the corresponding factorization, and let $g$ be such that $d_K(\widehat{c}([p]_G), c_\mc U(p))\le \vep/2$ for every $p\in g\cdot F$. Choose $H\in \mc F$ to  be such that $p\cdot F\con H$ and such that $d_K(c_H(p), c_\mc U(p))\le \vep/2$ for every $p\in g\cdot F$. Then the restriction $\widehat{c}: H\quo G\to K$ disproves that $c_H$ is a bad color.  Suppose now that {\it 2)} holds but not {\it 1)}. This means that there is some $c:M\to K$ that cannot be finitely $G$-factorized, so we fix the corresponding $\vep>0$.   For every  $F\in \mc F$ we use {\it 2)} for it,  $K$, and for $\vep/2$ to find the corresponding    $H_F\in \mc F$, and then we apply the property of it to the restriction $c: H\to K$ to find  $e_F: H\quo G\to K$.   Now define $\widehat{c}: M\quo G\to K$ as  the $\mc U$-limit of $(e_F)_{F}$.   Since $\widehat{c}$ does not finitely $G$-factorize $c$ there must be a bad compact $A$ witnessing this. Without loss of generality we may assume that $A$ is a finite set. Let $F\in \mc F$ be such that $A\con F$, and let $H\in \conj{V\in \langle F\rangle}{ d_K(\widehat{c}([p]_G), e_V([p]_G))\le \vep/2 \text{ for every $p\in F$}}\in \mc U$. Let $g\in G$ be such that 
$d_K(c(p), e_H([p]_G))\le \vep/2$ for every $p\in g \cdot F$, and consequently, $d_K(c(p),\widehat{c}([p]_G))\le \vep$ for every $p\in g\cdot A$, contradicting the defining property of $A$. 
\end{proof} 

 
Recall that a topological group $G$ is called \emph{extremely amenable} if
every continuous action of $G$ on a compact Hausdorff space has a fixed
point.  The following characterization of extreme amenability will be used
extensively in this paper. 

\begin{proposition}
\label{factor_orbitspace0} Suppose that $G$ is a Polish group. The following assertions are
equivalent.

\begin{enumerate}[1)]
\item $G$ is extremely amenable.

\item For every  
  left-invariant compatible metric $d_{G}$ on $G$, the left translation of $G$ on $\left( G,d_{G}\right) $ is finitely
oscillation stable.

\item Every compact coloring of a metric $G$-space is finitely $G$-factorizable.


\item Let $M$ be a metric $G$-space, and let $\mc F$ be a $\con$-directed family of compact subsets of $M$ whose union is $M$. For every $F\in\mc F$, every compact metric space $K$ and every $\vep>0$ there is an $H\in \mc F$ such that for every coloring $c:H\to K$  there is a coloring $\widehat{c}:H\quo G\to K$ and $g\in G$ such that $g F\con H$ and such that $d_K( c(p), \widehat{c}([p]_G))\le \vep$ for every $p\in g \cdot F$.

\end{enumerate}
\end{proposition}

\begin{proof}
The equivalence of \textit{1)} and \textit{2)} can be found in \cite[%
Theorem 2.1.11]{pestov_dynamics_2006}. The implication \textit{3)}$%
\Rightarrow $\textit{2)} is immediate, since orbit space $G/\hspace{-0.1cm}%
/G$ is one point. We now establish the implication \textit{1)}$\Rightarrow $%
\textit{3)}: Fix a 1-Lipschitz $c:(M,d_{M})\rightarrow (K,d_{K})$. Let $L$
be the closure of the $G$-orbit of $c$ in $\mathrm{Lip}((M,d_{M}),(K,d_{K}))$%
. By the extreme amenability of $G$, there is some $c_{\infty }\in L$ such
that $G\cdot c_{\infty }=\{c_{\infty }\}$, so we can define $\widehat{c}:M/%
\hspace{-0.1cm}/G\rightarrow K$ by $\widehat{c}([p]_{G}):=c_{\infty }(p)$.
It is clear that $\widehat{c}$ is 1-Lipschitz. Given a compact subset $F$ of 
$M$, let $g\in G$ be such that $\max_{p\in F}d_{K}(c_{\infty }(p),c(g\cdot
p))<\varepsilon $. If $x\in F$, then $d_{K}(c(g\cdot x),\widehat{c}([g\cdot
x]_{G}))=d_{K}(c(g\cdot x),c_{\infty }(x))<\varepsilon $. The equivalence of  \textit{(3)} and 
\textit{(4)} follows from Lemma \ref{Lemma:oscillation}.
\end{proof}

\subsection{The  Ramsey property and the KPT correspondence for Banach spaces}\label{oi9998555}

In this section, we provide a  characterization of extreme amenability of the isometry group of a Banach space (endowed with the topology of pointwise convergence).  This can be seen as an analogue in this context of the Kechris-Pestov-Todorcevic from \cite{kechris_fraisse_2005}. A more general KPT correspondence for arbitrary metric structures is the topic of \cite{melleray_extremely_2014}.

We introduce some basic terminology on Banach spaces.  Let $\mbb F$ be equal to $ \mbb R$ or to $\mbb C$. Given $n\in \N$, and $1\le p<\infty$, let $\ell_p^n$ be the normed space $(\mbb F^n,\nrm{\cdot}_p)$ where $\nrm{(a_j)_{j<n}}_p=(\sum_{j<n} |a_j|^p)^{1/p}$ is the $p$-norm; similarly, let $\ell_\infty^n=(\mbb F^n,\nrm{\cdot}_\infty)$ where $\nrm{(a_j)_{j<n}}_\infty:=\max_{j<n} |a_j|$. Given a Banach space $(X,\nrm{\cdot})$, let  $\mr{Ball}(X):=\conj{x\in X}{\nrm{x}\le 1}$, $\mr{Sph}(X):=\conj{x\in X}{\nrm{x}=1}$ be the unit ball an the unit sphere of $X$, respectively. Recall that given two Banach spaces $X,Y$,  a {\em contraction} $T: X\to Y$ is a bounded linear mapping $T:X\to Y$ such that $\nrm{T}:=\max_{\nrm{x}\le 1}\nrm{T(x)}\le 1$. Given $\de\ge 0$, let $\Emb_\de(X,Y)$ be the space of contractions $T:X\to Y$ such that $\nrm{Tx}\ge  \nrm{x}/(1+\de)$, endowed with its norm metric, $d(T,U):=\nrm{T-U}:=\max_{\nrm{x}\le 1}{\nrm{T(x)-U(x)}}$; when $\de=0$, $\Emb(X,Y):=\Emb_0(X,Y)$ is the space of {\em isometric embeddings} from $X$ into $Y$.   Dually, when $X$ and $Y$ are finite-dimensional, a {\em quotient map} $T:X\to Y$ is a linear mapping such that $T(\ball(X))=\ball(Y)$. The space of those quotient maps is denoted by $\Quo(X,Y)$. It is well-known that   $T\in \Emb(X,Y)$ if and only if its {\em dual} operator $T^*:Y^*\to X^*$ is a quotient map, and this assignment is an isometry.  Finally, given a   Banach space $E$,  let $\iso(E)$  be the group of surjective isometries of $E$, endowed with its strong operator topology (SOT), and  observe that  $\iso(E)$ acts  continuously on $\Emb_\de(X,E)$ by left composition, $g\cdot T:= g\circ T$.

%
%
%
%
%

In particular, suppose that  $X$ is a finite-dimensional subspace of $E$. Given a finite-dimensional subspace $Y$ of $E$ containing $X$ we can canonically identify $\Emb(X,Y)$ with the collection of those isometric embeddings $T:X\to E$ such that $\im T\con Y$, so in this way $\Emb(X,E)=\bigcup_{X\con Y\con E} \Emb(X,Y)$, where each $\Emb(X,Y)$ is a compact subset of $\Emb(X,E)$.  Suppose that $\iso(E)$ is extremely amenable. By applying Proposition \ref{factor_orbitspace0} we obtain that  given
such  $X\con Y$, compact metric $(K,d_K)$ and $\vep>0$ we can find   some  finite-dimensional subspace $Z_0$ of $E$  such that for every coloring $c:\Emb(X,Z_0)\to K$ there is  some $g\in \iso(E)$ such that 
\begin{equation}\label{lkejirwijree}
\text{there is a coloring $\widehat{c}:\Emb(X,Z_0)\quo \iso(E)\to K$ and $\max_{\ga\in \Emb(X,Y)}d_K( c(g\circ \ga) , \widehat{c}([\ga]_{\iso(E)}))<\vep/2$.}
\end{equation}
We consider on $\mr{Lip}(\Emb(X,Z_0), K)$  the compatible metric  defined  for $K$-colorings $c_1$ and $c_2$ by $d(c_1,c_2):=\max_{\ga\in \Emb(X,Z_0)}d_K(c_1(\ga),c_2(\ga))$. Using that   $\mr{Lip}(\Emb(X,Z_0),K)$ is compact, we can  find a finite $\vep/2$-dense subset $D$ of it,  and for each $c\in D$ we choose some $g_c\in \iso(E)$   witnessing \eqref{lkejirwijree}.  Let $Z$ be a  finite-dimensional subspace of $E$ containing $Y$ and $\bigcup_{c\in D}g_c Y$. Then $Z$ has the  property that for every coloring $c:\Emb(X,Z)\to K$ there is $g\in \iso(E)$ and $\widehat c: \Emb(X,Z)\quo \iso(E)\to  K $ with the property that $g Y\con Z$ and $d_K(c(g\circ \ga),\widehat c([g]_{\iso(E)}))\le \vep$ for every $\ga\in \Emb(X,Y)$.  This means in particular that the oscillation of $c$ in $g\circ \Emb(X,Y)$ is determined by the diameter of $\Emb(X,E)\quo \iso(E)$.  Recall that an action $G\acts M$ of a group $G$ on a metric space $ (M,d)$ is {\em $\vep$-transitive}, for some $\vep>0$, when the diameter of $M\quo G$ is at most $\vep$, that this, when  for every $x,y\in M$ there is some $g\in G$ such that $d(g \cdot x,y)\le \vep$.  $G\acts M$ is {\em approximately transitive} when it is $\vep$-transitive for every $\vep>0$, or equivalently, when $M\quo G$ consists of a point.   
\defi\label{approx_uh}
A Banach space $E$ is called {\em approximately ultrahomogeneous} when for every finite-dimensional subspace $X$ of $E$  one has that $\iso(E)\acts \Emb(X,E)$ is approximately transitive.   
\fdefi

Hence, we obtain the following. 
\cor\label{i3ij3ior3}
Suppose that $E$ is approximately ultrahomogeneous, and suppose that $\iso(E)$ is extremely amenable.   Then for every finite-dimensional subspaces $X\con Y$ of $E$ and every compact metric space $(K,d_K)$ there is some finite-dimensional subspace $Z$ of $E$ containing $Y$ with the property that every coloring $c:\Emb(X,Z)\to K$ $\vep$-stabilizes in some set of the form $\ga\circ \Emb(X,Y)$.  \qed
\fcor	

Up to now the list of known approximately ultrahomogeneous (real or complex) Banach spaces is: 
\begin{enumerate}[$\bullet$]
\item  Hilbert spaces (indeed, they are ultrahomogeneous, i.e. the algebraic quotients $\Emb(X,E)/G$ consist always of a point);
\item The Lebesgue spaces  $L_p[0,1]$ when $p\notin 2\N$, proved by W. Lusky in \cite{lusky_consequences_1978};
\item The Gurarij space $\mathbb{G}$.

\end{enumerate} 
The original
characterization of the Gurarij space considered by Gurarij \cite%
{gurarij_spaces_1966} and Lusky \cite%
{lusky_gurarij_1976,lusky_separable_1977,lusky_construction_1979} is as
the unique separable Banach space satisfying the following extension
property: for every finite-dimensional\ Banach spaces $E\subseteq F$, linear
contraction $\vphi :E\rightarrow \mathbb{G}$, and $\varepsilon >0$, there
exists an extension $\hat{\vphi}:F\rightarrow \mathbb{G}$ satisfying $||\hat{%
\vphi}||{}<1+\varepsilon $. The fact that such a space is indeed approximately ultrahomogeneous as in
Definition \ref{approx_uh} is proved by I.\ Ben Yaacov in \cite{ben_yaacov_fraisse_2015}.

The isometry groups (endowed with the strong operator topology) of the Banach spaces in the list above have very special topological dynamical properties. The groups $\iso( L_p(0,1))$ are extremely amenable for every $1\le p<\infty$, which was proved in the case of $p=2$ by  M. Gromov and V. D. Milman \cite{gromov_topological_1983} and for $p\neq 2$ by T. Giordano and V. Pestov \cite{giordano_extremely_2007}. Both of the cases use the method of concentration of measure. In this paper we prove the following.
\begin{theorem}\label{zcxvbnjhgfd}
 The group of isometries of the Gurarij space endowed with the strong operator topology is extremely amenable. 
\end{theorem}  
Our proof is not based on concentration of measure, but on a combinatorial property, the approximate Ramsey property, that characterizes the     extreme amenability of certain  isometry groups.  With a similar approach this has been extended in \cite{bartosova_ramsey_2017}     to the context of operator spaces.   
We now to introduce several variants of the Ramsey property for Banach spaces.

\begin{definition}[Approximate  Ramsey Property]
\label{Definition:ARPdfsfds}  Let  $\mc F$   be a family of finite-dimensional Banach spaces.
 \begin{enumerate}[a)]
\item  $\mc F$   satisfies the \emph{approximate Ramsey property (ARP)}
if for any $X,Y\in \mathcal{F}$ and $\varepsilon >0$  there exists $Z\in 
\mathcal{F}$ such that any  continuous coloring  of $\mathrm{Emb}
(X,Z)$ $\varepsilon $-stabilizes on $\gamma \circ \mathrm{Emb}%
(X,Y)$ for some $\gamma \in \mathrm{Emb}(Y,Z)$.

\item   $\mc F$ satisfies the \emph{compact approximate Ramsey property}
when for any $X,Y\in \mathcal{F}$, $\varepsilon >0$ and compact metric space $(K,d_K)$ there exists $Z\in 
\mathcal{F}$ such that any $K$-coloring  of $\mathrm{Emb}
(X,Z)$ $\varepsilon $-stabilizes on $\gamma \circ \mathrm{Emb}%
(X,Y)$ for some $\gamma \in \mathrm{Emb}(Y,Z)$.  
  \item  $\mathcal{F}$ satisfies the \emph{discrete approximate Ramsey property}   when for every $X,Y\in \mc F$, $r\in \N$ and $\vep>0$ there is some $Z\in \mc F$ such that any $r$-coloring of $\Emb(X,Z)$ $\vep$-stabilizes on $\ga\circ \Emb(X,Y)$ for some $\ga\in \Emb(Y,Z)$.

\end{enumerate}  

%
%
%
\end{definition}

So, rephrasing Corollary \ref{i3ij3ior3}, if $E$ is an approximately ultrahomogeneous Banach space whose isometry group is extremely amenable, then the class $\mr{Age}(E)$ of finite-dimensional subspaces of $E$ has the approximate Ramsey property.   Conversely, we will see in Theorem   \ref{liwjr3iwejirwe} that in fact the (ARP) of $\age(E)$ characterizes the extreme amenability of $\iso(E)$ for approximately ultrahomogeneous spaces $E$.   Now we show that the different versions of the  Ramsey property are in  fact equivalent. 

\begin{proposition}
\label{ARP=DARP} The following are equivalent for a class $\mathcal{F}$ of finite-dimensional Banach spaces:

\begin{enumerate}[1)]
\item $\mathcal{F}$ satisfies the (ARP).

\item $\mathcal{F}$ satisfies the compact (ARP). 
\item $\mathcal{F}$ satisfies the \emph{discrete} (ARP).

\end{enumerate}
\end{proposition}

\begin{proof}
The  compact (ARP) obviously implies the   (ARP). Suppose that $\mathcal{F}$ satisfies the   (ARP), and let us prove that $%
\mathcal{F}$ satisfies the discrete (ARP). This is done by induction on $%
r\in \mathbb{N}$. The case $r=1$ is trivial. Suppose that we have shown that 
$\mathcal{F}$ satisfies the discrete (ARP) for $r$-colorings. Consider $%
X,Y\in \mathcal{F}$ and $\varepsilon >0$. Then by the inductive hypothesis,
there is $Z_{0}\in \mathcal{F}$ such that every $r$-coloring of $\mathrm{%
\mathrm{Emb}}\left( X,Z_{0}\right) $ $\varepsilon $-stabilizes
on $\gamma \circ \mathrm{Emb}\left( X,Y\right) $ for some $%
\gamma \in \mathrm{Emb}\left( Y,Z_{0}\right) $. Since by the
assumption $\mathcal{F}$ satisfies the continuous (ARP), there is $Z\in \mathcal{F}$
such that every continuous coloring of $\mathrm{Emb}\left(
X,Z\right) $ $\varepsilon /2$-stabilizes on $\gamma \circ \mathrm{Emb}\left( X,Z_{0}\right) $ for some $\gamma \in \mathrm{Emb}\left( Z_{0},Z\right) $. We claim that $Z$ witnesses that $%
\mathcal{F}$ satisfies the discrete (ARP) for $\left( r+1\right) $%
-colorings. Indeed, suppose that $c$ is an $\left( r+1\right) $-coloring of $%
\mathrm{Emb}\left( X,Z\right) $. Define $f:\mathrm{Emb}\left( X,Z\right) \rightarrow \left[ 0,1\right] $ by $f\left(
\phi \right) :=\frac{1}{2}d\left( \phi ,c^{-1}\left( r\right)
\right) $. This is a continuous coloring, so by the choice of $Z$ there
exists $\gamma \in \mathrm{Emb}(Z_{0},Z)$ such that $f$ $%
\varepsilon /2$-stabilizes on $\gamma \circ \mathrm{Emb}(X,Z_{0})$. Now, if there is some $\phi \in \mathrm{Emb}(X,Z_{0})$ such that $c(\gamma \circ \phi )=r$, then $\gamma \circ \mathrm{%
Emb}(X,Z_{0})\subseteq (c^{-1}(r))_{\varepsilon }$, so choosing
an arbitrary $\bar{\gamma}\in \mathrm{Emb}(Y,Z_{0})$ we obtain
that $c$ $\varepsilon $-stabilizes on $\gamma \circ \bar{\gamma}\circ 
\mathrm{Emb}(X,Y)$. Otherwise, $(\gamma \circ \mathrm{Emb}(X,Z_{0}))\cap c^{-1}(r)=\emptyset $, so defining $\bar{c}(\phi
):=c\left( \gamma \circ \phi \right) $ for $\phi \in \mathrm{Emb}(X,Z_{0})$ gives an $r$-coloring of $\mathrm{Emb}(X,Z_{0})$.
By the choice of $Z_{0}$ there exists $\bar{\gamma}\in \mathrm{Emb}(Y,Z_{0})$ such that $\bar{c}$ $\varepsilon $-stabilizes on $\bar{\gamma}%
\circ \mathrm{Emb}(X,Y)$. Therefore $c$ $\varepsilon $%
-stabilizes on $\gamma \circ \bar{\gamma}\circ \mathrm{Emb}(X,Y) $. This concludes the proof that the continuous (ARP) implies the discrete (ARP).

Finally, the discrete (ARP) implies the  compact (ARP). In fact, given $%
\varepsilon >0$ and a compact metric space $K$, one can find a finite $%
\varepsilon $-dense subset $D\subseteq K$. Thus if $Z\in \mathcal{F}$
witnesses the discrete (ARP) for $X,Y$, $\varepsilon $ and $D$, then given a 
$1$-Lipschitz $f:\mathrm{Emb}(X,Z)\rightarrow K$ we can define a
coloring $c:\mathrm{Emb}(X,Z)\rightarrow D\subseteq K$ such
that $d_{K}(c(\phi ),f(\phi ))\leq \varepsilon $ for every $\phi \in \mathrm{%
Emb}(X,Z)$. In this way, if $c$ $\varepsilon $-stabilizes on $%
\gamma \circ \mathrm{Emb}(X,Y)$, then $f$ $3\varepsilon $%
-stabilizes on $\gamma \circ \mathrm{Emb}(X,Y)$.
\end{proof}

%

We are going to see that, when $E$ is approximately ultrahomogeneous, the extreme amenability of $\iso(E)$ is equivalent to the (ARP) of $\age(E)$ and, in fact, also to a stronger version of the Ramsey property for a rich subfamily of $\age(E)$. To state this property we  
recall that for two $k$-dimensional Banach spaces $X, Y$, the {\em Banach-Mazur} (pseudo)distance is defined by
$$d_{\mr{BM}}(X,Y)=\log \left( \min\conj{\nrm{T}\cdot \nrm{T^{-1}}}{T:X\to Y \text{ is a linear isomorphism}}  \right).$$ 
\defi
Given a family $\mc F$ of finite-dimensional Banach spaces, let $[\mc F]$ be the class of all separable Banach spaces $E$ such that $\mc F\con \mr{Age}(E)$, and such that  every finite-dimensional subspace of $E$ is the $d_{\mr{BM}}$-limit of a sequence of subspaces of elements of $\mc F$. 
\fdefi	
For example, the spaces $c_0$,  $C[0,1]$ or the Gurarij space are in the class $[\{\ell_\infty^n\}_n]$, where each $\ell_\infty^n$ is the (real or complex) vector space $\mbb F^n$ endowed with the sup norm, $\nrm{(a_1,\dots,a_n)}_\infty:=\max_{i} |a_i|$.  In general $[\{\ell_\infty^n\}_n]$ is the class of separable  Lindenstrauss spaces.   In the next, by a {\em modulus of stability} we mean a function $\varpi:[0,\infty[\to [0,\infty[$ that is increasing and continuous at zero with value zero.

\defi[Fraïssé properties]
Let $E$ be a separable Banach space, and let $\mc F$ be a family of finite-dimensional spaces. 
\begin{enumerate}[a)]
\item  $E$
satisfies the stable homogeneity property with respect to $\mathcal{F}$ with modulus of stability $%
\varpi $ if $\Emb(X,E)$ is nonempty for every $X\in \mathcal{F}$ and if for every $X\in \mathcal{F}$, $\delta \geq 0$, $\varepsilon >0$,  the canonical action $\iso(E)\acts \Emb_\de(X,E)$ is $(\varpi(\de)+\vep)$-transitive.

\item  $E$ is a {\em stable Fraïssé} Banach space with modulus of stability $\varpi$ when $E$ satisfies the stable homogeneity property with respect to $\age(E)$. 
\item $\mc F$   satisfies the {\em stable amalgamation property (SAP)} with modulus $\varpi$ when for every   $X,Y,Z\in \mc F$, $\vep>0$, $\de\ge 0$,  $\ga\in \Emb_\de(X,Y)$ and $\eta\in \Emb_\de(X,Z)$  there is $V\in \mc F$, $I\in \Emb(Y,V)$ and $J\in \Emb(Z,V)$ such that $\nrm{I\circ \ga-J\circ \eta}\le \varpi(\de)+\vep$.
  \item $\mc F$  is a {\em stable Fraïssé class} when $\mc F$ satisfies the (SAP) and the {\em joint embedding property (JEP)}, that is, for every $X,Y\in \mc F$ there is $Z\in \mc F$ such that $\Emb(X,Z),\Emb(Y,Z)$ are nonempty. 
\end{enumerate}  
\fdefi
It is easy to see that if $\mc F$ satisfies the (SAP) and it has a least element with respect to inclusion, then $\mc F$ has the (JEP). Using the fact that $\{\ell_\infty^n\}_n$ is a stable Fraïssé class with modulus $\varpi(\de)=\de$ (see Proposition \ref{li23jior3ji3r}),   it is  proved in \cite[\S\S 6.1]{lupini_fraisse_2016}   that the Gurarij space is a stable Fraïssé Banach space with modulus $\varpi(\de)=\de$.  In fact, this approximate ultrahomogeneity is a direct consequence of the fact that the Gurarij space is the ``generic'' direct limit of the class of all finite-dimensional Banach spaces, an instance of the following Fraïssé correspondence for Banach spaces (see for instance \cite[%
Subsection 2.6]{lupini_fraisse_2016}).

\begin{proposition}
\label{Proposition:exists}Suppose that $\mathcal{F}$ is a class of finite-dimensional Banach spaces,  and $E$ is a separable Banach space.  Then,
\begin{enumerate}[a)]
\item If $E$ is a Fraïssé   space with modulus $\varpi$, then $\age(E)$ is a stable Fraïssé class with modulus $\varpi$.
\item    If $\mc F$ is
a stable Fra\"{\i}ss\'{e} class with modulus $\varpi$, then
there is a unique  separable $E\in [\mc F]$ that satisfies the stable homogeneity property with respect to $\mc F$ with modulus $\varpi$. This space is called the {\em Fra\"{\i}ss\'{e} limit of $\mc F$} and denoted by $\flim\mathcal{F}$. \qed
  
\end{enumerate}  
%
%
%
%
%
%
%
\end{proposition}
Consequently, the class of all finite-dimensional Banach spaces is stable with modulus $\de$. The classes  $\{\ell_p^n\}_n$ for $1\le p\le \infty$ are also stable:  The case $p=\infty$ is rather easy to prove (see  Proposition \ref{li23jior3ji3r}), as well as the  case $p=2$, where one  can use the polar decomposition; for $1<p<\infty$, $p\neq 2$, one can use a result  of G. Schechtman in \cite{Schechtman_1979} of approximation of $\de$-embeddings by isometric embeddings. Also, it is proved in \cite{ferenczi_amalgamation_2019} that  for $p\ne 4,6,8,\dots$, the class $\age(L_p(0,1))$ has a weaker form of stable approximate ultrahomogeneity, namely one that may depend on the dimension.   Several other examples of Fraïssé classes of structures in functional analysis are studied in  \cite{lupini_fraisse_2016}.

As we mentioned before, we will see that for an approximately ultrahomogeneous space $E$, the (ARP) of its age   is equivalent to  the extreme amenability of the isometry group of $E$. Furthermore, when $E=[\mc F]$ for some stable Fraïssé class $\mc F$,  a stronger form of the (ARP) of $\mc F$ is also equivalent to the   extreme amenability of the isometry group of $E$.

\begin{definition}
\label{Definition:ARP}  A class  $\mathcal{F}$ of   finite-dimensional Banach spaces 
  satisfies the \emph{stable approximate Ramsey property (SRP)} with
stability modulus $\varpi $ if for any $X,Y\in \mathcal{F}$, $\varepsilon >0$%
, $\delta \geq 0$ there exists $Z\in \mathcal{F}$ such that every 1-Lipschitz mapping
 $c:\mathrm{\ \mathrm{Emb}}_{\delta }(X,Z) \to [0, 2(1+\de)]$  $%
(\varpi (\delta )+\vep)$-stabilizes on $\gamma \circ \mathrm{Emb}%
_{\delta }(X,Y)$ for some $\gamma \in \mathrm{Emb}(Y,Z)$.

The    \emph{compact }(SRP) and  \emph{discrete }(SRP) are defined as the
(ARP),  by replacing continuous colorings with compact and  
finite colorings, respectively.
%
\end{definition}

%

%


\begin{theorem}[KPT correspondence for Banach spaces]\label{liwjr3iwejirwe} Let $E$ be an approximately ultrahomogeneous Banach  space. Then the following are equivalent:
\begin{enumerate}[1)]
\item $\iso(E)$ is extremely amenable.
\item $\age(E)$  satisfies the approximate Ramsey property.
\item For every $X,Y\in \age(E)$, every $\vep>0$  and every continuous coloring $c$ of $\Emb(X,E)$ there is some $g\in \iso(E)$ such that $\osc(c\rest g\circ \Emb(X,Y))\le \vep$.
\end{enumerate}
If in addition $\mc F$ is  a family that satisfies the stable amalgamation property such that $E\in [\mc F]$ and  $\mc F\preceq \age(E)$, that is, every space in $\mc F$ can be isometrically embedded into $E$, then   1), 2), 3) above are also  equivalent to 
\begin{enumerate}[1)]\addtocounter{enumi}{3}
\item   $\mc F$ satisfies the (SRP).
  \end{enumerate}

\end{theorem}

The equivalence of {\it1)} and {\it 2)}   is a particular instance a   more general characterization of extreme amenability in terms of an approximate Ramsey property when Banach spaces are  regarded as {\em metric structures}  \cite{ben_yaacov_model_2008} as in \cite[%
Appendix B]{goldbring_kirchbergs_2015} or \cite[\S 8.1]{lupini_fraisse_2016}.  Before we give a proof of the correspondence, we compare these Ramsey properties.

\begin{proposition}
\label{SRP=cSRP} Suppose that $\mathcal{F}$ is a class of finite-dimensional spaces with the {\em joint embedding embedding property}, that is, for every $X,Y\in \mc F$ there is $Z\in \mc F$ such that $\Emb(X,Z),\Emb(Y,Z)\neq \buit$. 
Then the following
assertions are equivalent:

\begin{enumerate}[1)]
\item $\mathcal{F}$ satisfies the (ARP) and the (SAP) with modulus $\varpi$.

\item $\mathcal{F}$ satisfies the (SRP) with modulus $\varpi $.

\item $\mathcal{F}$ satisfies the discrete (SRP) with modulus $\varpi $.

\item $\mathcal{F}$ satisfies the compact (SRP) with modulus $\varpi $.
\end{enumerate}
\end{proposition}

\begin{proof}
Trivially, the compact (SRP) with modulus $\varpi $ implies the discrete (SRP) with modulus 
$\varpi $, and a  simple modification of the proof of the Proposition \ref{ARP=DARP} gives
that the discrete (SRP) with modulus 
$\varpi $ implies the   (SRP) with modulus $\varpi$.   
Trivially, the   (SRP) with modulus $\varpi $ implies the  
(ARP). 
In addition, we have the following
\clam 
If $\mc F$ has the   (SRP) with modulus $\varpi$ then $\mc F$ has the (SAP) with  modulus $\varpi$.
\fclam	
\prucl	
Fix $X,Y,Z\in \mc F$, $\vep>0$, $\de\ge 0$ and $\ga\in \Emb_\de(X,Y)$ and $\eta\in \Emb_\de(X,Z)$. Find  $V\in \mc F$ such that $\Emb(X,V),\Emb(Y,V)$ and $\Emb(Z,V)$ are non empty. Find $W\in \mc F$ witnessing the (SRP) for initial parameters $X,V\in \mc F$, $\vep,\de$.     We claim that $W$ also witnesses the (SAP) for $\ga, \eta$, $\vep$ and $\de$.  Choose $\theta_Y\in \Emb(Y,V)$ and $\theta_Z\in \Emb(Z,V)$.  Let $I\in \Emb(V,W)$ be such that $\osc(c\rest I\circ \Emb_\de(X,V))\le \varpi(\de)+\vep$, where   $c:\Emb_\de(X,W)\to [0, 2+\de]$ is defined by  $c(\xi):=d(\xi,\Emb(V,W)\circ \theta_Y \circ \ga)$. Since $c(I\circ \theta_Y\circ \ga)=0$, $d(I\circ \theta_Z\circ \eta, \Emb(V,W)\circ \theta_Y\circ \ga)\le \varpi(\de)+\vep$, so there is some $J\in \Emb(V,W)$ such that $\nrm{I\circ \theta_Z\circ \eta - J \circ \theta_Y\circ \ga}\le \varpi(\de)+\vep$, as desired. 
\fprucl
Suppose   that $\mc F$ has the (ARP) and the (SAP) with modulus $\varpi$, and we prove that $\mc F$ has the compact (SRP). The next  claim is not difficult to prove.
\clam	\label{loikewijriowjrejwre}
$\mc F$ has the (SAP) with modulus $\varpi$ if and only if for every $X,Y\in \mc F$, $\de\ge 0$ and $\vep>0$ 
there exist $Z\in  \mathcal{F}$ and $I\in \mathrm{Emb}(Y,Z)$ such that
for every $\phi ,\psi \in \mathrm{Emb}_{\delta }(X,Y)$ there is $J\in 
\mathrm{Emb}(Y,Z)$ such that $\Vert I\circ \phi -J\circ \psi \Vert \leq \varpi (\delta )+\vep$. \qed

\fclam

 Fix $X,Y\in 
\mathcal{F}$, $\delta ,\varepsilon >0$ and a compact metric space $K$. We
use the previous claim to find $Y_{0}\in \mathcal{F}$
such that for every $\phi ,\psi \in \mathrm{Emb}_{\delta }(X,Y)$
there are $i,j\in \mathrm{Emb}(Y,Y_{0})$ such that $\Vert
i\circ \phi -j\circ \psi \Vert _{\mathrm{cb}}\leq\varpi
(\delta ) + \varepsilon$.
We consider the space $\mc L:=\mathrm{Lip}(\mathrm{Emb}_{\delta }(X,Y),K)$ of $1$-Lipschitz maps from $\mathrm{Emb}_{\delta }(X,Y)$ to $K$ as a compact metric space, endowed with the metric $d\left(
f,g\right) =\sup \left\{ d_{K}\left( f\left( \phi \right) ,g\left( \phi
\right) \right) :\phi \in \mathrm{Emb}_{\delta }(X,Y)\right\} $%
. By Proposition \ref{ARP=DARP}, $\mathcal{F}$ satisfies the compact (ARP).
Thus there exists some $Z\in \mathcal{F}$ such that every $\mc L$-coloring of $\mathrm{Emb}(Y,Z)$ $\varepsilon $-stabilizes on $\gamma \circ \mathrm{Emb}(Y,Y_{0})$ for some $\gamma \in \mathrm{Emb}(Y_{0},Z) $. We claim that $Z$ works, so let $c:\mathrm{Emb}_{\delta }(X,Z)\rightarrow K$ be $1$-Lipschitz. We can   define a coloring $\widehat{%
c}:\mathrm{Emb}(Y,Z)\to 
\mc L$ by setting, for $\xi \in \mathrm{%
Emb}(Y,Z)$, $\widehat{c}(\xi):\mathrm{Emb}_{\delta }(X,Y)\rightarrow K$, $\phi \mapsto c\left( \xi \circ \phi
\right) $. By the choice of $Z$, there exists $\bar{\gamma}\in \mathrm{Emb}(Y_{0},Z)$ be such that $\widehat{c}$ $\varepsilon $-stabilizes
on $\bar{\gamma}\circ \mathrm{Emb}(Y,Y_{0})$. Choose an
arbitrary $\varrho \in \mathrm{Emb}(Y,Y_{0})$. We claim that $c$
$\left( \varpi \left( \delta \right)+3\varepsilon \right) $-stabilizes on $%
\bar{\gamma}\circ \varrho \circ \mathrm{Emb}_{\delta }(X,Y)$.
Let $\phi ,\psi \in \mathrm{Emb}_{\delta }(X,Y)$. By the choice
of $Y_{0}$ there are $i,j\in \mathrm{Emb}(Y,Y_{0})$ such that $%
\nrm{ i\circ \phi -j\circ \psi } \leq \varpi (\delta )+\varepsilon $. Since $d_{\mc L}(\widehat{c}(\bar{\gamma}\circ \varrho ),%
\widehat{c}(\bar{\gamma}\circ i))\leq \varepsilon $ and $d_{\mc L}(\widehat{c}(%
\bar{\gamma}\circ \varrho ),\widehat{c}(\bar{\gamma}\circ j))\leq
\varepsilon $, it follows that $d_{K}(c(\bar{\gamma}\circ \varrho \circ \phi
),c(\bar{\gamma}\circ i\circ \phi ))\leq \varepsilon $, $d_{K}(c(\bar{\gamma}%
\circ \varrho \circ \psi ),c(\bar{\gamma}\circ j\circ \psi ))\leq
\varepsilon $. Furthermore, from $\nrm{ i\circ \phi -j\circ
\psi } \leq \varpi (\delta )+\varepsilon $ and the fact that $c$ is $1$%
-Lipschitz we deduce that  $d_{K}(c(\bar{%
\gamma}\circ \varrho \circ \phi ),c(\bar{\gamma}\circ \varrho \circ \psi
))\leq \varpi (\delta )+3\varepsilon $.
\end{proof}

\prue[Proof of Theorem \ref{liwjr3iwejirwe}] 
Corollary \ref{i3ij3ior3} gives that {\it 1)} implies {\it 2)}.  Let us prove the reverse. Suppose that $\age(E)$ has the (ARP). Let $(X_n)_n$ be an increasing sequence of finite-dimensional subspaces of $E$ whose union is dense in $E$, and let $d$ be the   metric on $\iso(E)$ defined by $d(g,h):=\sum_n 2^{-n-1} \nrm{g\rest X_n-h\rest X_n}$. Observe that $d$ is a left-invariant compatible metric on $\iso(E)$.  In order to prove the extreme amenability of $\iso(E)$ we  prove {\it 2)} in Proposition \ref{factor_orbitspace0} for the distance $d$, that is, that the left translation of $\iso(E)$ on $(\iso(E),d)$ is finitely oscillation stable. We fix a 1-Lipschitz mapping $c:\iso(E)\to [0,1]$,  a finite subset $F\con \iso(E)$ and $\vep>0$. Let $n$ be such that $2^{n-2} \vep\ge 1$ and let  $Y\con E$ be a finite-dimensional subspace of $E$ such that $X_n\cup \bigcup_{g\in F} g(X_n)\con Y$. Let $Y\con Z\con E$ be a finite-dimensional space witnessing the   (ARP) of $\mr{Age}(E)$ for the parameters $X_n, Y$ and $\vep/8$.   For each $\ga\in \Emb(X_n,Z)$ we choose $g_\ga\in \iso(E)$ such that $\nrm{\ga-g_\ga\rest X_n}\le \vep/8$, and now we define the (discrete) coloring $\widehat c:\Emb(X_n,Z)\to \{1,\dots,2^{n+1}\}$, by   $\widehat c(\ga)= j$ when  $j$ is the first integer $i$ such that $ c(g_\ga)\in J_i $, where $J_i:=[(i-1)/2^{n+1}, i/2^{n+1}]$.    There is some $\xi\in \Emb(Y,Z)$  and $ j\in \{1,\dots,2^{n+1}\}$ such that $\xi\circ \Emb(X_n,Y)\con (\widehat c^{-1}(j))_{\vep/8}$.  Choose $h\in \iso(E)$ such that $\nrm{\xi-h\rest Y}\le \vep/16$.  We claim that $\osc (c \rest h\cdot F)\le \vep$:  given  $g_0,g_1\in F$, there are $f_0,{f_1}\in \iso(E)$ such that $(j-1)/2^{n+1}\le c({f_0}), c({f_1})\le j/2^{n+1}$ and such that $\nrm{\xi\circ g_0\rest X_n - f_0\rest X_n},\nrm{\xi\circ g_1\rest X_n - f_1\rest X_n} \le \vep/4 $.  Hence $d(h\circ g_0,f_0),d(h\circ g_1,f_1)\le 7\vep/16$, and since $c$ is 1-Lipschitz, 
$$|c(h\circ g_0)- c(h\circ g_1)|\le d(h\circ g_0, f_0)+ |c(f_0)-c(f_1)| + d(h\circ g_1, f_1) \le \vep.$$
{\it 2)} and {\it 3)} are equivalent by Claim \ref{loikewijriowjrejwre}, under the hypothesis that $E$ is approximately ultrahomogeneous. 

 Suppose that   $\mc F$  is a family such that $\mc F\preceq \age(E)$, $E\in [\mc F]$  and suppose that it satisfies the   stable  amalgamation property.  We suppose first that {\it 2)} holds, that is,   $\age(E)$ has the (ARP), and we prove {\it 4)}: By Proposition   \ref{SRP=cSRP}  and Proposition 
    \ref{ARP=DARP},
it suffices to show that $\mathcal{F}$ satisfies the discrete (ARP). Fix $X,Y$ in $\mathcal{F}$, $r\in \mathbb{N}$, and $\varepsilon >0$. We know
by the hypothesis and Proposition \ref{ARP=DARP} that $\age(E)$ satisfies the discrete (ARP). Thus, we can find  $
Z_{0}\in \age(E)$ containing a copy of $Y$ and such that
every $r$-coloring of $\Emb(X,Z_{0})$ has an $\varepsilon $%
-monochromatic subset of the form $\gamma \circ \Emb(X,Y)$ for
some $\gamma \in \Emb(Y,Z_{0})$. Let $\de\le \vep$ be such that $\varpi(\de)<\vep$.  Let $Z_{1}\in \mc F$ for
which there exists an $\de $-embedding $\theta :Z_{0}\rightarrow
Z_{1}$. By the (SAP) of $ \mathcal{F}$ we can find $Z\in  \mathcal{F}$ and $%
I\in \Emb(Z_{1},Z)$ such that for every $\phi \in \Emb_{\de } (X,Z_{1})$ there is $\bar{\phi}\in \Emb(X,Z)$ such
that $\nrm{I\circ \phi -\bar{\phi}}\leq \varepsilon $, and
similarly for the elements of $\mathrm{Emb}_{\de }(Y,Z_{1})$. We claim that $Z$ witnesses the discrete (ARP) for the given $%
X,Y,\varepsilon ,r$. Fix a coloring $c:\Emb(X,Z)\rightarrow r$.
Define $b:\Emb(X,Z_{0})\rightarrow r$, by choosing for each $%
\phi \in \Emb(X,Z_{0})$ an element $\bar{\phi}\in \Emb(X,Z)$ such that $\nrm{I\circ \theta \circ \phi -\bar{\phi}}\leq \varepsilon $ and declaring $b(\phi ):=c(\bar{\phi})$. By the choice
of $Z_{0}$ from the discrete (ARP) of $\age(E)
$, there exist $\alpha \in \Emb(Y,Z_{0})$ and $j<r$ such that $%
\alpha \circ \Emb(X,Y)\subseteq (b^{-1}(j))_{\varepsilon }$.
Let $\bar{\alpha}\in \Emb(Y,Z)$ be such that $\nrm{
I\circ \theta \circ \alpha -\bar{\alpha}}\leq \varepsilon $. We claim that 
$\bar{\alpha}\circ \Emb(X,Y)\subseteq (c^{-1}(j))_{3\varepsilon
} $: Fix $\phi \in \Emb(X,Y)$. Let $\sigma \in \Emb(X,Z_{0})$ be such that $b(\sigma )=j$ and $d_{\mathrm{cb}}(\alpha \circ
\phi ,\sigma )\leq \varepsilon $. By definition, we can find $\bar{\sigma}%
\in \Emb(X,Z)$ such that $c(\bar{\sigma})=j$ and such that $
\nrm{I\circ \theta \circ \sigma -\bar{\sigma}}\leq \varepsilon $.
Then, 
\begin{equation*}
\nrm{\bar{\alpha}\circ \phi -\bar{\sigma}}\leq \nrm{
\bar{\alpha}\circ \phi -I\circ \theta \circ \alpha \circ \phi }+ \nrm{I\circ \theta \circ \alpha \circ \phi -I\circ \theta \circ \sigma }+\nrm{ I\circ \theta \circ \sigma -\bar{\sigma}}\leq 3\varepsilon .
\end{equation*}
Finally, suppose that {\it 4)} holds, that is, $\mc F$ has the stable approximate Ramsey property with modulus $\varpi$, and let us prove {\it 3)}: Let $\mc F_E$ be the collection of subspaces of $E$ that are isometric to some element of $\mc F$. Obviously, $\mc F_E$ also has the (ARP).  Fix $X,Y\in \age(E)$ and $\vep>0$.  We consider $0<\de\le 1$ such that $\varpi(\de)<\vep$  and $X_0\in \mc F_E$ such that there is $\theta\in \Emb_\de(X,X_0)$. Choose also a finite $\vep$-dense subset $D$ of $\Emb(X,Y)$, and for each $\ga\in D$ some $g_\ga\in \iso(E)$ such that $\nrm{g_\ga\rest X- \ga}\le \vep$. Let now $X_1\in \mc F_E$ be such that for every $\ga\in D$ there is $\eta\in \Emb_\de(X_0,X_1)$ such that $\nrm{g_\ga\rest X_0-\eta}\le \vep$. Let $Y_0\in \mc F_E$ and $\iota\in \Emb(X_1,Y_0)$ be such that  $\iota\circ \Emb_\de(X_0,X_1)\con (\Emb(X_0, Y_0))_\vep$. We use now the (ARP) of $\mc F_E$ when applied to $X_0,Y_0$ and $\vep/2$ to find  the corresponding $Z\in \mc F_E$. Fix a continuous coloring $c:\Emb(X,E)\to [0,1]$, and we define a continuous coloring $e:\Emb(X_0,Z)\to [0,1]$ as follows: Fix a non-principal ultrafilter $\mc U$ on $\N$. Given $\ga\in \Emb(X_0,Z)$ we  choose a sequence  $(g_n)_n$ in  $\iso(E)$ such that $\nrm{g_n \rest X_0 - \ga}\le 1/2^{n}$. Let $e(\ga):=\mc U-\lim (c(g_n\rest X))$. It is easy to see that $e$ is $(1+\de)$-Lipschitz. There is some $\ga\in \Emb(Y_0,Z)$ such that $\osc((e/(1+\de))\rest \ga\circ \Emb_\de(X_0,Y_0))\le \vep/2 $, hence $\osc(e\rest \ga\circ \Emb_\de(X_0,Y_0))\le \vep $. Let $h\in \iso(E)$ be such that $\nrm{h\rest X_1-\ga\circ \iota}\le  \vep$. We claim that $\osc(h  \circ \Emb(X,Y))\le 23\vep$:   Fix $\ga_0,\ga_1\in D$. Then, $\nrm{g_{\ga_j} \rest X- \ga_j}\le \vep$ for $j=0,1$. Choose $\eta_0,\eta_1\in \Emb_\de(X_0,X_1)$  such that $\nrm{g_{\ga_j}\rest X_0 -\eta_j}\le \vep$ for $j=0,1$. Choose $\xi_0,\xi_1\in \Emb(X_0,Y_0)$ such that $\nrm{\xi_j - \iota \circ \eta_j}\le \vep$, $j=0,1$.  Then, $|e(\ga \circ \xi_0)-e(\ga \circ \xi_1)|\le \vep$. Choose $f_0,f_1\in \iso(E)$ such that $|e(\ga\circ \xi_j)- c(f_j \rest X)|\le \vep$ and such that $\nrm{f_j\rest X_0- \ga\circ \xi_j}\le \vep$ for $j=0,1$. Then,
\begin{align*}
|c(h\circ \ga_0)-c(h\circ \ga_1)|\le & \nrm{h\circ \ga_0-f_0\rest X}+ \nrm{h\circ \ga_1-f_1\rest X}+ 3\vep \le \\
\le & \nrm{f_0\rest X- h \circ g_{\ga_0}\rest X}+ \nrm{f_1\rest X- h \circ g_{\ga_1}\rest X}+ 5\vep  \le \\
\le & (1+\de) ( \nrm{f_0\rest X_0- h \circ g_{\ga_0}\rest X_0}+ \nrm{f_1\rest X_0- h \circ g_{\ga_1}\rest X_0 })+ 5\vep\le \\
 \le & (1+\de) ( \nrm{f_0\rest X_0- \ga \circ \xi_0}+ \nrm{f_1\rest X_0-   \ga \circ \xi_1 }+ 6\vep )+ 5\vep \le 21 \vep.   \qedhere
\end{align*}
Since $D$ is $\vep$-dense, it follows from the previous inequality that $\osc(h  \circ \Emb(X,Y))\le 23\vep$.
\let\qed\relax

\fprue

\subsection{The approximate Ramsey property of $\{\ell_\infty^n\}_n$}\label{oi23349283}
The content of this part is the proof of the approximate Ramsey property of the   family $\{\ell_\infty^n\}_n$, and consequently of the class of all finite-dimensional Banach spaces,  over $\mbb F=\R,\C$. Our proof is based on the  \emph{Dual Ramsey Theorem }(DRT) of R. L.  Graham and B. L. 
Rothschild \cite{graham_ramseys_1971}. For convenience, we present its
formulation in terms of rigid surjections between finite linear orderings.
Given two linear orderings $(R,<_{R})$ and $(S,<_{S})$, a surjective map $%
f:R\rightarrow S$ is called a \emph{rigid surjection} when $\min_R
f^{-1}(s_{0})<\min_R f^{-1}(s_{1})$ for every $s_{0},s_{1}\in S$ such that $%
s_{0}<_{S}s_{1}$. Let $\mathrm{Epi}(R,S)$ be the collection of rigid
surjections from $R$ to $S$.
 
\begin{theorem}[(DRT)  \cite{graham_ramseys_1971}]
\label{Theorem:DLT-rigid}For every finite linear orderings $R$ and $S$ such
that $|R|<|S|$ and every $r\in \mathbb{N}$ there exists an integer $n>|S|$
such that, considering $n $ naturally ordered, every $r$-coloring of $%
\mathrm{Epi}(n,R)$ has a monochromatic set of the form $\mathrm{Epi}%
(S,R)\circ \gamma =\{{\sigma \circ \gamma }\,:\,{\sigma \in \mathrm{Epi}(S,R)%
}\}$ for some $\gamma \in \mathrm{Epi}(n,S)$.
\end{theorem}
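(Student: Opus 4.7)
The plan is to identify rigid surjections with ordered partitions and then invoke the original Dual Ramsey Theorem of Graham--Rothschild in its partition formulation. Set $k:=|R|$ and $\ell :=|S|$, so the hypothesis $|R|<|S|$ becomes $k<\ell $. Write the elements of $R$ in order as $r_{1}<_{R}\cdots <_{R}r_{k}$. Each rigid surjection $f:[n]\to R$ produces the tuple of nonempty sets $B_{i}:=f^{-1}(r_{i})$, and the rigidity condition $\min f^{-1}(s_{0})<\min f^{-1}(s_{1})$ for $s_{0}<s_{1}$ is precisely the condition $\min B_{1}<\min B_{2}<\cdots <\min B_{k}$. Consequently $\mathrm{Epi}([n],R)$ is in canonical bijection with the set of (unordered) partitions of $[n]$ into $k$ nonempty blocks, via the convention of ordering blocks by their minima.

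Under the same bijection, a $\gamma \in \mathrm{Epi}([n],S)$ corresponds to a partition $(C_{1},\ldots ,C_{\ell })$ of $[n]$ into $\ell $ nonempty blocks (again ordered by minima). For any $\sigma \in \mathrm{Epi}(S,R)$, the composition $\sigma \circ \gamma $ realizes the partition whose $i$-th block is $\bigcup_{\sigma (s)=r_{i}}\gamma ^{-1}(s)$. As $\sigma $ varies over $\mathrm{Epi}(S,R)$, these partitions range exactly over all $k$-block coarsenings of $(C_{1},\ldots ,C_{\ell })$: a coarsening of an ordered $\ell $-block partition into $k$ nonempty unions determines, and is determined by, a rigid surjection $S\to R$ once we reorder by minima. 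Hence $\mathrm{Epi}(S,R)\circ \gamma $ is precisely the family of $k$-block coarsenings of the $\ell $-block partition induced by $\gamma $.

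With these translations, the theorem is equivalent to the classical statement: for all $k<\ell $ and $r\in \mathbb{N}$, there exists $n$ such that every $r$-coloring of the $k$-block partitions of $[n]$ admits a partition of $[n]$ into $\ell $ blocks whose family of $k$-coarsenings is monochromatic. This is the Dual Ramsey Theorem of Graham and Rothschild \cite{graham_ramseys_1971}, and I would invoke it directly to conclude.

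Were I to reprove this classical input from scratch, I would use the Hales--Jewett theorem: ordered partitions of $[n]$ into $\ell $ blocks can be encoded as the words in $[\ell ]^{n}$ whose symbols first appear in the natural order, and an iterated application of Hales--Jewett (with alphabet $\ell $ and $r$ colors, performed once for each of the finitely many rigid surjections $S\to R$) lets one simultaneously stabilize the color on every coarsening branch. The genuine difficulty, and the reason Graham--Rothschild phrased the result in the parameter-word framework, is the simultaneous control of all possible $k$-coarsenings of the final $\ell $-block partition; handling them one at a time needlessly amplifies the bounds, while the parameter-word (or partite) induction treats them uniformly. Since DRT is used here as an external pigeonhole input, only the translation above is required.
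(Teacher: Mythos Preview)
Your proposal is correct and matches the paper's treatment: the paper does not prove Theorem~\ref{Theorem:DLT-rigid} from scratch either, but quotes it as the Dual Ramsey Theorem of Graham--Rothschild \cite{graham_ramseys_1971}, and in Section~\ref{DRT_Matrices} presents the rigid-surjection and partition formulations side by side as equivalent without spelling out the bijection. Your writeup actually supplies more detail on that bijection (rigid surjections $\leftrightarrow$ partitions with blocks ordered by minima, and $\mathrm{Epi}(S,R)\circ\gamma\leftrightarrow$ $k$-block coarsenings) than the paper does.
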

 
We prove the following. 
 \begin{theorem}\label{io3iorio32io33gtfg} The class $\{\ell_\infty^n\}_{n\in \N}$ satisfies the (SRP) with modulus $\varpi(\de)=\de$.  
 
\end{theorem}
It follows from the KPT correspondence in Theorem \ref{liwjr3iwejirwe} and Proposition \ref{SRP=cSRP} the announced result and Corollary  \ref{knjreiureiu87548954906}. 

\begin{reptheorem}{zcxvbnjhgfd}
 The group of isometries of the Gurarij space endowed with the strong operator topology is extremely amenable. \qed
\end{reptheorem}



\cor\label{knjreiureiu87548954906}
The class of finite-dimensional Banach spaces satisfies the  (SRP) with modulus $\varpi(\de)=\de$.  
\fcor	
We will give a direct proof of the (ARP) of the class of all finite-dimensional Banach spaces later. Coming back to Theorem \ref{io3iorio32io33gtfg}, by means of Proposition 	\ref{SRP=cSRP} we need to prove that $\{\ell_\infty^n\}$ satisfies the stable amalgamation property with modulus $\de$, and that it has the (ARP).     Observe that  a linear map $\ga:\ell_\infty^d\to \ell_\infty^n$ is a $\de$-isometric embedding if and only if its dual operator $\ga^*: \ell_1^n \to \ell_1^d$
satisfies that $\ga^*(\ball(\ell_1^n))\con \ball(\ell_1^d)\con\ga^*( (1+\de)\ball(\ell_1^n))$.  When $\de=1$ such an operator $\sig: \ell_1^n\to \ell_1^d$ satisfying that  $\sig(\ball(\ell_1^n))=\ball(\ell_1^d)$ is called a {\em quotient map}.  A simple argument using extreme points shows that this is equivalent to saying that $\{u_j\}_{j<d}\con S^1(\mbb F) \cdot \{\sig(u_j)\}_{j<n}$, where $S^1(\mbb F)= \conj{a\in \mbb F}{|a|=1}$, and where $u_j$ is the $j^\mr{th}$ unit vector whose only non-zero coordinate has value 1 and it is on the $j^{\mr{th}}$ position.   Let   $\mr{Quo}(\ell_1^n,\ell_1^d)$ be the metric space of quotients. Finally, observe that the dual functor $\ga\in \Emb(\ell_\infty^d,\ell_\infty^n)\mapsto \mr{Quo}(\ell_1^n,\ell_1^d)$ is an isometric bijection.  This means that the (ARP) of $\{\ell_\infty^n\}_n$ is equivalent to the assertion of the following lemma.
\begin{lemma}\label{iojriowjrwe8855}
For every $d,m\in \N$ and $\vep>0$ there is some $n\in \N$ such that every continuous coloring of $\mr{Quo}(\ell_1^n,\ell_1^d)$ $\vep$-stabilizes on $\mr{Quo}(\ell_1^m,\ell_1^d) \circ \ro$ for some $\ro\in \Quo(\ell_1^n,\ell_1^m)$.
\end{lemma}		
Lemma \ref{iojriowjrwe8855} will be proved later using the Dual Ramsey Theorem.  		

%
 
\begin{proposition}\label{li23jior3ji3r}  
$\{\ell_\infty^n\}_n$ is a stable Fraïssé class with modulus $\de$.
\end{proposition}  
\begin{proof}
Suppose that $\ga:\ell_\infty^d\to \ell_\infty^m$, $\eta:\ell_\infty^d\to \ell_\infty^n$ are  $\de$-isometric embeddings. This means that the dual operators $\ga^*:\ell_1^m\to \ell_1^d$ and $\eta^*:\ell_1^n\to \ell_1^d$ satisfy that $\ga^*(\ball(\ell_1^m))\con \ball(\ell_1^d)\con\ga^*( (1+\de)\ball(\ell_1^m))$, and $\eta^*(\ball(\ell_1^n))\con \ball(\ell_1^d)\con\eta^*( (1+\de)\ball(\ell_1^n))$.  We define $\sig:\ell_1^{m+n}\to \ell_1^m$ and $\tau: \ell_1^{m+n}\to \ell_1^n$ as follows. For each $j<m$, choose $y_j\in \ell_1^n$ with $1\le \nrm{y_j}\le 1+\de$  such that $\eta^*(y_j)=\ga^*(u_j)$, and for $k<n$ choose $x_k\in \ell_1^m$ with $1\le \nrm{x_k}\le 1+\de$  such that $\ga^*(x_k)=\eta^*(u_k)$   Now for each $j<m$, let $\sig(u_j):=u_j$ and $\tau(u_j):= y_j/\nrm{y_j}$, and for $k<n$, let $\sig(m+k)= x_k/\nrm{x_k}$ and $\tau(u_{m+k})=u_k$. Then clearly $\sig(\ball(\ell_1^{m+n}))=\ball(\ell_1^m)$ and $\tau(\ball(\ell_1^{m+1}))=\ball(\ell_1^n)$  and $\nrm{ \ga^*\circ \sig-\eta^*\circ \tau}_{\ell_1^{m+n},\ell_1^d}\le \de$. 
\end{proof}

 Our proof of the (ARP) of $\{\ell_\infty^n\}_n$ uses crucially the Dual Ramsey theorem.  
The case $d=1$  was first proved by Gowers \cite{gowers_lipschitz_1992}, indirectly, as it follows easily via a
compactness argument from the oscillation stability of the space $c_0$.   We start by presenting a simple proof of this  result for \emph{positive} embeddings in the real case.
Given integers $k$ and $n$, let $\fin_k(n)$ be the collection of
all mappings from $n$ into $k+1=\{0,1,\dots,k-1,k\}$ such that $k$ is in its range. Let $T:\fin_k(n)\to
\fin_{k-1}(n)$ be the \emph{tetris} operation defined pointwise for $f\in \fin_k(n)$  by $T(f)(i):=
\max\{f(i)-1, 0\}$. Given disjointly supported $f_0,\dots, f_{l-1}$ in $\fin_j(n)$, the combinatorial space
$\langle f_i \rangle_{i<l}$ is the collection of all combinations $\sum_{i<l} T^{k- j_i}(f_i)$ where
$(j_i)_{i<l}\in \fin_k(l)$.

\prop[Gowers]\label{lkjdfkjsdkljfdee373476}
For every $k$, $m$ and every $r$ there is some $n$ such that  every $r$-coloring of $\fin_k(n)$ has a
monochromatic set of the form $\langle f_i\rangle_{i<m}$ for some disjointly supported sequence $(f_i)_{i<m }$
in $\fin_k(n)$.\fprop

In the next, let  $\mathbf{GR}(d,m,r)$ be the minimal $n$ so that  (DRT) holds for the parameters $d$, $m$ and $r$.
\prue[Proof of Proposition \ref{lkjdfkjsdkljfdee373476}]
 Fix $k$, $m$ and $r$. We claim that $n=\mathbf{GR}(k+1, km+1, r)$ works. Fix an $r$-coloring $c$ of  $\fin_k(n)$. We consider $k+1$, $mk+1$, and $n$ canonically ordered. For a subset $A$ of $n$, we let $\mathbbm 1_A$ be the indicator function of $A$. Let  $\Phi: \mr{Epi}(n, k+1)\to \fin_k(n)$ be defined by $\Phi(\sig):=\sum_{i\le k} i \cdot \mathbbm 1_{\sig^{-1}(i)}$. By the Ramsey property of $n$ there is some rigid surjection $\ro:n\to mk +1$ such that $c\circ \Phi$ is constant on $\mr{Epi}(mk +1, k+1) \circ \ro$ with value $\widehat r$.  For each $j<m$, let  $f_j:=\sum_{1\le i \le k}  i\cdot \mathbbm 1_{\ro^{-1}(j k +i)}$. Then  $c$ is constant on $\langle f_j\rangle_{j<m}$. To see this, given $f=\sum_{l<m} T^{k-j_l}f_l\in \langle f_j\rangle_{j<m}$ we define $\sig:m k +1 \to k+1$ by
 $\sig(0)=0$ and $\sig( l k +i):= \max\{ i-k + j_l,0 \}$ for $l<m$ and $1\le i \le k$.  Then  for $0<i_0$ one has that $\min \sig^{-1}(i_0)= k l_0 +(i_0 +k -j_{l_0})$ where $l_0=\min \conj{l<m}{ i_0\le j_l }$, so $\sig$ is a rigid surjection. It is not difficult to see that $\Phi(\sig\circ \ro)=f$, so $c(f)=\widehat{r}$.
\fprue

\prue[Proof of Lemma \ref{iojriowjrwe8855}] 
We start by the following simple fact. 
\clam\label{poikdlsfiop}
There is a finite $\vep$-dense subset $\mc D$ of $\mr{Ball}(\ell_1^d)$ containing $\{u_j\}_{j<d}$ such that for every non-zero $x\in \mr{Ball}(\ell_1^d)$ there is $y\in \mc D$ such that $\nrm{y-x}_1\le \vep$ and $\nrm{y}_1<\nrm{x}_1$.
\fclam	
\prucl
Let $D$ be a   finite $\vep/2$-dense subset   of the unit sphere of $\ell_1^d$ containing $\{u_j\}_{j<d}$, and let  $0=\la_0<\cdots <\la_{p-1}=1$  be such that  $\max_{0\le j\le p-2}\la_{j+1}-\la_j\le \vep/2$.   Then $ \mc D=\bigcup_{k<p} \la_k \cdot D$  satisfies what we want. 
\fprucl
Fix such a $\vep$-dense set $\mc D$, and  let  $\pe$ be any linear ordering of $\mathcal D$ such that if
$\nrm{x}_{1}<\nrm{y}_{1}$ then $x\pe y$.  Let $\mr{emb}(d,m)$ be the collection of all 1-1 mappings $f: d\to m$, and let $S$ be a finite $\vep$-dense subset of $S^1(\mbb F)$.   For each $(f,\theta)\in \mr{emb}(d,m)\times S^d$, let $h_{f,\theta}:\ell_1^d\to \ell_1^m$ be the linear map obtained by setting $h_{f,\theta}(u_j):=\theta_j \cdot u_{f(j)}$. Then clearly  $h_{f,\theta}$ is an isometric embedding from $\ell_1^d$ into $\ell_1^m$.
\clam\label{oi3rioiowrioweijoer}
For every $T\in \Quo(\ell_1^m,\ell_1^d)$ there is a pair $(f,\theta)\in \mr{emb}(d,m)\times S^d $ such that $\nrm{T\circ h_{f,\theta}-\id_{\ell_1^d}}_{\ell_1^d, \ell_1^d}\le \vep$.
\fclam
\prucl
For each $k<d$ choose $f(k)< m$ such that $T(u_{f(k)})=  a_k u_k$ where $|a_k|=1$. Clearly $k<d\mapsto f(k)$ is an injection from $d$ into $m$. For each $k<d$, let  $\theta_k\in S$ be such that $|1/a_k-\theta_k|\le \vep$, and let $\theta:=(\theta_k)_k$. Then 
\begin{align*}
  \nrm{T  \circ h_{f,\theta}-\id_{\ell_1^d}}_{\ell_1^d, \ell_1^d}  = & \max_{k<d} \nrm{T  \circ h_{f,\theta}(u_k)-u_k}_1=\max_{k<d} \nrm{T (\theta_k u_{f(k)})-u_k}_1=\\ 
  =& \max_{k<d} \nrm{a_k\theta_k u_{k}-u_k}_1 \le  \max_{k<d} |a_k\theta_k -1| \le \vep. \qedhere
  \end{align*}
\fprucl
Let $\De:= \mathcal D \times \mr{emb}(d,m) \times S^d$ be
ordered by the lexicographical ordering induced from $\mathcal D$ ordered by $\pe$, and $\mr{emb}(d,m)\times S^d$  ordered arbitrarily.  We claim that $n:=  \mathbf{GR}(|\mc D|, |\De|,r)$ works. Indeed, let
$c$ be an $r$-coloring of $\mr{Quo}(\ell_1^n,\ell_1^d)$. We define an injection  $\Phi: \mr{Epi}( n, \mathcal D)\to \Quo(\ell_1^n,\ell_1^d)$ by assigning to each $\sig\in \mr{Epi}(n,\mathcal D)$ the  operator $T:=\Phi(\sig):\ell_1^n\to \ell_1^d$  such that for each $\xi<n$ one has that $T (u_\xi):=\sig(\xi)$. Equivalently the $\xi^\mr{th}$-column vector  of the matrix corresponding to $\Phi(\sig)$ in the respective unit bases is $\sig(\xi)$.  It is easily verified that $T$ is always a quotient map.    It follows by the Dual Ramsey Theorem   applied to the coloring $\widehat{c}:= c\circ \Phi$
that there is $\ga_0\in \mr{Epi}(n, \De)$ such that
\begin{equation*}
\widehat{c} \text{ is constant on $\mr{Epi}(\De,\mathcal D) \circ    \ga_0 $ with value $r_0<r$.}
\end{equation*}
Let $R\in \Quo(\ell_1^n,\ell_1^m)$ be the quotient such that, for every $\xi<n$, one has that $R(u_\xi)=h_{f,\theta}(v)$, where $(v,f,\theta)=\ga_0(\xi)$.
%
%
The proof is finished once we establish the
following.
\clam
For every $T\in \Quo(\ell_1^m,\ell_1^d)$ there exists $\phi\in \mr{Epi}(\De,\mathcal D)$ such that
\begin{equation*}
\nrm{\Phi(\phi \circ \ga_0) - T\circ R}_{\ell_1^n,\ell_1^d}\le \vep.
\end{equation*}
\fclam
\prucl
Fix $T\in \Quo(\ell_1^m,\ell_1^d)$, and    use   Claim \ref{oi3rioiowrioweijoer} to  choose $(\bar f,\bar \theta)\in \mr{emb}(d,m)\times S$ such that $\nrm{T\circ h_{\bar f,\bar \theta}- \id_{\ell_1^d}}_{\ell_1^d,\ell_1^d}\le \vep$.  Now we define   $\phi: \De \to \mc D$
as follows. Fix $(v,f,\theta)\in \De$. 
\begin{enumerate}[(i)]
\item  If $T (h_{f,\theta}(v))=0$, then we set $\phi(v,f,\theta):=0$.
\item Suppose that  $T(  h_{f,\theta}(v))\neq 0$; if $(f,\theta)=(\bar f,\bar \theta)$, then we set $\phi(v,f,\theta):=v$; otherwise,  we set $\phi(v,f,\theta):=w$ where  $w\in \mc D$ is such that  $\nrm{T(h_{f,\theta}(v))-w}_{\ell_1}\le \vep$, and such that $\nrm{w}_1<\nrm{T(h_{f,\theta}(v))}_1$.

\end{enumerate}  
We see   that $\phi:\De\to D$ is a rigid surjection. First, $\min \phi^{-1}(0)=(0,f_0,\theta_0)$, where $(f_0,\theta_0)$ is the minimum of $\emb(d,m)\times S^d$. Now suppose that  $v\in \mc D$ is a non zero vector.    We prove that 
$\min \phi^{-1}(v)=(v,\bar f,\bar \theta)$: Suppose that $\phi(u,f,\theta)=v$, and $(f,\theta)\neq (\bar f,\bar \theta)$. By the definition of $\phi$,  $\nrm{v}_1<\nrm{T(h_{f,\theta}(u))}_1\le \nrm{T}_{\ell_1^m, \ell_1^d}\cdot \nrm{h_{f,\theta}(u)}_1\le \nrm{u}_1$, because $T$ is a contraction and $h_{f,\theta}$ is an isometric embedding.  Hence, $v\pe u$, and since in $\De$ we are considering the lexicographic ordering, $(v,\bar f,\bar \theta)<(u, f,\theta)$. Since obviously $\phi(v,\bar f,\bar \theta)=v$, we obtain that $\min \phi^{-1}(v)=(v,\bar f,\bar \theta)$. Hence, if $0\neq v < w$, then $\min \phi^{-1}(v)<\min \phi^{-1}(w)$. 

Finally, we estimate  $  \nrm{\Phi(\phi\circ \ga_0)- T\circ R}_{\ell_1^n,\ell_1^d}= \max_{\xi<n} \nrm{\Phi(\phi\circ \ga_0)(u_\xi)- T(R(u_\xi))}_{\ell_1^d}$.   Fix $\xi<n$, and suppose that $\ga_0(\xi)=(v,f,\theta)$.  Then by definition, $(\Phi(\phi\circ \ga_0))(u_\xi)= \phi(\ga_0(\xi))$, and $T(R(u_\xi))= T(h_{f,\theta}(v))$.  Now 
\begin{enumerate}[(a)]
\item if $T(h_{f,\theta}(v))=0$, then $0=\phi(v,f,\theta)=\Phi(\phi\circ \ga_0)(u_\xi)$ and $0= T(h_{f,\theta}(v))=T(R(u_\xi))$.
\item If $T(h_{f,\theta}(v))\neq 0$ and  $(f,\theta)=(\bar f,\bar \theta)$, then $ \Phi(\phi\circ \ga_0)(u_\xi)=\phi(v,f,\theta)=v$ while  $T(R(u_\xi))= T(h_{f,\theta}(v))=w$  is such that $\nrm{w-v}_1\le \vep$.
\item If $T( h_{f,\theta}(v))\neq 0$ and  $(f,\theta)\neq (\bar f,\bar \theta)$, then $ \Phi(\phi\circ \ga_0)(u_\xi)=\phi(v,f,\theta)=w$ is chosen such that $\vep\ge \nrm{w- T(h_{f,\theta}(v))}_1=\nrm{w-T(R(u_\xi))}_1$.\qedhere
\end{enumerate}  
%
%

\fprucl

\let\qed\relax
\fprue
%
%
%
%
%

\subsection{  (ARP) of Polyhedral spaces and finite-dimensional spaces}\label{uiuhyuuy893232321}
We give   an explicit proof of the approximate Ramsey property of the class of finite-dimensional polyhedral spaces. This is done by using injective envelopes of polyhedral spaces, and then  by reducing colorings of polyhedral spaces to colorings of $\ell_\infty^n$-spaces. We also use this to explicitly prove the (ARP) of the class of all finite-dimensional Banach spaces.
 In this way, knowing the number of extreme points of the dual unit ball of given spaces, one can estimate upper bounds of the  corresponding   Ramsey numbers. For simplicity, we present the proof in the case of real Banach spaces. Thus, all the Banach spaces are assumed to be real in this section.
\defi
A  finite-dimensional  space $F$  is called \emph{polyhedral} when its unit ball $\mr{Ball}(F)$ is a
polyhedron, i.e., when the set $\partial_e(\ball(F))$ of \emph{extreme} points of   $\ball(F)$  is finite.
\fdefi
The spaces $\ell_\infty^n$ and $\ell_1^n$ are polyhedral. In fact,  a   finite-dimensional  space  is
polyhedral if and only if its dual ball is  polyhedral. It follows from this, a separation argument, and the
Milman theorem,  that a  finite-dimensional  space $F$ is polyhedral if and only if there is a finite set
$A\con \mr{Sph}(F^*)$ such that $\nrm{x}=\max_{f\in A} f(x)$ for every $x\in F$.  Also, every subspace of a
polyhedral space is   polyhedral, and every finite-dimensional polyhedral space embeds into $\ell_\infty^n$ for some $n\in \mathbb{N}$.

\defi[Polyhedral spaces]
Given an integer $d$, let $\mr{Pol}_{d}$  be the class of all polyhedral spaces  $F$  such that
$\#\partial_e(B_{F^*})=2d$. Given $d,m\in \N$,  $r\in \N$   and $\vep>0$,  let $\mathbf{n_{pol}}(d,m,r,\vep)$
be the minimal integer $n\ge m$ such that for every $F\in \mr{Pol}_{d}$ and $G\in \mr{Pol}_{m}$,  every
$r$-coloring of $ \mr{Emb}(F,{\ell_\infty^n})$  has an $\vep$-monochromatic set of the form $T\circ
\mr{Emb}(F,G) $ for some $T\in \mr{Emb}(G,{\ell_\infty^n})$.
\fdefi

\defi[Injective envelope of a polyhedral space]
The injective envelope of a polyhedral space $F$ is a pair $(n_F,\Psi_F)$, where $n_F$ is an integer and   $\Psi_F\in \mr{Emb}(F,\ell_\infty^{n_F})$ such that for every   isometric embedding $T:F\to \ell_\infty^n$ there is an isometric embedding $U:\ell_\infty^{n_F}\to \ell_\infty^n$ such that  $T= U\circ \Psi_F$.
\fdefi

\prop\label{8998786444oig1}
$\mathbf{n_{pol}}(d,m,r,\vep)=\mathbf{n_{\infty}}(d,m,r,\vep)$.
\fprop

\prue[{Proof of Proposition \ref{8998786444oig1}}]
First of all, $\ell_\infty^k\in \mr{Pol}_k$, so $\mathbf{n_{pol}}(d,m,r,\vep)\ge
\mathbf{n_{\infty}}(d,m,r,\vep)$. Fix now  an $r$ coloring $c$ of   $\mr{Emb}(F,{\ell_\infty^n})$. Let
$\widehat{c}:\mr{Emb}({\ell_\infty^{d} },{\ell_\infty^n}) \to r$
 be defined for  $U\in \mr{Emb}(\ell_\infty^{d},\ell_\infty^n)$ by
$ \widehat{c}(U):= c (U\circ \Psi_F)$. Let $\widehat{T}\in \mr{Emb}({\ell_\infty^{m}} ,{\ell_\infty^n})$
and $\widehat{r}<r$ be such that
\begin{equation}\label{klnfldshf732}
\widehat{T}\circ \mr{Emb}({\ell_\infty^{d}}, {\ell_\infty^{m}})\con (\widehat{c}^{-1}\{\widehat{r}\})_\vep.
\end{equation}
Let $T:=\widehat{T}\circ \Psi_G.$ We claim that $T\circ \mr{Emb}(F,G) \con (c^{-1}\{\widehat{r}\})_\vep$.
Let $U\in  \mr{Emb}(F,G)$, and let $W\in \mr{Emb}(\ell_\infty^{d}, \ell_\infty^{m})$ be such that $\Psi_G\circ U=W\circ \Psi_F$. From the
inclusion in \eqref{klnfldshf732} there exists   $V\in \mr{Emb}(\ell_\infty^{d},\ell_\infty^{m})$ such that
$\widehat{c}(V)=\widehat{r}$ and $\nrm{V-\widehat{T}\circ W}< \vep$. Let $\widehat{V}:=V\circ
\Psi_F$. Then $c(V\circ \Psi_F)=\widehat{c}(V)=\widehat{r}$, while
\[  
\nrm{\widehat{V}-T\circ U}=   \nrm{V\circ \Psi_F-\widehat{T} \circ \Psi_G\circ U}=\nrm{V\circ \Psi_F-\widehat{T} \circ W\circ \Psi_F}\le  \nrm{V-\widehat{T} \circ W}< \vep.\qedhere
 \] \let\qed\relax
\fprue

\subsubsection{Approximate Ramsey property for  finite-dimensional  normed spaces}\label{uiuhyuuy893232322}
We give an explicit, constructive proof of  approximate Ramsey property  arbitrary  finite-dimensional   normed spaces.   The proof is based on the approximate Ramsey property of polyhedral spaces and  the well known fact that the finite-dimensional polyhedral spaces are dense in the class of  finite-dimensional  normed spaces with respect to the
Banach-Mazur distance.  In fact, we have the following.
\prop\label{approx_by_polyh}
Suppose that $\dim X=k$. For every $0<\vep<1$ there is  a polyhedral space $X_0\in \mr{Pol}_d$ such that $d_\mr{BM}(X,X_0)\le \vep$, where $d\le ((2+3\vep)/\vep)^k$.
\fprop
\prue
Let $\de:=\vep(1+\vep)^{-1}$. Let $D\con \mr{Sph}(X^*)$ be a finite $\de$-dense subset of $S_{X^*}$ of cardinality $\le (1+2\de^{-1})^k=((2+3\vep)/\vep)^k$
(see for example \cite[Lemma 2.6]{milman_asymptotic_1986}). On $X$ we define the polyhedral norm $N(x):=\max_{f\in D} |f(x)|$. It follows that $X_0:=(X,N)\in \mr{Pol}_{d}$ with $d\le \#D$, and $d_{\mr{BM}}(X,X_0)\le \vep$.
\fprue

\defi
  Given $X$ of finite dimension, and $\theta\ge 1$, let
$\Eemb_\theta(X,Y)$ be the collection of all 1-1 mappings $T:X\to Y$ such that $1\le \nrm{T},\nrm{T^{-1}}$ and $\nrm{T}\cdot \nrm{T^{-1}}\le \theta$.

Let $(X_i)_{i\le n}$ be a sequence of Banach spaces. We say that a  pair $(Y,J)$
of a Banach space $Y$ and $J\in \Emb(X_{n},Y)$ is   \emph{$(\theta,\tau)$-correcting} for
$\bar X$ ($1<\theta<\tau$) when every $X_i$ isometrically embeds into $Y$, and
for every $j<n$ and every $\ga\in \Eemb_\theta(X_j,X_n)$ there exists $I_\ga\in \mr{Emb}(X_j,Y)$ such that $\nrm{J \circ \ga- I_\ga}<\tau-1$.
\fdefi

\prop\label{oi43jioj4ffr333}
Every finite sequence of  finite-dimensional  spaces $(X_i)_{i\le n}$ and every $1<\theta<\tau$ has  a
$(\theta,\tau)$-correcting pair $(Y,J)$.  
Moreover, when each $X_j$ is polyhedral,  then $Y$ can be taken polyhedral. 

\fprop
\prue
The proof is by induction on $n\ge 1$.  Suppose first that $n=1$.
A simple inductive argument, where the case $\#\mc N=1$ is proved by  Kubis and  Solecki in \cite[Lemma 2.1]{kubis_proof_2013}, gives the following.
%
\clam\label{clma_approx}
Suppose that $\mathcal N\con \Eemb_\theta(X_0,X_1)$ is finite. Then there exist a
 finite-dimensional  space $Y$ 
 and $\Theta\in \mr{Emb}(X_1,Y)$ such that for every  $T\in \mathcal N$ there
is $I\in \mr{Emb}(X_0,Y)$  such that $\nrm{I-\Theta\circ T}< \theta-1$.
\fclam
Let   $\mathcal N$ be a finite $(\tau-\theta)$-net of
$\Eemb_\theta(X_0,X_1)$.
%
  Then the pair $(X,I)$ obtained by applying Claim \ref{clma_approx} to $\mc N$ is $(\theta,\tau)$-correcting for $(X_0,X_1)$.
Now suppose that $n>1$. Find a $(\theta,\tau)$-correcting pair $(Y_0, \Theta_0)$  for $(X_j)_{j=1}^n$
 Let  $\mathcal N$ be a finite $(\tau-\theta)$-net of
$\Eemb_\theta(X_0,X_n)$.  
Let
$(Y,\Theta_1)$  be a pair obtained by applying Claim \ref{clma_approx} to $\Theta_0\circ \mc N$. It can be easily verified that $(Y,\Theta_1\circ
\Theta_0 )$ is a $(\theta,\tau)$-correcting pair for $(X_j)_{j\le n}$. 
 \fprue

\begin{theorem}
 The class of all finite-dimensional Banach spaces $\mr{FdBa}$ has the (SRP). 
\end{theorem}

\prue 
We know that $\mr{FdBa}$ is a stable Fraïssé class, so we only have the proof that it satisfies the discrete (ARP). 
Fix finite-dimensional spaces $F$, $G$, $r\in \N$, $\vep>0$,  and set $\de:= \vep/5$.   Let
$F_0\in \mr{Pol}_d$, $G_0$ be polyhedral, and  surjective isomorphisms $\Phi_F:F\to F_0$ and $\Phi_G: G\to
G_0$  such that $\nrm{\Phi_F}=\nrm{\Phi_G}=1$ and $\nrm{\Phi_F^{-1}},\nrm{\Phi^{-1}_G}<1+\vep/5$.  Notice that $d$ can be taken such that $d\le  ( (10+3\vep)/\vep    )^{\dim F}$. Let
\begin{enumerate}[(i)]
\item  $(H_0,\Theta_0)$ be a  $(1+\vep/5,1+\vep/4)$-correcting pair for $(F_0,G_0)$ with $H_0\in \mr{Pol}_m$, and let 
\item $(H,\Theta_1)$ be a
$(1+\vep/5,1+\vep/4)$-correcting pair for the triple $(F,G,\ell_\infty^n)$ 
where $n:=\mbf{n_\mr{pol}}(d,m,r,\vep/4)$. 
  
\end{enumerate}  
 We claim that $H$ works. Fix $c:\mr{Emb}(F,H)\to r$. Let
$\widehat{c}:\mr{Emb}(F_0,\ell_\infty^n)\to r$ be the induced coloring defined for $\ga\in
\mr{Emb}(F_0,\ell_\infty^n)$ by choosing $I_\ga\in \mr{Emb}(F,H)$ such that
$\nrm{I_\ga - \Theta_1\circ \ga \circ \Phi_F}<\vep/4$  and declaring $\widehat{c}(\ga):=c(I_\ga)$. By the Ramsey property of $n$, there exists $\ro\in \mr{Emb}(H_0,\ell_\infty^n)$ and $\widehat r<r$ such that $\ro \circ \mr{Emb}(F_0,H_0)\con (\widehat{c}^{-1}\{\widehat r\})_{\vep/4}$. Let $S\in \mr{Emb}(G,H)$ be such that
\begin{equation}\label{oi43irro34i4556gghn}
\nrm{S-\Theta_1\circ \ro \circ \Theta_0\circ \Phi_G}<\frac\vep4.
\end{equation}
\clam
$S\circ \mr{Emb}(F,G)\con (c^{-1}(\widehat r))_{\vep}$.
\fclam
\prucl
Fix $T\in \mr{Emb}(F,G)$. Let $\tau\in \mr{Emb}(F_0,H_0)$ be such that $\nrm{\tau- \Theta_0\circ \Phi_G\circ T\circ \Phi_F^{-1}}<\vep/4$. This is possible because $\Phi_G\circ T\circ \Phi_F^{-1}\in \Eemb_{1+\vep/5}(F_0,G_0)$. Let now $\ga\in
\mr{Emb}(F_0,\ell_\infty^n)$ be such that $\widehat{c}(\ga)=\widehat{r}$ and
$ \nrm{\ga- \ro \circ \tau}<\vep/4$.
Then, $c(I_\ga)=\widehat r$ and  $\nrm{ S\circ T-I_\ga }<\vep$.  It follows from \eqref{oi43irro34i4556gghn}
and the fact that the operator $T$ satisfies that $\nrm{T}=1$, that
\begin{equation*}\label{uuuuunnnno1}
\nrm{S\circ T - \Theta_1 \circ \ro \circ \Theta_0\circ \Phi_G\circ T}<\frac\vep4.
\end{equation*}
This is the diagram:

 \begin{figure}[H]
\begin{tikzpicture}[descr/.style={fill=white,inner sep=2pt}]

\matrix (m) [matrix of math nodes, row sep=2em, column sep=2em, text height=1.5ex, text depth=0.25ex]
{     F   &   G   & & &   \\
 F_0 &  G_0 & H_0 & \ell_\infty^n & H
\\
 };

\path[->,font=\normalsize]

(m-1-1) edge node[above] {$T$} (m-1-2)
             edge [bend right=130,looseness=1.6] node[below]  {$I_\ga$} (m-2-5)
(m-1-1) edge node[right]   {}  (m-2-1)

(m-1-2) edge node[right] {}   (m-2-2)
             edge [bend left=25,looseness=.5] node[above]  {$S$} (m-2-5)
(m-2-1) edge node[below]  {$t$} (m-2-2)
             edge [bend right=60,looseness=1.2]  node[above]  {} (m-2-3)
             edge  [bend right=60,looseness=1.2]  node[right]  {} (m-2-4)

(m-2-2) edge node[above]  {$\Theta_0$} (m-2-3)

(m-2-3) edge node[above]  {$\ro$} (m-2-4)

(m-2-4) edge node[above]  {$\Theta_1$} (m-2-5)


;

\node at (-2,  0) (node1) {\scalebox{2.2}{$\circlearrowright$} };
\node at (0.3,  -0) (S1) {\scalebox{2.2}{$\circlearrowright$} }; \node at (0.3,  -0) (S2)
{\scalebox{0.7}{${\vep/4}$} };

\node at (-1.3,  -1.2) (tau1) {\scalebox{2.2}{$\circlearrowright$} }; \node at (-1.3,  -1.2) (tau2)
{\scalebox{0.7}{$\vep/4$}  };
\node at (0.3,  -1.3) (node6) {\scalebox{2.2}{$\circlearrowright$} }; \node at (0.3,  -1.3) (node7)
{\scalebox{0.7}{$\vep/4$} };
\node at (2.2,  -1.6) (node8) {\scalebox{2.2}{$\circlearrowright$} }; \node at (2.2,  -1.6) (node9)
{\scalebox{0.7}{$\vep/4$} };
\node at (-3,  0) (F) {\scalebox{1}{$\Phi_F$} };
\node at (-1.0,  0) (G) {\scalebox{1}{$\Phi_G$} };

\node at (0.9,  -1.85) (ga) {\scalebox{1}{$\ga$} };

\node at (-.50,  -1.65) (tau) {\scalebox{1}{$\tau$} };

\end{tikzpicture}
\end{figure}
\noindent Consequently, $\nrm{S\circ T -I_\ga}<\vep$.
 \fprucl
 \let\qed\relax
\fprue

\subsection{Finite metric spaces}\label{uiuhyuuy893232326}
 Recall that the Urysohn space $\mbb U$ is the unique (up to isometry) ultrahomogeneous universal separable complete metric space. Pestov proved in \cite{pestov_ramsey-milman_2002} that the group  $\iso(\mbb U)$ of surjective isometries of $\mbb U$ is extremely amenable, using the method of concentration of measure. It is also proved   a version of the (KPT) correspondence for $\iso(\mbb U)$, that gives as a consequence the following  the   (ARP) of finite metric spaces. 
 

\teor\label{lkwklerkewrwerrr}
For every finite metric spaces $M$ and $N$, $r\in \N$ and $\vep>0$ there exists  a finite metric space $P$  such that every $r$-coloring  $\mr{emb}(M,P)$ has a $\vep$-monochromatic set of the form  $\sig \circ \mr{emb}(M,N)$ for some $\sig\in \mr{emb}(N, P)$.
\fteor
In the previous statement  $\mr{emb}(M,P)$ is the collection  of all isometric embeddings from   $(M,d_M)$ into  $(N,d_N)$, endowed with the uniform metric  $d(\sig,\tau) := \max_{x\in M} d_N(\sig(x),\tau(x))$. 
Later,  Ne\v{s}et\v{r}il established the (exact) Ramsey property of finite ordered metric spaces   \cite{nesetril_metric_2007}, that is, for every finite \emph{ordered} metric spaces $X$ and $Y$ and every $r\in \N$  there exists a   finite ordered  metric space $Z$ such that for every $r$-coloring of the set $\binom{Z}{X}_<$ of order isometric copies of $X$ in $Z$ there exists  an order isometric copy $Y_0$ of $Y$ in $Z$ such that $\binom{Y_0}{X}_<$ is monochromatic. This  gives another proof of the extreme amenability of $\iso(\mbb U)$. We present here a third proof, which uses the approximate Ramsey property of the class of  finite-dimensional  polyhedral spaces.

Recall that a \emph{pointed} metric space $(X,d,p)$ is a  metric space $(X,d)$ with a distinguished point $p\in X$. Given two pointed metric spaces $(M,p)$ and $(N,q)$, let $\mr{emb}_0(M,N)$ be the set of pointed isometric embeddings, that is, all isometric embeddings from $M$ into $N$ sending $p$ to $q$.
Recall that when  $X$ and $Y$ are normed spaces, we use  $\mr{Emb}(X,Y)$ to denote \emph{linear} isometric embeddings.

%
%

\defi
Given a pointed metric space $(M,d,p)$,  let $\mr{Lip}_0(M,p)$ be the Banach space of all Lipschitz maps $f:M\to \R$ such that $f(p)=0$ endowed with the \emph{Lipschitz} norm,
\begin{equation*}
\nrm{f}:=\sup\left\{\frac{|f(x)-f(y)|}{d(x,y)}\, :\, x\neq y \in X\right\}.
\end{equation*}
Let $\mc F(M,p)$ be the \emph{(Lipschitz) free space} over the pointed metric space $(M,p)$ defined as the closed linear span of the \emph{molecules} $\{\de_x - \de _p\}_{x\in M}$ in the dual space $\mr{Lip}_0(M,p)^*$, where $\de_x$ for $x\in X$ denotes the \emph{evaluation functional} at $x$.  It turns out that $\mc F(M,p)^*$ is isometric to $\mr{Lip}_0(M,p)$.
\fdefi
 It is well-known that $\mr{Lip}_0(M,p)$ does not depend, isometrically, on the choice of the point $p$,  so the corresponding predual will be denoted by $\mc F(M)$.
The space $\mc F(M)$ is also known as the \emph{Arens-Eells} space. More information on Arens--Eells spaces can be found in \cite[Section 2.2]{weaver_lipschitz_1999}.   It is easy to see that the mapping $x\in M\mapsto \de_x\in \mc F(M)$  is an isometric embedding.  Given  finite metric spaces $M$ and $N$ such that $M$ isometrically embeds into $N$,  let $M_\infty=M\cup \{p_\infty\}$,   $N_\infty:=N\cup \{q_{\infty}\}$ be one-point extensions of $M$ and $N$ with the distance $d(p_\infty,x)= d(q_\infty,y):= \min_{z\neq t\in N} d(z,t)$ for every $z\in M$, $y\in N$. Clearly $M_\infty$ and $N_\infty$ are metric spaces.
\prop
Suppose that $M$ and $N$ are metric spaces. Then every isometric embedding $\sig:M\to N$  extends to a unique linear isometric embedding  $T_\sig: \mc F(M_\infty,p_\infty)\to \mc F(N_\infty,q_\infty)$. \qed
\fprop
The proof is a straightforward use of  a standard duality argument, the McShane-Whitney extension  Theorem
for Lipschitz functions \cite[Theorem 1.5.6]{weaver_lipschitz_1999}, and the fact that $\de_{p_\infty}=0$ in $\mc F(M_\infty,p_\infty)$ and
$\de_{q_\infty}=0$ in $\mc F(N_\infty,p_\infty)$.
\prop
If $M$ is a finite metric space, then $\mc F(M)$ is a  finite-dimensional   polyhedral space.
\fprop
\prue
Observe that for each $x\neq y$ in $M$, $\mu_{x,y}:=(\de_x-\de_y)/d(x,y)$ has norm 1 in  $\mr{Lip}_0(M)$  since clearly $\nrm{\mu_{x,y}}\le 1$, and the mapping $d_x(t):=d(x,t)$ for each $t\in M$ is 1-Lipschitz and $\mu_{x,y}(d_x)=1$. It follows from the definition of the Lipschitz norm that the convex hull of $\{\mu_{x,y}\}_{x\neq y \text{ in $M$}}$ is equal to $B_{\mc F(M)}$.
\fprue
\lema\label{iojriojif332223}
Suppose that $M$ and $N$ are two finite metric spaces, suppose $r\in \N$, and that $\vep>0$.  Let $\ro:=\mr{diam}(N)$. Then there exists $n\in \N$ such that  every  $r$-coloring of
$\mr{emb}(M, \ro \cdot B_{\ell_\infty^n} )$ has an $\vep$-monochromatic set of the form  $\sig \circ \mr{emb}(M,N)$ for some $\sig\in \mr{emb}(N, \ro \cdot B_{\ell_\infty^n} )$.
\flema
\prue
Fix finite pointed  metric spaces $(M,p)$, $(N,q)$, $r$ and $\vep>0$. We assume that $M$ isometrically embeds into $N$ since otherwise the statement above is trivially true. Let $d,m$ be such that  $\mc F(M_\infty)\in \mr{Pol}_d$ and $\mc F(N_\infty)\in \mr{Pol}_m$.  Then $n:=\mathbf{n_\mr{pol}}(d,m,r,\vep_0)$, for $\vep_0=\vep/\mr{diam}(M)$ works. Fix a coloring $c:\mr{emb}(M,\ro \cdot B_{\ell_\infty^n})\to r$.  Define
$\widehat{c}:\mr{Emb}(\mc F(M_\infty),\ell_\infty^n)\to r$   by $\widehat{c}(\ga)=c(\sig_\ga)$, where $\sig_\ga: M\to \ro \mr{Ball}(\ell_\infty^n)$ is defined by $\sig_\ga(x)= \ga(\de_x)$ for every $x\in M$. This is well defined since $\nrm{\de_x}=\nrm{\de_x- \de_p}\le d(x,p)\le \mr{diam}(M)\le \mr{diam}(N)$, where the last inequality holds since $\mr{Emb}(M,N)\neq \buit$.  Let $\bar\al\in \mr{Emb}(\mc F(N_\infty), \ell_\infty^n)$ and $\bar r<r$ be such that $\bar \al \circ \mr{Emb}(\mc F(M_\infty),\mc F(N_\infty))\con (\widehat{c}^{-1}(\bar r))_{\vep_0}$.  Let $\bar \tau: N\to \ro \mr{Ball}(\ell_\infty^n)$ be the embedding defined by $\bar \tau (x)= \bar \al(\de_x)$. We claim that $\bar \tau$ works. In fact, $\bar \tau \circ \mr{emb}(M,N)\con (c^{-1}(\bar r))_\vep$.  Let $\sig\in \mr{emb}(M,N)$. Then there exists a unique extension  $\ga_\sig\in \mr{Emb}(\mc F(M_\infty), \mc F(N_\infty))$. Let $\psi\in \mr{Emb}(\mc F(M_\infty), \ell_\infty^n)$ be such that $\widehat{c}(\psi)=\bar r$ and $\nrm{\psi- \bar \al \circ \ga_\sig}<\vep_0$.   Then $\sig_\psi(x)= \psi(\de_x)$ for every $x\in M$ satisfies that $c(\sig_\psi)=\bar r$ and
\[ \pushQED{\qed}
d(\sig_\psi, \bar \tau \circ \sig)=\max_{x\in M}\nrm{\psi(\de_x)- \bar \al( \de_{\sig( x)}) }_\infty=  \nrm{\psi(\de_x)- \bar \al( \ga_\sig(\de_x ) }_\infty <\vep_0 \cdot \mr{diam}(M) =\vep. \qedhere
\popQED
\] \let\qed\relax
\fprue

\prue[{\sc Proof of Theorem \ref{lkwklerkewrwerrr}}]
This is a consequence of Lemma \ref{iojriojif332223}, via a compactness argument.
Fix $M$, $N$, $r$ and $\vep>0$.  Let $n$ be obtained from $M$, $N$, $r$ and $\vep/3$ by applying Lemma \ref{iojriojif332223}. Let $\ro:=\mr{diam}(N)$.   Since $M$ and  $N$ are  finite and $\ro \mr{Ball}(\ell_\infty^n)$ is compact, there exists $P\con \ro \mr{Ball}(\ell_\infty^n)$ finite such that
\begin{equation*}
\mr{emb}(M,\ro \mr{Ball}(\ell_\infty^n)) \con (\mr{emb}(M, P ))_{\frac\vep{3}} \text{ and } \mr{emb}(N,\ro \mr{Ball}(\ell_\infty^n)) \con (\mr{emb}(N, P ))_{\frac\vep{3}}.
\end{equation*}
We claim that $(P,d_\infty)$ works. To this end, let $c:\mr{emb}(P,A)\to r$. Let $\widetilde{c}:\mr{emb}(M, \ro \mr{Ball}(\ell_\infty^n))\to r$ be defined by $\widetilde{c}(\ga)= c(\sig_\ga)$ where $\sig_\ga\in \mr{emb}(M,A)$ is chosen such that $d(\ga,\sig_\ga)<\vep/3$.  By the property of $n$,   there is $\ga\in \mr{emb}(N,\ro \mr{Ball}(\ell_\infty^n))$ and  $\bar r<r$ such that $\ga\circ \mr{emb}(M,N)\con (\widetilde{c}^{-1}(\bar r))_{\vep/3}$. Let $\bar \ga\in \mr{emb}(N,P)$ be such that $d(\ga,\bar \ga)<\vep/3$. It takes a simple computation to see that $\bar \ga \circ \mr{emb}(M,N)\con (c^{-1}(r))_\vep$.
\fprue

\subsection{The closed bifaces of the Lusky simplex and $\mathbf{R}$-Banach spaces}\label{lio3j4oirjir4488}

 There is a natural correspondence between
Banach spaces and  those compact spaces which are {\em absolutely convex}. In the real case, by a compact absolutely
convex set we mean a compact subset of a locally convex topological real
vector space that is closed under absolutely convex combinations of the form 
$\mu x+\lambda y$ for $\lambda ,\mu \in \mathbb{R}$ such that $\left\vert
\lambda \right\vert +\left\vert \mu \right\vert \leq 1$.  Any compact
absolutely convex set $K$ has a canonical involution $\sigma $ mapping $x$
to $-x$. A real-valued continuous function $f$ on $K$ is symmetric if $%
f\circ \sigma =-f$. Similarly, a continuous affine function between compact
absolutely convex sets is symmetric if it commutes with the given
involutions. 
So, given a Banach space $X$,   the unit ball $\mathrm{%
Ball}(X^{\ast })$ of the dual space of $X$ is a compact absolutely convex
set when endowed with the w*-topology. Any compact absolutely convex set $K$
is of this form, where $X$ is the Banach space $A_{\sigma }(K)$ of
real-valued symmetric affine continuous functions on $K$ endowed with the
supremum norm. Each contraction $T:X\to Y$ induces  a  symmetric affine continuous function $T^*:B_{Y^*}\to B_{X^*}$, and vice versa, a given   symmetric affine continuous function $\xi: K\to L$ induces a contraction $\widehat{\xi}:A_\sig(L)\to A_\sig(K)$ by composition.  Furthermore, such a correspondence is functorial, and induces
an equivalence of categories.
The following definition has been introduced in \cite[Section 6.1]{lupini_fraisse_2016}.

\begin{definition} 
A {\em Lazar simplex} is any compact absolutely convex that is   affinely homeomorphic to the unit ball of the dual of a Lindenstrauss space.
\end{definition}					
Lazar simplices have been internally characterized by A. J.  Lazar in \cite{lazar_unit_1972} in terms
of a uniqueness assertion for boundary representing measures, reminiscent of
the analogous characterization of Choquet simplices due to Choquet \cite[%
Section II.3]{alfsen_compact_1971}; see also Subsection \ref%
{Subs:operator_systems} below.  The Lazar simplex corresponding to the
Gurarij space is denoted by $\mathbb{L}$ and called the \emph{Lusky simplex }%
in \cite[Section 6.1]{lupini_fraisse_2016}. It is proved in \cite%
{lupini_fraisse_2016,lusky_gurarij_1976,lusky_construction_1979} that $%
\mathbb{L}$ plays the same role in the category of metrizable Lazar
simplices as the Poulsen simplex $\mathbb{P}$  plays in the category of
metrizable Choquet simplices (see next section \ref{Sec:systems}). Recall that a closed subset $H$ of a Lazar
simplex is a \emph{biface} or \emph{essential face} if it is the absolutely
convex hull of a (not necessarily closed) face \cite{lazar_banach_1971}.
This is equivalent to the assertion that the linear span of $H$ inside $%
A_{\sigma }(K)^{\ast }$ is a w*-closed $L$-ideal \cite%
{alfsen_structure_1972-1,alfsen_structure_1972-2}. Relevant properties of  $\mbb L$:
\begin{enumerate}[$\bullet$]
\item The Lusky simplex is the 
\emph{unique} nontrivial metrizable Lazar simplex with dense extreme
boundary (Lusky   \cite{lusky_gurarij_1976});
\item the Lusky simplex is  \emph{universal} among
metrizable Lazar simplices, in the sense that any metrizable Lazar simplex
is symmetrically affinely homeomorphic to a closed biface of $\mathbb{L}$ 
(Lusky \cite{lusky_construction_1979});
\item   the Lusky simplex is\emph{\
homogeneous}: any symmetric affine homeomorphism between proper closed
bifaces of $\mathbb{L}$ extends to a symmetric affine homeomorphisms of $%
\mathbb{L}$ (Lupini \cite[Subsection 6.1]{lupini_fraisse_2016}).
\end{enumerate}

Our intention is to prove the following:
\begin{theorem}\label{luskydmflkfskd}
Suppose that $H$ is a closed biface of the Lusky simplex $\mathbb{L}$. Then
the group $\mathrm{Aut}_{H}(\mathbb{L}) $ of symmetric affine homeomorphisms 
$\alpha$ of $\mathbb{L}$ such that $\alpha(p)=p$ for every $p\in H$ is
extremely amenable.
\end{theorem}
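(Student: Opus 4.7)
The plan is to deduce the statement directly from Corollary~\ref{Corollary:ea-R-spaces}(1), by using Proposition~\ref{functionals_bifaces} to identify the point stabilizer $\mathrm{Aut}_H(\mathbb{L})$ with the stabilizer of the generic contractive $A_\sigma(H)$-functional on the Gurarij space $\mathbb{G}$, up to $\mathrm{Aut}(\mathbb{G})$-conjugacy. This is exactly the orbit-to-stabilizer reduction that the earlier framework is designed to support, so essentially no new combinatorial content is needed; the work is purely in transporting the known extreme amenability across a categorical equivalence.

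First, I would set up the dictionary. Identify $\mathbb{G}$ with $A_\sigma(\mathbb{L})$ and $\mathrm{Aut}(\mathbb{L})$ with $\mathrm{Aut}(\mathbb{G})$ via the contravariant isomorphism sending a symmetric affine homeomorphism $\tilde{\alpha}$ of $\mathbb{L}$ to the surjective linear isometry $\alpha\colon f\mapsto f\circ\tilde{\alpha}^{-1}$ of $\mathbb{G}$. Let $P_H\colon\mathbb{G}\to A_\sigma(H)$ be the restriction $f\mapsto f|_H$; this is a contractive $A_\sigma(H)$-functional on $\mathbb{G}$. A direct computation shows that $\tilde{\alpha}$ fixes $H$ pointwise if and only if $f(\tilde{\alpha}(p))=f(p)$ for every $f\in\mathbb{G}$ and every $p\in H$, which holds if and only if $P_H\circ\alpha^{-1}=P_H$, i.e.\ $\alpha$ stabilizes $P_H$ under the action $(\alpha,s)\mapsto s\circ\alpha^{-1}$ of $\mathrm{Aut}(\mathbb{G})$ on $\mathrm{CC}(\mathbb{G},A_\sigma(H))$. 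Hence, under the above identification, $\mathrm{Aut}_H(\mathbb{L})$ coincides with the stabilizer $\mathrm{Aut}(\mathbb{G},P_H)$.

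Next, provided $H$ is a proper closed biface, Proposition~\ref{functionals_bifaces} places $P_H$ in the $\mathrm{Aut}(\mathbb{G})$-orbit of the generic contractive $A_\sigma(H)$-functional $\Omega_{\mathbb{G}}^{A_\sigma(H)}$. The Banach space $A_\sigma(H)$ is separable and Lindenstrauss, since $H$ is a Lazar simplex, so Corollary~\ref{Corollary:ea-R-spaces}(1) gives that the stabilizer of $\Omega_{\mathbb{G}}^{A_\sigma(H)}$ is extremely amenable. Stabilizers of points in a single orbit of a continuous Polish group action are conjugate and hence topologically isomorphic as Polish groups, and extreme amenability is invariant under topological group isomorphism; therefore $\mathrm{Aut}(\mathbb{G},P_H)$, and with it $\mathrm{Aut}_H(\mathbb{L})$, is extremely amenable. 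The remaining degenerate case $H=\mathbb{L}$ yields the trivial group, which is trivially extremely amenable.

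I do not foresee a major obstacle: the only delicate point is the direction-of-composition bookkeeping that aligns \emph{fixing $H$ pointwise} on the geometric side with \emph{stabilizing $P_H$} on the functional-analytic side under the contragredient action. Once this correspondence is correctly oriented, the argument is a direct transport of Corollary~\ref{Corollary:ea-R-spaces}(1) along the Kadison-type duality between Lazar simplices and Lindenstrauss spaces.
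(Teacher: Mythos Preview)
Your proposal is correct and follows exactly the approach the paper takes: the paper presents this theorem simply as a reformulation of Corollary~\ref{Corollary:ea-R-spaces}(1) via Proposition~\ref{functionals_bifaces}, and you have filled in precisely the bookkeeping (the identification of $\mathrm{Aut}_H(\mathbb{L})$ with the stabilizer of $P_H$, the conjugacy of stabilizers within an orbit, and the degenerate case $H=\mathbb{L}$) that the paper leaves implicit.
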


\begin{remark}
A similar result holds for complex Banach spaces. In this setting, one
considers compact convex sets endowed with a continuous action of the circle
group $\mathbb{T}$ (\emph{compact convex circled sets}). The compact convex
circled sets corresponding to complex Lindenstrauss spaces (\emph{Effros
simplices}) have been characterized by Effros in \cite{effros_class_1974}.
Again, the unit ball of the dual space of the complex Gurarij space has
canonical uniqueness, universality, and homogeneity properties within the
class of Effros simplices; see \cite[Subsection 6.2]{lupini_fraisse_2016}.
Here one considers the natural complex analog of the notion of a closed
biface (\emph{circled face}). The same argument as above shows that, in the complex case, the
pointwise stabilizer of any closed circled face of $\mathrm{Ball}(\mathbb{G}%
^{\ast })$ is extremely amenable.
\end{remark}

Observe that in the particular case when $H$ is the trivial biface $\left\{ 0\right\} $,
such a statement recovers extreme amenability of the group of surjective
linear isometries of $\mathbb{G}$.   Observe also that given a closed biface $H$ of a Lazar simplex $L$, we have that $g\in \mr{Aut}_H(L)$ if and only if $\widehat{g}\in \iso_{\widehat{i}}(A_\sig(L))$, where $i:H\rightarrow L$ is the inclusion map and where, in general, given  Banach spaces $X$ and $Y$ and an operator $\sigma :X\to Y$ by $\iso_\sigma (X)$ we mean the subgroup of isometries $g$ of $X$ so that $\sigma \circ g=\sigma $.   This motivates our study of such pairs $(X,\sigma )$. 
\begin{definition}[$R$-Banach space]
 Given a  Lindenstrauss space $R$, an {\em $R$-Banach space} is a  couple $\mbf X:=(X,\sig)$ when $\sig:X\to R$ is a linear contraction, called {\em $R$-functional}.   
\end{definition}  

In this category, given $R$-spaces $\mbf{X_0}:=(X_0,\sig_0)$, $\mbf{X_1}:=(X_1,\sig_1)$ and $\de>0$, let $\Emb_\de(\mbf{X_0},\mbf{X_1})$ be the collection of $\de$-isometric embeddings $\ga: X_0 \to X_1$ such that $\nrm{\sig_1 \circ \ga-\sig_0}\le \de$, and   in particular, let $\aut(\mbf X)=\iso_{\sig}(X)$ be the space of surjective isometries such that $\sigma \circ g=\sigma $. We write $(X_0,\sig_0)\con (X_1,\sig_1)$  to denote that $X_0\con X_1$ and that $\sig_1\rest X_0=\sig_0$.  The following result is
established in \cite[Section 5]{lupini_fraisse_2016}.
\begin{theorem}\label{jrio4jir4775jk}
Given a separable Lindenstrauss space $R$ there exists an onto contraction $\Omega_R:\mbb G\to R$ such that the $R$-Banach space $\pmb{\mbb G}_R:=(\mbb G,\Omega_R)$ is
\begin{enumerate}[1)]
\item {\em universal} for separable   $R$-Banach spaces, that is, for every such  space $\mbf X$, $\Emb(\mbf X, (\mbb G, \Omega_R))\neq \buit$; 
\item a {\em stable Fraïssé} $R$-Banach space with modulus of stability $\varpi(\de)=2\de$,   that is, for every finite-dimensional $R$-space $\mbf X:=(X,\sig)\con (\mbb G,\Omega_R)$,   every $\de>0$ and every $\ga\in \Emb_\de(\mbf X, (\mbb G,\Omega_R))$ there is an  isometry $g\in \iso_{\Omega_R}(\mbb G)$ such that  $\nrm{g\rest X- \ga}\le 2\de$.

\end{enumerate}

\end{theorem}  
Note that a  classical
result of Wojtaszczyk \cite{wojtaszczyk_remarks_1972} asserts that the
separable Lindenstrauss spaces are precisely the separable Banach spaces
that are isometric to the range of a contractive projection on the Gurarij
space $\mathbb{G}$.  
The $R$-functional $\Omega _{R}$   is called the \emph{generic}  
contractive $R$-functional on $\mbb G$. The name is justified by
the fact that the $\iso(\mbb G)$-orbit of $\Omega _{ R}$ is a dense $G_{\delta }$ subset of the space   of  contractive $R$-functionals on $\mbb G$.
The
universality and homogeneity properties of $\mathbb{L}$ can be seen as
consequences of the following result, established in \cite[Subsection 6.1]%
{lupini_fraisse_2016} using the theory of $M$-ideals in Banach spaces
developed by Alfsen and Effros \cite%
{alfsen_structure_1972-1,alfsen_structure_1972-2}, and the Choi--Effros
lifting theorem from \cite{choi_lifting_1977}.

\begin{proposition} \label{functionals_bifaces} 

Suppose that $R$ is a separable Lindenstrauss space. A contraction $s:\mbb G\to R$  belongs to the $\iso(\mbb G)$-orbit of $\Omega_R$  if and only if $s$ is a {\em non-trivial facial quotient}, that is,  if $\ker s\neq 0$, and $s^*$ is an isometric embedding such that $s^*(\ball(R^*))$ is a  closed biface of $\ball(\mbb G^*)$. 
%
%
%
%
%
\end{proposition}
In particular, suppose that $H$ is a proper closed biface of $\mbb L$, $i:H\to \mbb L$ is the canonical inclusion and we identify canonically $\mbb G$ and $A_\sig(\mbb L)$. Then $\widehat{i}: A_\sig(\mbb L)\to A_\sig(H)$ is a non-trivial facial quotient, hence $\widehat{i}\in \iso(\mbb G)\cdot \Om_{A_\sig(H)}$.  This implies that $\iso_{\widehat{i}}(\mbb G)= \iso_{\Om_{A_\sig(H)}}(\mbb G)$, and Theorem \ref{luskydmflkfskd} can be rephrased as follows.

\begin{theorem}\label{Corollary:ea-R-spaces}  The stabilizer of the generic contractive $R$-functional on the
Gurarij space is extremely amenable for any separable Lindenstrauss Banach space $R$.  \end{theorem}  
 
When $R=\{0\}$, we recover the extreme amenability of $\iso(\mbb G)$. In fact, the proof of this extension is based on the approximate Ramsey property of finite-dimensional $R$-Banach spaces, by means of  the KPT correspondence.  The corresponding non-commutative version of the previous theorem is established in \cite{bartosova_ramsey_2017}. 

\subsubsection{KPT correspondence and (ARP) of $R$-Banach spaces}\label{775939hhttti}

\label{Subs:extreme-functional} \label{Subs:ARP-functional}
We give a proof of Theorem \ref{luskydmflkfskd}. By the correspondence between the categories of Lazar simplices and that of $R$-Banach spaces, Theorem \ref{luskydmflkfskd} is equivalent to the fact that $\aut(\pmb{\mbb G}_R)$ is extremely amenable, which will be proved by means of a KPT correspondence and an appropriate approximate Ramsey property.  Given an $R$-space $\mbf X =(X,s)$, let $\age(\mbf X)$ be the collection of pairs $(F,s\rest F)$, where $F\in \age (X)$. Given a family $\mc F$ of finite-dimensional $R$-Banach spaces, let $[\mc F]$ be the collection of all separable $R$-Banach spaces $\mbf X$ such that for every $\mbf F\in\age(\mbf X)$ and every $\de>0$ there is some $\mbf G\in \mc F$ such that $\Emb_\de(\mbf F, \mbf G)\neq \buit$.     
 
\begin{theorem}[KPT correspondence for stable Fraïssé $R$-Banach spaces] \label{ARP=DARPforR-b}  Suppose that $\mbf E=(E,\Om)$ is an approximately ultrahomogeneous $R$-Banach space.  Then the following are equivalent 
\begin{enumerate}[1)]
\item $\aut(\mbf E)$ is extremely amenable.
\item $\mr{Age}(\mbf E)$  satisfies the  (ARP), that is    for every $\mbf X,\mbf Y\in \age(\mbf E)$ and $\vep>0$  there is $\mbf Z\in \age(\mbf E)$ such that every continuous coloring of $\Emb(\mbf X,\mbf Z)$ $\vep$-stabilizes on $\ga\circ \Emb(\mbf X,\mbf Y)$ for some $\ga\in \Emb(\mbf Y,\mbf Z)$.
\end{enumerate}
Suppose that  $\mc F$ is a family such that $\mc F\preceq \age(\mbb E)$, $\mbb E\in [\mc F]$. Then (1), (2), and (3) are   equivalent to

\begin{enumerate}[1)]\addtocounter{enumi}{2}
\item   $\mc F$ satisfies the {\em stably approximate Ramsey property (SRP)} with modulus $\varpi(\de)$, that is   for every $\mbf X,\mbf Y\in \mc F$, $\vep>0$ and $\de\ge 0$ there is $\mbf Z\in \mc F$ such that every  continuous coloring of $\Emb_\de(\mbf X,\mbf Z)$ $(\varpi(\de)+\vep)$-stabilizes on $\ga\circ \Emb_\de(\mbf X,\mbf Y)$ for some $\ga\in \Emb(\mbf Y,\mbf Z)$.

\end{enumerate}
\end{theorem}  
 The proof of Theorem \ref{ARP=DARPforR-b} is a straightforward modification of that of Theorem  \ref{liwjr3iwejirwe}; we leave its details to the reader.

\begin{theorem} \label{Theorem:ARP-R-spaces}  The following classes have the (SRP) with modulus of stability $2\de$:  
\begin{enumerate}[a)]
\item For every $k\in \N$, the class of $\ell_\infty^k$-Banach spaces $(X,s)$ where $X=\ell_\infty^n$ for some $n\in \N$.
\item For every  separable Lindenstrauss space  $R$  the  class of all finite-dimensional $R$-Banach spaces. 
\end{enumerate}

 \end{theorem}
\begin{proof}
 As for the case of Banach spaces in Proposition \ref{SRP=cSRP}, a class of $R$-finite dimensional spaces has the (SRP) with modulus $\varpi$ if and only if it satisfies the (ARP) and it has the corresponding stable amalgamation property with modulus $\varpi$. 
{\it a):}   
\clam \label{766jkhhuur} The family $\mc F$ of  $\ell_\infty^k$-spaces of the form $(\ell_\infty^n,s)$ for some $n$ has the stable amalgamation property with modulus $2\de$.
\fclam
\prucl
 Fix $\ell_\infty^k$-spaces $\mbf X=(\ell_\infty^d,s)$, $\mbf Y=(\ell_\infty^m,t)$ and $\mbf Z=(\ell_\infty^n,u)$, $\de>0$, and  $\ga\in \Emb_\de(\mbf X, \mbf Y)$ and $\eta \in \Emb_\de(\mbf X, \mbf Z)$.  Let $I\in \Emb(\ell_\infty^m, \ell_\infty^{m+n})$ and $J\in \Emb(\ell_\infty^n, \ell_\infty^{m+n})$ be such that $\nrm{I\circ \ga- J\circ \eta}\le \de$ (see Proposition \ref{li23jior3ji3r}). Then $I_0:=(I, t): \ell_\infty^m\to \ell_\infty^{m+n+k}$ and   $J_0:=(J, u): \ell_\infty^n\to \ell_\infty^{m+n+k}$ satisfies that $I_0\in \Emb(\mbf Y, (\ell_\infty^{m+n+k}, \pi))$,  $J_0\in \Emb(\mbf Z, (\ell_\infty^{m+n+k}, \pi))$  and $\nrm{I_0\circ \ga - J_0\circ \eta}=\max \{ \nrm{I\circ \ga - J \circ \eta}, \nrm{t\circ \ga- u\circ \eta}\}\le 2\de$, where $\pi:\ell_\infty^{m+n+k}\to \ell_\infty^k$ is the projection $\pi((a_j)_{j<m+n+k})=(a_j)_{j=m+n}^{m+n+k-1}$. 
\fprucl

 We prove now the (ARP) of $\mc F$.    Fix   $\ell_\infty^k$-spaces   $\mbf X:=(\ell_\infty^d, s)$  and $\mbf Y:=(\ell_\infty^m,u)$,  and  $\vep>0$. Let $n\in \N$ be witnessing  the (ARP) of $\{\ell_\infty^r\}_r$ for the initial parameters $d,m$,   and $\vep$. Let $\pi: \ell_\infty^{n+k}\to \ell_\infty^k$ be the canonical second projection $\pi((a_j)_{j<n+k}):= (a_j)_{j=n}^{n+k-1}$. We claim that $\mbf Z:=(\ell_\infty^{n+k},\pi)$ works:  For suppose that $c:\Emb(\mbf X,\mbf Z)\to [0,1]$ is a continuous coloring. Let $\widehat{c}: \Emb(\ell_\infty^d, \ell_\infty^n)\to [0,1]$ be defined for $\ga\in \Emb_\de(\ell_\infty^d,\ell_\infty^n)$ by $\widehat{c}(\ga)= c( \ga, s)$. Observe that $\widehat{c}$ is 1-Lipschitz, so there is $I\in \Emb(\ell_\infty^m,\ell_\infty^n)$ such that $\osc( \widehat{c}\rest I\circ \Emb(\ell_\infty^d,\ell_\infty^m))\le \vep$. Let $J:=(I, t)\in \Emb(\mbf Y, \mbf Z)$. Notice that given $\ga\in \Emb(\mbf X,\mbf Y)$,  we have that  $ J\circ \ga   =(I\circ \ga, t\circ \ga)= (I\circ \ga, s)$. Hence, $\osc( c\rest J\circ \Emb(\mbf X, \mbf Y ))\le \osc( \widehat{c}\rest I\circ \Emb(\ell_\infty^d,\ell_\infty^m))\le \vep$

{\it b)}:  Fix  a Lindenstrauss space $R$, and choose an increasing sequence of subspaces $(R_n)_n$ whose union is dense in $R$ and such that each $R_n$ is isometric to $\ell_\infty^n$. 
 
Let $\mc F$ be the class  of $R$-Banach spaces $(X,s)$ where $X$ is isometric to some $\ell_\infty^d$ and such that $\im s\con \bigcup_n R_n$. It follows easily from {\it a)} that $\mc F$  has the (SRP) with modulus $2\de$.  We know from Theorem \ref{jrio4jir4775jk} that $\pmb{\mbb G}_R=(\mbb G,\Omega_R)$  is a stable Fraïssé $R$-Banach space such that $\mr{age}( \pmb{\mbb G}_R)$ consists of all finite-dimensional $R$-Banach spaces. On the other hand, $ \pmb{\mbb G}_R\in [\mc F]$, so it follows from {\it a)} and the KPT correspondence in Theorem  \ref{ARP=DARPforR-b} that $\mr{age}(\pmb{\mbb G}_R)$ satisfies the (SRP) with modulus $2\de$. 
\end{proof}
Theorem \ref{Theorem:ARP-R-spaces} and the characterization of extreme
amenability in Theorem \ref{ARP=DARPforR-b}  give the previously announced result. 
\begin{reptheorem}{Corollary:ea-R-spaces}
The stabilizer of the generic contractive $R$-functional on the
Gurarij space is extremely amenable for any separable Lindenstrauss Banach space $R$. \qed
\end{reptheorem}

%

%
%

\section{The Ramsey property of Choquet simplices and function systems 
}\label{Sec:systems}

The main goal of this section is to establish the approximate (dual) Ramsey property for
Choquet simplices with a distinguished point. We will then apply this to compute the universal
minimal flow of the automorphisms group of the Poulsen simplex $\mbb P$. We will prove that the minimal compact $\mathrm{Aut}(%
\mathbb{P})$-space is the Poulsen simplex $\mathbb{P}$ itself endowed with
the canonical action of $\mathrm{Aut}(\mathbb{P})$, answering \cite[Question
4.4]{conley_fraisse_2017} (the fact that such an action is minimal is a
result of Glasner from \cite{glasner_distal_1987}).   This will be done by studying function systems with a
distinguished unital positive map to a fixed separable Lindenstrauss  function system $R$. 
%
%
%
%
Similarly as in the case of Banach spaces  (\S\S\ref{lio3j4oirjir4488}), we will also consider  function systems $X$  with a distinguished
{\em state}, a unital linear contraction $s: X \to R$ where $R$ is a  fixed  separable  Lindenstrauss function system.
%
%
\subsection{Choquet simplices and function systems\label%
{Subs:operator_systems}}

Recall that a \emph{compact convex set }$K$\emph{\ }is a compact convex
subset of some locally convex topological vector space. In a compact convex
set one can define in the usual way the notion of convex combination. The 
\emph{extreme boundary }$\partial _{e}K$ of $K$ is the set of \emph{extreme
points} of $K$, that is, points that cannot be written in a nontrivial way
as a convex combination of points of $K$. When $K$ is metrizable the
boundary $\partial _{e}K$ is a $G_{\delta }$ subset. In this case, a \emph{%
boundary measure }on $K$ is a Borel probability measure on $K$ that vanishes
off the boundary of $K$. Choquet's representation theorem asserts that any
point in a compact convex set can be realized as the barycenter of a
boundary measure on $K$ (\emph{representing measure}). A compact convex set $%
K$ where every point has a \emph{unique }representing measure is called a 
\emph{Choquet simplex}. In particular, any standard finite-dimensional
simplex $\Delta _{n}$ for $n\in \mathbb{N}$ is a Choquet simplex.

The class of standard finite-dimensional simplices $\Delta _{n}$ for $n\in 
\mathbb{N}$ naturally form a {\em projective} Fra\"{\i}ss\'{e} class in the sense
of \cite{irwin_projective_2006}; see \cite{kubis_lelek_2015}. The
corresponding Fra\"{\i}ss\'{e} limit is the \emph{Poulsen simplex} $\mathbb{P%
}$. Initially constructed by Poulsen in \cite{poulsen_simplex_1961}, $%
\mathbb{P}$ is a nontrivial metrizable Choquet simplex with dense extreme
boundary.\ It was later shown in \cite{lindenstrauss_poulsen_1978} that
there exists a \emph{unique }nontrivial metrizable Choquet simplex with this
property up to affine homeomorphism. Furthermore $\mathbb{P}$ is \emph{%
universal }among metrizable Choquet simplices, in the sense that any
metrizable Choquet simplex is affinely homeomorphic to a closed proper face
of $\mathbb{P}$. Also, the Poulsen simplex is ultrahomogeneous: any affine
homeomorphism between closed proper faces of $\mathbb{P}$ extends to an
affine homeomorphism of $\mathbb{P}$.

The Poulsen simplex $\mathbb{P}$ can also be studied from the perspective of
direct Fra\"{\i}ss\'{e} theory by considering the natural dual category to
compact convex sets. For a compact convex set $K$, let $A(K)$ be the space of
complex-valued continuous affine functions on $K$. This is a closed subspace
of the space $C(K)$ of complex-valued continuous functions on $K$, endowed
with the supremum norm. Furthermore, $A(K)$ contains a distinguished element,
its \emph{unit}, that corresponds to the constant function equal to  $1$.
In general, recall  that a \emph{function system} is a closed subspace $V$ of $C(T)$ for
some compact Hausdorff space $T$ containing the function constantly equal to 
$1$ and such that if $f\in V$ then the function $f^{\ast }$ defined by $%
f^{\ast }\left( t\right) =\overline{f\left( t\right) }$ also belongs to $V$.
So,\ $A(K)$ is a function system, and in fact any function system $%
V\subseteq C(T)$ arises in this way from a suitable compact convex set $K$.
Precisely, $K$ is the compact convex set of \emph{states }of $V$, that is,
the contractive functionals on $V$ that are \emph{unital}, i.e., that map the
unit of $C(T)$ to $1$.

As mentioned in the introduction, the assignment $K\mapsto A(K)$ establishes
a contravariant equivalence of categories from the category of compact
convex sets and continuous affine maps to the category of function systems
and unital contractive linear maps. The finite-dimensional function systems
that are \emph{injective} in such   category are precisely the function
systems $A(\Delta _{n})=\ell _{\infty }^{n}$ corresponding to the standard
finite-dimensional simplices $\Delta_{n}$. The function systems that
correspond to Choquet simplices are precisely those that are Lindenstrauss
as Banach spaces, or equivalently, the function systems whose identity map
is the pointwise limit of \emph{unital} contractive linear maps that factor
through finite-dimensional injective function systems.

The function systems approach has been adopted in the work of Conley and T%
\"{o}rnquist \cite{conley_fraisse_2017} and, independently, in \cite%
{lupini_fraisse_2016,lupini_fraisse_2017}, where it is shown that the class
of finite-dimensional function systems is a Fra\"{\i}ss\'{e} class. Its
limit can be identified with the function system $A(\mathbb{P})$
corresponding to the Poulsen simplex, which we will call the \emph{Poulsen
system}.
 The model-theoretic properties of $A(%
\mathbb{P})$ and their non-commutative analogues   have been
studied in \cite{goldbring_model-theoretic_2015}.


 Suppose that $X$ is an function system. Recall that a \emph{%
state} on $X$ is a unital contractive linear map from $X$ to $\mathbb{C}$. More generally,   if $R$
is any separable  Lindenstrauss function system, we call a unital contractive linear map from $X$ to $R$
an {\em $R$-state} on $X$.  Let $\mathrm{UC}(X,R)$  be the space of $R$-states on $X$. We have that $\mathrm{UC}(X,R)$ is a Polish space endowed with a canonical continuous action
of $\mathrm{Aut}(X)$. An \emph{$R$-function system} is a pair $\mathbf{X}%
=(X,s_{X})$ of a function system $X$ and an $R$-state $s_{X}$ on $X$. In the
following, we regard $\mathrm{UC}\left( X,R\right) $ as an $\mathrm{Aut}%
\left( X\right) $-space with respect to the canonical action $\mathrm{Aut}%
\left( X\right) \curvearrowright \mathrm{UC}\left( X,R\right) $ given by $%
\left( \alpha ,s\right) \mapsto s\circ \alpha ^{-1}$. 


Given $R$-function systems $\mathbf X=(X,s_X)$ and $\mathbf Y=(Y,s_Y)$ and given $\de\ge0$, let $\Emb_\de(\mathbf X, \mathbf  Y)$ be the collection of unital $\de$-isometric embeddings $\ga:X\to Y$ such that $\nrm{s_Y\circ \ga -s_X}_{X,R}\le \de$.   Given an $R$-function system $\mbf X=(X,s_X)$, let $\age(\mbf X)$ be the collection of all finite-dimensional {\em $R$-function subsystems} $\mbf Y=(Y,s_Y)$  of $\mbf X$, that is, $Y\con X$ and $s_Y=s_X\rest Y$.  Given a class $\mc F$ of $R$-function systems,   let $[\mc F]$   be the class of all separable $R$-function systems $\mbf X=(X,s_X)$ such that for every $\mbf Y$ and every $\de>0$ there is $\mbf Z\in \mc F$ such that $\Emb_\de(\mbf Y, \mbf Z)\neq \buit$. 
Let $\mathrm{Aut}%
(X,s_{X})$ be the stabilizer of $s_{X}\in \mathrm{UC }\left( X,R\right) $ in 
$\Aut(X)$. Given a family $\mathcal{A}$ of function systems, let $\mathcal{A}%
^{R}$ be the collection of $R$-function systems $(X,s_{X})$ where $X\in 
\mathcal{A}$.

The following result is established in \cite[\S\S6.3]{lupini_fraisse_2016}.

\begin{proposition}
\label{Proposition:fraisse-state}Let  $R$ be a separable Lindenstrauss function system.  Then
the class $\mr{FdBa}^R $ of finite-dimensional $R$-function systems is a  stable Fra\"{\i}ss\'{e} class with stability modulus $\varpi (\delta )=2\delta $ and  $\posy_R:=(A(\mbb P),\Omega _{R})$ is its {\em Fraïssé limit}, that is, $\posy_R$ is a  stable Fra\"{\i}ss\'{e}   $R$-function
system such that $\age(\posy_R)=\mr{FdBa}^R$. 
\end{proposition}

As in the case of operator spaces, the $R$-state $\Omega _{ R}$ as in Proposition \ref{Proposition:fraisse-state} is called the \emph{%
generic} $R$-state on $A(\mbb P)$. This is the unique $R$-state on $A(\mbb P)$ whose $\mathrm{Aut}(A(\mbb P))$-orbit
is a dense $G_{\delta }$ subset of the space $\mathrm{UC}(A(\mbb P)
,R) $. The elements of the $\mathrm{Aut}(A(\mbb P))$-orbit of $\Omega
_{R}$ can be characterized as follows (see \cite[\S\S6.3]{lupini_fraisse_2016}.

\begin{proposition} \label{fffunctionals_bifaces} 

Suppose that $R$ is a separable Lindenstrauss function system.   A unital quotient map  $s:A(\mbb P)\to R$  belongs to the  $\aut(A(\mbb P)) $-orbit of $ \Om_R$ if and only if $s$ is a {\em unital facial quotient}, i.e., $s$ is unital and $s^*$ is an isometric embedding such that $s^*(\ball(R^*))$ is a  closed proper face of $\mbb P$. 
\end{proposition}

The intention is to prove the following 
\begin{theorem}
\label{Corollary:ea-osy}  For every    metrizable Choquet simplex $F$  the stabilizer $\mathrm{Aut}(\posy_{A(F)})$ of the generic $A(F)$-state $\Omega _{A(F)}$ on
the Poulsen system $A(\mathbb{P})$ is extremely amenable.
\end{theorem}

\subsection{Approximate Ramsey property and extreme amenability}\label{ui77wwwoooiii}

The following result provides a correspondence between extreme amenability and Ramsey properties in the context of $R$-function systems. The proof is analogous to the one for Banach spaces, and is left to the reader.

\begin{theorem}[KPT correspondence for (aUH)  and stable Fraïssé $R$-function systems]\label{liwjr3iwejirweaaa} Suppose that $\mbf X=(X,\Om)$ is an approximately ultrahomogeneous $R$-function system. Then the following are equivalent:
\begin{enumerate}[1)]
\item $\aut(\mbf X)$ is extremely amenable.
\item $\age(\mbf X)$  satisfies the approximate Ramsey property.
\end{enumerate}
If in addition $\mc F$ is  a family that satisfies the stable amalgamation property such that $E\in [\mc F]$ and   $\mc F\preceq \age(\mbf X)$, that is, every $R$-function system in $\mc F$ can be isometrically $R$-embedded  into $E$,  then the previous are  equivalent to 
\begin{enumerate}[1)]\addtocounter{enumi}{2}
\item   $\mc F$ satisfies the (SRP).
  \end{enumerate}

\end{theorem}

\begin{theorem}\label{Proposition:Ramsey-systems}
Suppose that $(R_k)_k$ is a sequence of function subsystems of $R$, each $R_k$ isometric to $\ell_\infty^k$, and with a dense union.  The following classes of $R$-function systems have the (SRP) with modulus $2\de$:
\begin{enumerate}[1)]
\item For every $k\in \N$ the class of $R_k$-function systems $(R_n\oplus_\infty R_k, \pi_n^{(k)})$, where $\pi_n^{(k)}: R_n\oplus_\infty R_k\to R_k$ is the canonical second projection.
\item  For every $k\in \N$ the class of $R_k$-function systems $(X,s)$ where $X$ is isometric to some $\ell_\infty^n$. 
\item The class $\mc B_R$ of $R$-function systems $(X,s)$ where $X$ is isometric to some $\ell_\infty^n$ and $s: X\to \bigcup_k R_k$.  
\item The class of all $R$-function systems. 

\end{enumerate}   

\end{theorem}  

To prove this Theorem we will use (and prove) the (ARP) of the class $\{\ell_\infty^n\}_n$ with respect to positive embeddings.  Its proof is similar to that of  Lemma  \ref{iojriowjrwe8855}. We present the details for the reader's
convenience. Let $\Emb^+(\ell_\infty^d,\ell_\infty^n)$ be the space of {\em positive} isometric embeddings from $\ell_\infty^d$ into $\ell_\infty^n$.  Dually, let $\mr{Quo}^+(\ell_1^n,\ell_1^d)$ be the space of corresponding positive quotient mappings. 
\begin{lemma}\label{bbnvjkyurie}
 For every $d,m,r\in \N$,  and $\vep>0$ there is some $n$ such that every  $r$-coloring of $\Emb^+(\ell_\infty^d,\ell_\infty^n)$ has an $\vep$-monochromatic set of the form $\ga\circ \Emb^+(\ell_\infty^d,\ell_\infty^n)$ for some $\ga\circ \Emb^+(\ell_\infty^d,\ell_\infty^m)$.    
\end{lemma}

 We write $\ball^+(\ell_1^k)$  and $\mr{Sph}^+(\ell_1^k)$  to denote the positive part of the unit ball  and of the unit sphere of  $\ell_1^k$, respectively.  
Recall that a  linear map $\ga :\ell _{\infty }^{d}\rightarrow \ell _{\infty
}^{n}$ is unital if  and only if its dual $\ga^{\ast }:\ell
_{1}^{n}\rightarrow \ell _{1}^{d}$ is trace-preserving, that is, $\Tr_d(\ga^*((a_j)_{j<n}))= \Tr_n((a_j)_{j<n})$, where $\Tr_k ((a_j)_{j<k}):= \sum_{j<k} a_j$ is the canonical    trace. When in addition $\ga$ (equiv. $\ga^*$) is a contraction, then $\ga$ and $\ga^*$ must be positive.
Thus $\ga$ is a unital isometric embedding    if and
only if $\ga ^{\ast }$ is a trace-preserving   
 quotient mapping, or equivalently  if each $\ga^*(u_j)$ belongs to    $\mr{Sph}^+(\ell_1^d)$   and $\{u_j\}_{j<d}\con \{\ga^*(u_j)\}_{j<n}$.    
Given $R$-function systems $(\ell_\infty^d,s)$ and $(\ell_\infty^n,t)$, let $\Quo((\ell_1^n,t^*),(\ell_1^d,s^*))$ be the space of trace-preserving quotients $\sig:\ell_1^n\to \ell_1^d$ such that $\sig\circ t^*=s^*$.  Before proving Lemma \ref{bbnvjkyurie}, we use it. 

\begin{proof}[Proof of Theorem \ref{Proposition:Ramsey-systems}.]
All the four classes considered have the stable amalgamation property with modulus $2\de$: For the first three ones,  the  proof of Claim \ref{766jkhhuur} can be easily adjusted to give the desired property, and for the last class, as we mentioned above, the proof can be found in \cite[\S\S6.3]{lupini_fraisse_2016}. So, we have to prove that in addition all that classes have the (ARP). 

{\it 1)}:  We consider the equivalent class, and easy to work with, $\{(\ell_\infty^{n+k},\pi_n)\}_n$, where $\pi_n^{(k)}: \ell_\infty^{n+k}=\ell_\infty^n\oplus_\infty \ell_\infty^k  \to \ell_\infty^k$ is the second projection.   We prove the dual approximate Ramsey statement for the corresponding dual class: Write $\mbf X_n$ to denote $(\ell_\infty^{n+k}, \pi_n^{(k)})$, and $\mbf X_n^*:=(\ell_1^{n+k}, (\pi_n^{(k)})^*)$. Notice that given $d$ and $m$,  we have that    $\sig\in\Quo(\mbf X_m^*,\mbf X_d^*)$ exactly when $\sig$ is a trace-preserving quotient   such that $\sig(u_{m+j})=u_{d+j}$ for all $j<k$.  We prove that for all $d,m,r\in \N$ and $\vep>0$ there is some $n$ such that every $r$-coloring of $\Quo(\mbf X_n^*,\mbf X_d^*)$ has a $\vep$-monochromatic set of the form $\Quo(\mbf X_m^*,\mbf X_d^*)\circ \ro$ for some $\ro\in \Quo(\mbf X_n^*,\mbf X_m^*)$. Fix parameters $d,m,r$ and $\vep$. We claim that   the number $n$ obtained by applying Lemma  \ref{bbnvjkyurie} to $d+k-1$, $m+k-1$, $r$,$\de=0$, and $\vep$ works.   For suppose that $c:\Quo(\mbf X_n^*,\mbf X_d^*)\to r$. We define the auxiliary coloring $\widehat{c}:\Quo^+(\ell_1^{n},\ell_1^{d+k-1})\to r$ by declaring $\widehat{c}(\sig):=c(\widehat{\sig})$, where $\widehat{\sig}\in \Quo(\mbf X_n^*,\mbf X_d^*)$ is  such that $\widehat{\sig}(u_j)= i_{d}(\sig(u_j)) + (1-\nrm{\sig(u_j)}_1) u_{d+k-1}$, for $i_{d}: \ell_1^{k+d-1}\to \ell_1^{(k+d}$ being the canonical embedding $i_d((a_j)_{j<k+d-1}):=(a_0,\dots,a_{k+d-2},0)$.   Notice that   
 $\nrm{\sig-\eta}\le \nrm{\widehat{\sig}-\widehat{\eta}}\le 2\nrm{\sig-\eta}$.
%
%
By the choice of $n$, and the dual version of Lemma  \ref{bbnvjkyurie},  we can find $\ro\in \Quo^+(\ell_1^n,\ell_1^{m+k-1})$ and $\widehat{r}<r$ be such that $\Quo^{+}(\ell_1^{m+k-1},\ell_1^{d+k-1})\circ \ro \con (\widehat{c}^{-1}(\widehat{r}))_{\vep}$.  Let $\widehat{\ro}\in \Quo( \mbf X_n^*, \mbf X_m^*)$ be defined linearly for $j<n$ by $\widehat{\ro}(u_j):= i_{m} (\ro( u_j))+ (1- \nrm{\ro(u_j)}_1) u_{m-1}$.  We claim that $\Quo( \mbf X_m^*,\mbf X_d^*)  \circ \widehat{\ro}\con c^{-1}(\widehat{r})_{ 2\vep}$, so to this end, we fix $\sig\in \Quo( \mbf X_m^*,\mbf X_d^*)$. Let $\underline{\sig}: \ell_1^{m+k-1}\to \ell_1^{d+k-1}$, $\underline{\sig}(a_0,\dots,a_{m+k-2})=\pi(\sig(a_0,\dots,a_{m+k-2},0))$, where $\pi(b_0,\dots, b_{d+k-2},b_{d+k-1})= (b_0,\dots, b_{d+k-2})$.   It follows that $\underline{\sig}\in \Quo^+(\ell_1^{m+k-1},\ell_1^{d+k-1})$ and that $\widehat{\underline{\sig}\circ \ro}= \sig \circ \widehat{\ro}$. Let $\eta \in \Quo^+(\ell_1^n,\ell_1^{d+k-1})$ be such that $\nrm{\underline{\sig}\circ \ro- \eta}\le \vep$ and $c(\widehat{\eta})=\widehat{r}$. Then,  $\nrm{\widehat{\underline{\sig}\circ \ro}-\widehat{\eta}}\le 2\vep$, or, equivalently, $\nrm{\sig \circ \widehat{\ro}- \widehat{\eta}}\le 2\vep$.     
 
 {\it 2)}:  We prove the (ARP) of the equivalent class of $\ell_\infty^k$-function systems  $(\ell_\infty^n, s)$ for some $n\in \N$. Fix $\mbf X=(\ell_\infty^d,s)$, $\mbf Y=(\ell_\infty^m,t)$, $r\in \N$ and $\vep>0$, and we use the (ARP) in {\it 1)} to these parameters to find the corresponding $n$. We claim that $\mbf X_n=(\ell_\infty^{n+k},\pi_n^{(k)})$ works.  Let $c:\Emb(\mbf X,\mbf X_n)\to r$, and let $\widehat{c}:\Emb(\mbf X_d,\mbf X_n)\to r$ be defined by $\widehat{c}(\ga):= c(\ga\circ i)$, where $i\in \Emb(\mbf X, \mbf X_d)$ is defined by $i(x):=(x,s(x))$.
 Let $\ga\in \Emb(\mbf X_m,\mbf X_n) $ and $\widehat{r}<r$ be such that $\ga\circ\Emb( \mbf X_d,\mbf X_m) \con (\widehat{c}^{-1}(\widehat{r}))_\vep$. Let $\ga_0:=\ga\circ j\in \Emb(\mbf Y, \mbf X_n)$ where $j\in \Emb(\mbf Y, \mbf X_m)$ is $j(y):=(y,t(y))$. Then, $\ga_0\circ \Emb(\mbf X,\mbf Y)\con (c^{-1}(\widehat{r}))_\vep$, because  given $\eta\in \Emb(\mbf X,\mbf Y)$, if we define $\eta_0\in \Emb(\mbf X_d,\mbf X_m)$ by $\eta_0(x,y)=(\eta(x),  y)$,  then  we have that   $\eta_0 \circ i= j\circ \eta$, hence,   $\ga_0 \circ \eta= \ga\circ j \circ \eta= (\ga\circ \eta_0)\circ i$, and   consequently $c(\ga_0\circ \eta)=\widehat{c}(\ga \circ \eta_0)$. 
 
 {\it 3)} 
 follows trivially from {\it 2)}. {\it 4)} follows from {\it 3)} and the next. 
 \clam 
For every finite-dimensional $R$-function system $\mbf X$ and every $\de$ there is some $\mbf Y\in \mc B_R$ such that $\Emb_\de(\mbf X,\mbf Y)\neq \buit$. 
\fclam
\prucl
Since a function space is a unital closed subspace of some function system, it follows, for example from the existence of partitions of unity, that for every finite-dimensional function system $X$ and every $\de$ there is some $n$ and some unital $\de$-embedding $\ga: X\to \ell_\infty^n$. If in addition $s: X\to R$ is a unital contraction, then there must be some $k$ and some unital contraction $t: X\to R_k$ such that $\nrm{s-t}\le \de$. Being $\ell_\infty^k$ an injective function system,  we can find a unital contraction $u:\ell_\infty^n\to \ell_\infty^k$ such that $\nrm{u\circ  \ga-t}\le \de$, and consequently, $\ga\in \Emb_{2\de}((X,s), (\ell_\infty^n, u))$.  
\fprucl

\end{proof}

\begin{proof}[Proof of Lemma \ref{bbnvjkyurie}.] 
The proof of the dual form of this statement is that of Lemma \ref{iojriowjrwe8855}  with the natural modifications that we pass to sketch: Fix $d,m,r$ and $\vep$. 
First of all, fix  a finite $\vep$-dense subset $\mc D$ of $\ball^+(\ell_1^d)$ containing $\{u_j\}_{j<d}$ such that for every non-zero $x\in \ball^+(\ell_1^d)$ there is $y\in \mc D$ such that $\nrm{y-x}_1\le \vep$ and $\nrm{y}_1<\nrm{x}_1$. Let $\emb(d,m)$ be the collection of $1-1$ mappings from $d$ into $m$, and for each such mapping $f$, let $h_f: \ell_1^d\to \ell_1^m$ be the positive isometry sending $u_j$ to $u_{f(j)}$. Observe that for each  positive quotient mapping $\sig \in \Quo^+(\ell_1^m,\ell_1^d)$ there is some $f\in \emb(d,m)$ such that $\sig \circ h_f =\id_{\ell_1^d}$.  Let $\De:=\mc D \times \emb(d,m)$.  We linearly order $\mc D$  by $\prec$ in a way that if $\nrm{x}_1<\nrm{y}_1$, then $x\prec y$,  $\emb(d,m)$ arbitrarily, and then we consider $\De$ lexicographically ordered. Then $n:=  \mathbf{GR}(|\mc D|, |\De|,r)$ works.  Given $c:\Quo^+(\ell_1^n, \ell_1^d)\to r$ one defines $\widehat{c}: \mr{Epi}(n, \mc D)\to r$, $\widehat{c}(\sig):= c(\Phi(\sig))$ where $\Phi(\sig)(u_j):= \sig(j)$ for $j<n$. Let $\ro\in \mr{Epi}(n,\De)$ and $\widehat{r}<r$ be such that $ \mr{Epi}(\De,\mc D)\circ \ro$ is monochromatic with color $\widehat{r}$. Let $e\in \Quo^+(\ell_1^n,\ell_1^m)$ be linearly defined by $e(u_j):= h_f(v)$, where $(v,f)=\ro(j)$.  Then it can be shown as in the proof of Lemma \ref{iojriowjrwe8855}  that
for every $ T\in \Quo^+(\ell_1^m,\ell_1^d)$ there is some $\sig\in \mr{Epi}(\De, \mc D)$ such that $\nrm{\Phi(\sig\circ \ro)- T\circ e}\le \vep$, and consequently,  $\Quo^+(\ell_1^m,\ell_1^d)\circ e\con (c^{-1}(j))_{\vep}$. 
\end{proof}
From Theorem \ref{Proposition:Ramsey-systems} and the (KPT) correspondence in Theorem \ref{liwjr3iwejirweaaa}  we obtain the following.

\begin{reptheorem}{Corollary:ea-osy} 
 For every   any metrizable Choquet simplex $F$  the stabilizer $\mathrm{Aut}(\posy_{A(F)})$ of the generic $A(F)$-state $\Omega _{A(F)}$ on
the Poulsen system $A(\mathbb{P})$ is extremely amenable. \qed

\end{reptheorem}

We rephrase  {\it 1)} of Theorem   \ref{Proposition:Ramsey-systems}   geometrically. We
identify the $n$-dimensional standard simplex $\Delta _{n}$ with the state
space $S(\ell _{\infty }^{n+1})\subset \ell _{1}^{n+1}$. Let $\mathrm{Epi}%
(\Delta _{n},\Delta _{d})$ be the space of surjective continuous affine maps
from $\Delta _{n}$ to $\Delta _{d}$ endowed with the metric $d(\phi ,\psi
)=\sup_{p\in \Delta _{n}}\left\Vert \phi (p)-\psi (p)\right\Vert _{\ell
_{1}^{d}}$. We also let $\mathrm{Epi}_{0}(\Delta _{n},\Delta _{d})$ be the
subspace of $\phi \in \mathrm{Epi}(\Delta _{n},\Delta _{d})$ such that $\phi
(u_n)=u_d$. One can (isometrically) identify $\mathrm{Epi}(\Delta
_{n},\Delta _{d})$ isometrically with the space of trace-preserving quotients from $\ell_1^n$ onto $\ell_1^d$, and the space $\mathrm{Epi}_{0}(\Delta _{n},\Delta _{d})$ with $\Quo((\ell_1^n, \pi_n^{(1)}),(\ell_1^d, \pi_n^{(1)}))$. The following statement is
therefore equivalent to Theorem \ref{Proposition:Ramsey-systems} for $k=1$ and isometric embeddings.

\begin{corollary}
For any $d,m,r\in \mathbb{N}$ and $\varepsilon >0$ there exists $n\in 
\mathbb{N}$ such that for any $r$-coloring of the space $\mathrm{Epi}%
_{0}(\Delta _{n},\Delta _{d})$ there exists $\gamma \in \mathrm{Epi}%
_{0}(\Delta _{n},\Delta _{m})$ such that $\mathrm{Epi}_{0}(\Delta
_{m},\Delta _{d})\circ \gamma $ is $\varepsilon $-monochromatic. \qed
\end{corollary}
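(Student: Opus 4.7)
The plan is to show that the corollary reduces to the statement of Proposition \ref{Proposition:Ramsey-systems} specialized at $q=1$, via a direct identification of the combinatorial objects. First I would set $q=1$ in Proposition \ref{Proposition:Ramsey-systems}, so that $M_{q}=T_{q}=\mathbb{C}$ and $\ell^{d}_{1}(T_{q})$ becomes just $\ell^{d}_{1}$. In this setting a linear map $\phi:\ell^{n}_{1}\to\ell^{d}_{1}$ lies in $\mathrm{TPCQ}^{M_{1}}(\ell^{n}_{1},\ell^{d}_{1})$ precisely when (i) it is completely positive, (ii) it is trace-preserving, i.e.\ maps the simplex $\Delta_{n-1}\subseteq\ell^{n}_{1}$ into $\Delta_{d-1}$, (iii) it is a complete quotient mapping, which in the commutative case $q=1$ is equivalent to ordinary surjectivity of the induced map $\Delta_{n-1}\to\Delta_{d-1}$, and (iv) it fixes the last standard basis vector.

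Next I would set up the isometric identification between $\mathrm{Epi}(\Delta_{n-1},\Delta_{d-1})$ and the space of trace-preserving contractive quotient mappings $\ell^{n}_{1}\to\ell^{d}_{1}$. Every $\phi\in\mathrm{Epi}(\Delta_{n-1},\Delta_{d-1})$ extends uniquely by linearity to a completely positive map $\widetilde{\phi}:\ell^{n}_{1}\to\ell^{d}_{1}$ with $\widetilde{\phi}(e_{i})=\phi(e_{i})\in\Delta_{d-1}$; positivity and the constraint $\widetilde{\phi}(e_{i})\in\Delta_{d-1}$ make it trace-preserving, and surjectivity of $\phi$ on $\Delta_{d-1}$ translates, in the commutative setting, to $\widetilde{\phi}$ being a contractive quotient map. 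Restricting to $\mathrm{Epi}_{0}$ corresponds exactly to the additional constraint $\widetilde{\phi}(e_{n})=e_{d}$, which is the condition $\phi(0,\ldots,0,x)=(0,\ldots,0,x)$ singled out in the definition of $\mathrm{TPCQ}^{M_{1}}$. Under this identification composition of affine maps corresponds to composition of their linear extensions, so the factorization set $\mathrm{Epi}_{0}(\Delta_{m-1},\Delta_{d-1})\circ\gamma$ is sent to $\mathrm{TPCQ}^{M_{1}}(\ell^{m}_{1},\ell^{d}_{1})\circ\widetilde{\gamma}$.

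I would then verify that the metric $d(\phi,\psi)=\sup_{p\in\Delta_{n-1}}\|\phi(p)-\psi(p)\|_{\ell^{d}_{1}}$ used on $\mathrm{Epi}(\Delta_{n-1},\Delta_{d-1})$ agrees under this identification with the completely bounded distance $\|\widetilde{\phi}-\widetilde{\psi}\|_{\mathrm{cb}}$ used in Proposition \ref{Proposition:Ramsey-systems}. Since $\ell^{n}_{1}$ equipped with its predual structure to the abelian C*-algebra $\ell^{n}_{\infty}$ is a minimal operator space, and differences of trace-preserving maps into $\ell^{d}_{1}$ have their operator norm attained on the extreme points $e_{i}$ of $\Delta_{n-1}$, the cb-norm and the sup-over-simplex norm coincide. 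Consequently $\varepsilon$-monochromaticity with respect to one metric is the same as with respect to the other.

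Having established these dictionary translations, the corollary is immediate: given $d,m,r$ and $\varepsilon>0$, apply Proposition \ref{Proposition:Ramsey-systems} with parameters $q=1$, $d$, $m$ replaced by $d-1$, $m-1$ (after the obvious reindexing from $\Delta_{n}$ to $\Delta_{n-1}$), to obtain an integer $n$; then any $r$-coloring of $\mathrm{Epi}_{0}(\Delta_{n},\Delta_{d})$ pulls back to an $r$-coloring of $\mathrm{TPCQ}^{M_{1}}(\ell^{n+1}_{1},\ell^{d+1}_{1})$, and the $\varepsilon$-monochromatic factorization set produced there pushes forward to the required one. There is no substantial obstacle; the only point requiring care is the bookkeeping of the distinguished coordinate (the role of the last basis vector) and the verification that $q=1$ versions of the cb-norm coincide with the natural simplex metric, both of which are routine.
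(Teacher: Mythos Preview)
Your proposal is correct and follows exactly the approach in the paper: the corollary is stated immediately after an explanatory paragraph asserting the isometric identification of $\mathrm{Epi}_{0}(\Delta_{n},\Delta_{d})$ with $\mathrm{TPCQ}^{M_{1}}(\ell_{1}^{n},\ell_{1}^{d})$, and the paper simply records the corollary as the $q=1$ instance of Proposition \ref{Proposition:Ramsey-systems} with no further argument. Your write-up supplies more detail on the metric identification and the bookkeeping of the distinguished vertex than the paper does, but the route is identical.
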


\subsection{Closed faces of the Poulsen simplex\label{Subsection:faces}}

Theorem  \ref{Corollary:ea-osy} can be restated geometrically in terms of a
property of the Poulsen simplex. The Poulsen simplex $\mathbb{P}$ has the
following universality and homogeneity property for faces: any metrizable
Choquet simplex is affinely homeomorphic to a closed proper face of $\mathbb{%
P}$, and any affine homeomorphism between closed proper faces of $\mathbb{P}$
extends to an affine homeomorphism of $\mathbb{P}$ \cite[Theorem 2.3 and
Theorem 2.5]{lindenstrauss_poulsen_1978}. This can be seen as a consequence
of the following geometric version of Proposition \ref{fffunctionals_bifaces}.  

\begin{proposition}
\label{states_faces} Let $F$ be a metrizable Choquet simplex, and let $s:A(\mbb P)\to A(F)$ be a unital quotient. The following
assertions are equivalent:

\begin{enumerate}[1)]
\item $s$ belongs to the $\mathrm{Aut}(\mathbb{P})$-orbit of the generic $%
A(F)$-state $\Omega_{A(F)}$.

\item There is a closed proper face $\bar{F}$ of $\mathbb{P}$ affinely
homeomorphic to $F$   such that $s$ is the restriction map $A(\mathbb{P})\rightarrow A(%
\bar{F})$, $f\mapsto f\upharpoonright _{\bar{F}}$.
\end{enumerate}
In particular, the $\mathrm{Aut}(\mathbb{P})$-orbit of the generic state $%
\Omega _{\mathbb{R}}:A(\mathbb{P})\rightarrow \mathbb{R}$ is
the set of extreme points of $\mathbb{P}$.
\end{proposition}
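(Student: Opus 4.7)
The plan is to combine Kadison duality with the existence, uniqueness, and homogeneity of the Fra\"{\i}ss\'{e} limit in $\mathtt{Osy}^{A(F)}$ provided by Proposition \ref{Proposition:fraisse-state}. By Kadison duality, an $A(F)$-state $s:A(\mathbb{P})\to A(F)$ is equivalent to a continuous affine map $s^{\ast }:F\to \mathbb{P}$, and $s$ is a complete quotient mapping (equivalently, a restriction to a closed set) precisely when $s^{\ast }$ is injective with closed face image. So condition \textit{(2)} reads $s=\iota _{\bar{F}}^{\ast }$ for some closed proper face $\bar{F}\subseteq \mathbb{P}$ affinely homeomorphic to $F$. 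Let $\Sigma $ be the set of such restriction maps. A direct computation from the convention $(\alpha \cdot s)(f)=s(\alpha ^{-1}f)$ gives $\alpha \cdot \iota _{\bar{F}}^{\ast }=\iota _{\hat{\alpha}(\bar{F})}^{\ast }$, so $\Sigma $ is $\mathrm{Aut}(A(\mathbb{P}))$-invariant. Universality of $\mathbb{P}$ among metrizable Choquet simplices \cite{lindenstrauss_poulsen_1978} gives $\Sigma \ne \emptyset $, and homogeneity of $\mathbb{P}$ for its closed proper faces (same reference) gives that $\Sigma $ is a single $\mathrm{Aut}(A(\mathbb{P}))$-orbit.

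The remaining task is to show that $\Sigma $ coincides with the $\mathrm{Aut}(A(\mathbb{P}))$-orbit of $\Omega _{A(\mathbb{P})}^{A(F)}$. I would do this by proving that, for every $s\in \Sigma $, the $A(F)$-operator system $(A(\mathbb{P}),s)$ is itself a Fra\"{\i}ss\'{e} limit of $\langle \{\ell _{\infty }^{n}\}_{n}\rangle ^{A(F)}$, i.e.\ stably homogeneous in $\mathtt{Osy}^{A(F)}$ with modulus $\varpi (\delta )=4\delta $. Once established, the uniqueness part of Proposition \ref{Proposition:exists} combined with Proposition \ref{Proposition:fraisse-state} produces an isomorphism of $A(F)$-operator systems $(A(\mathbb{P}),s)\cong (A(\mathbb{P}),\Omega _{A(\mathbb{P})}^{A(F)})$, i.e.\ an $\alpha \in \mathrm{Aut}(A(\mathbb{P}))$ with $s=\Omega _{A(\mathbb{P})}^{A(F)}\circ \alpha $, placing $s$ in the desired orbit. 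This yields \textit{(1)} $\Leftrightarrow $ \textit{(2)}.

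To verify the stable homogeneity, note that $\mathrm{Aut}^{A(F)}(A(\mathbb{P}),s)$ coincides with the pointwise stabilizer of $\bar{F}$ inside $\mathrm{Aut}(\mathbb{P})$. Given a finite-dimensional $X\subseteq A(\mathbb{P})$ and two $\delta $-$A(F)$-embeddings $\phi ,\psi :X\to A(\mathbb{P})$, the strategy is: (i) apply the Choi--Effros lifting theorem \cite{choi_lifting_1977} to replace $\phi $ and $\psi $ by genuine $A(F)$-embeddings, at cost controlled by $\varpi (\delta )$; (ii) use the approximate ultrahomogeneity of $A(\mathbb{P})$ as an operator system (Proposition \ref{Proposition:exists} applied to the class of finite-dimensional function systems) to produce $\alpha \in \mathrm{Aut}(A(\mathbb{P}))$ with $\alpha \circ \psi \approx \phi $; and (iii) invoke the $M$-ideal structure of the annihilator of $\bar{F}$ in $A(\mathbb{P})^{\ast }$ \cite{alfsen_structure_1972-1,alfsen_structure_1972-2} to modify $\alpha $ within its coset modulo the pointwise stabilizer of $\bar{F}$, yielding an automorphism that both approximates $\phi $ on $X$ and fixes $\bar{F}$ pointwise. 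The main obstacle is step (iii): simultaneously approximating on $X$ and fixing $\bar{F}$ requires the $M$-ideal splitting of $A(\mathbb{P})^{\ast }$ coming from $\bar{F}$, which furnishes the projection onto measures supported on $\bar{F}$ needed for the perturbation to land in the stabilizer. Finally, the ``in particular'' assertion follows by specializing to $F=\{\ast \}$: the closed proper faces of $\mathbb{P}$ affinely homeomorphic to a point are precisely the singletons $\{p\}$ with $p\in \partial _{e}\mathbb{P}$, and the corresponding restriction maps are point-evaluations at extreme points.
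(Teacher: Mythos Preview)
The paper does not prove this proposition; it is stated as a fact established in \cite[Section 6.3]{lupini_fraisse_2016}, using precisely the ingredients you mention (the Alfsen--Effros $M$-ideal theory for closed faces and the Choi--Effros lifting theorem). Your overall strategy---showing that for any restriction map $s$ to a closed proper face the pair $(A(\mathbb{P}),s)$ is itself a Fra\"{\i}ss\'{e} limit of $\langle\{\ell_\infty^n\}_n\rangle^{A(F)}$, then invoking uniqueness---is the correct one and matches the cited reference. Your reduction of the ``in particular'' clause to the case $F=\{\ast\}$ is also fine.

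There is, however, a genuine gap in your step (iii). The phrase ``modify $\alpha$ within its coset modulo the pointwise stabilizer of $\bar{F}$'' cannot produce an element of the stabilizer unless $\alpha$ was already there, since replacing $\alpha$ by another element of the same coset preserves coset membership. More substantively, the argument does not proceed by first finding an arbitrary $\alpha\in\mathrm{Aut}(A(\mathbb{P}))$ via plain ultrahomogeneity and then correcting it to fix $\bar{F}$; there is no reason such a correction should be possible while keeping $\alpha|_{\psi(X)}$ approximately fixed, since $\alpha$ may move $\bar{F}$ far away. Instead, one verifies directly that $(A(\mathbb{P}),s)$ satisfies the approximate \emph{extension property} in $\mathtt{Osy}^{A(F)}$: given finite-dimensional $(X,s_X)\subseteq(Y,s_Y)$ and an $A(F)$-embedding of $(X,s_X)$ into $(A(\mathbb{P}),s)$, one extends it to an approximate $A(F)$-embedding of $(Y,s_Y)$. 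Here the $M$-ideal property of $\ker(s)$---equivalently, that the annihilator of the face $\bar{F}$ is an $L$-summand in $A(\mathbb{P})^\ast$---is used to correct an arbitrary unital extension $Y\to A(\mathbb{P})$ so that it is compatible with the $A(F)$-structure, without spoiling the isometric embedding. Stable homogeneity then follows from this extension property by the usual back-and-forth, not by perturbing a pre-existing automorphism.
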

Hence, Theorem  \ref{Corollary:ea-osy}   can be reformulated as follows.

\begin{theorem}
\label{Theorem:faces}Suppose that $F$ is a closed proper face of the Poulsen
simplex $\mathbb{P}$. Then the pointwise stabilizer of $F$ with respect to
the canonical action $\mathrm{Aut}(\mathbb{P})\curvearrowright \mathbb{P}$
is extremely amenable.\qed 
\end{theorem}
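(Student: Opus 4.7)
The plan is to reduce Theorem \ref{Theorem:faces} directly to Corollary \ref{Corollary:ea-osy}(1) via the Kadison correspondence and the orbit description supplied by Proposition \ref{states_faces}. First, identify $\mathrm{Aut}(\mathbb{P})$ with $\mathrm{Aut}(A(\mathbb{P}))$ through the assignment $\alpha \mapsto \phi_\alpha$, where $\phi_\alpha(f) := f \circ \alpha^{-1}$; this is a topological group isomorphism. Since any closed face of a metrizable Choquet simplex is itself a metrizable Choquet simplex, the function system $A(F)$ is a nuclear (indeed Lindenstrauss) function system, and the restriction map $s_F \colon A(\mathbb{P}) \to A(F)$, $f \mapsto f\upharpoonright_{F}$, is a unital complete contraction, i.e., an $A(F)$-state on $A(\mathbb{P})$. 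By Proposition \ref{states_faces}, $s_F$ lies in the $\mathrm{Aut}(A(\mathbb{P}))$-orbit of the generic state $\Omega_{A(\mathbb{P})}^{A(F)}$.

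Next, I verify that under the Kadison identification above, $\mathrm{Aut}_F(\mathbb{P})$ coincides with the stabilizer of $s_F$ in $\mathrm{Aut}(A(\mathbb{P}))$ with respect to the canonical action on $\mathrm{UCP}(A(\mathbb{P}), A(F))$ defined by $(\phi,s) \mapsto s \circ \phi^{-1}$. Indeed $\phi_\alpha$ stabilizes $s_F$ precisely when $s_F \circ \phi_{\alpha^{-1}} = s_F$, i.e., $f(\alpha(p)) = f(p)$ for every $f \in A(\mathbb{P})$ and every $p \in F$; since $A(\mathbb{P})$ separates the points of $\mathbb{P}$, this is equivalent to $\alpha(p) = p$ for all $p \in F$, that is, to $\alpha \in \mathrm{Aut}_F(\mathbb{P})$.

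Finally, since $s_F$ and $\Omega_{A(\mathbb{P})}^{A(F)}$ lie in the same $\mathrm{Aut}(A(\mathbb{P}))$-orbit, their stabilizers are conjugate subgroups of $\mathrm{Aut}(A(\mathbb{P}))$, hence topologically isomorphic as Polish groups. Extreme amenability is preserved under topological isomorphism, and Corollary \ref{Corollary:ea-osy}(1) states that the stabilizer of $\Omega_{A(\mathbb{P})}^{A(F)}$ is extremely amenable; therefore so is $\mathrm{Aut}_F(\mathbb{P})$. The only point demanding any care is the Kadison bookkeeping relating affine self-maps of $\mathbb{P}$ to unital self-maps of $A(\mathbb{P})$; all the genuine work --- the approximate Ramsey property for $\mathbb{I}_{1}^{A(F)}$ in Theorem \ref{Theorem:ARP-R-systems}(1), and its transfer to extreme amenability through Proposition \ref{Proposition:ARP} --- is already contained in the preceding material.
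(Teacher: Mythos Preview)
Your proposal is correct and follows exactly the route the paper takes: the paper states that Theorem~\ref{Theorem:faces} is a reformulation of Corollary~\ref{Corollary:ea-osy}(1) via Proposition~\ref{states_faces}, and you have simply written out that reformulation in full detail (the Kadison identification, the verification that $\mathrm{Aut}_F(\mathbb{P})$ is the stabilizer of the restriction state $s_F$, and the conjugacy of stabilizers within an orbit). One minor slip: the approximate Ramsey property you invoke for $\mathbb{I}_1^{A(F)}$ is item~(2) of Theorem~\ref{Theorem:ARP-R-systems} (with $q=1$), not item~(1).
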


%
\subsection{The universal minimal flows of $\mathbb{P}$  }\label{oi5t8t895466655}

Using Theorem \ref{Theorem:faces} we
can compute the universal minimal flow of the affine homeomorphism group $%
\mathrm{Aut}(\mathbb{P})$ of the Poulsen simplex. 
\begin{theorem}
\label{Theorem:flows} The universal minimal flow of $\mathrm{Aut}( \mathbb{P}) $ is the
canonical action $\mathrm{Aut}( \mathbb{P}) \curvearrowright \mathbb{P}$.
 
\end{theorem}

\begin{proof}
The action $\mathrm{Aut}(\mathbb{P})\curvearrowright 
\mathbb{P}$ is minimal by a result of Glasner from \cite{glasner_distal_1987}%
. This can be seen directly using the homogeneity property of $A(\mathbb{P})$
and the fact that for any $\varepsilon >0$ and $d\in \mathbb{N}$ there
exists $m\in \mathbb{N}$ such that for any state $s$ on $\ell _{\infty }^{d}$
and $t$ on $\ell _{\infty }^{m}$ there exists a unital linear isometry $\phi
:\ell _{\infty }^{d}\rightarrow \ell _{\infty }^{m}$ such that $\left\Vert
t\circ \phi -s\right\Vert <\varepsilon $. Consider the generic state $\Omega
_ {\mathbb{R}}$ on $A(\mathbb{P})$. We know from Proposition %
\ref{states_faces} that the state $\Omega _ {\mathbb{R}}$ is
an extreme point of $\mathbb{P}$, whose $\mathrm{Aut}(\mathbb{P})$-orbit is
dense. The stabilizer $\mathrm{Aut}(\posy_ {%
\mathbb{R}}) $ of $\Omega _ {\mathbb{R}}$ is extremely
amenable by Theorem \ref{Corollary:ea-osy}. The canonical $\mathrm{Aut}(%
\mathbb{P})$-equivariant map from the quotient $\mathrm{Aut}(\mathbb{P})$%
-space $\mathrm{Aut}(\mathbb{P})\quo\mathrm{Aut}(\posy_ {\mathbb{R}})$ to $\mathbb{P}$ is a uniform
equivalence. This follows from the homogeneity property of $\posy _ {\mathbb{R}}$ as the Fra\"{\i}ss\'{e} limit of
the class of finite-dimensional function systems with a distinguished state;
see also \cite[Subsection 5.4]{lupini_fraisse_2016}. This allows one to
conclude via a standard argument---see \cite[Theorem 1.2]%
{melleray_polish_2016}---that the action $\mathrm{Aut}(\mathbb{P}%
)\curvearrowright \mathbb{P}$ is the universal minimal compact $\mathrm{Aut}(%
\mathbb{P})$-space.
\end{proof}
The universal minimal flows for the non-commutative versions of the Poulsen simplex have been computed in \cite{bartosova_ramsey_2017}.
It has recently been proved in \cite{ben_yaacov_metrizable_2017} that the
situation in Theorem \ref{Theorem:flows} is typical. Whenever $G$ is a
Polish group whose universal compact $G$-space $M( G) $ is metrizable, there
exists a closed extremely amenable subgroup $H$ of $G$ such that the
completion of the homogeneous quotient $G$-space $G/H$ is $G$-equivariantly
homeomorphic to $M(G) $.


\bibliographystyle{amsplain}
\bibliography{bibliography}

\def\cprime{$'$}
\providecommand{\bysame}{\leavevmode\hbox to3em{\hrulefill}\thinspace}
\providecommand{\MR}{\relax\ifhmode\unskip\space\fi MR }
\providecommand{\MRhref}[2]{%
  \href{http://www.ams.org/mathscinet-getitem?mr=#1}{#2}
}
\providecommand{\href}[2]{#2}
\begin{thebibliography}{10}

\bibitem{alfsen_compact_1971}
Erik~M. Alfsen, \emph{Compact convex sets and boundary integrals},
  Springer-Verlag, New York-Heidelberg, 1971.

\bibitem{alfsen_structure_1972-1}
Erik~M. Alfsen and Edward~G. Effros, \emph{Structure in real {Banach} spaces.
  {I}}, Annals of Mathematics. Second Series \textbf{96} (1972), 98--128.

\bibitem{alfsen_structure_1972-2}
\bysame, \emph{Structure in real {Banach} spaces. {II}}, Annals of Mathematics.
  Second Series \textbf{96} (1972), 129--173.

\bibitem{bartosova_ramsey_2017}
Dana Barto\v{s}ov\'{a}, Jordi Lopez-Abad, Martino Lupini, and Brice Mbombo,
  \emph{The {R}amsey property for operator spaces and operator systems},
  preprint, 2017.

\bibitem{bartosova_ramsey_2019}
Dana Barto\v{s}ov\'{a}, Jordi L\'{o}pez-Abad, Martino Lupini, and Brice Mbombo,
  \emph{The {R}amsey properties {G}rassmannians over {$\mathbb R$}}, in
  preparation, 2019.

\bibitem{ben_yaacov_fraisse_2015}
Ita{\"{i}} Ben~Yaacov, \emph{Fra{\"{i}}ss{\'{e}} limits of metric structures},
  Journal of Symbolic Logic \textbf{80} (2015), no.~1, 100--115.

\bibitem{ben_yaacov_model_2008}
Ita{\"i} Ben~Yaacov, Alexander Berenstein, C.~Ward Henson, and Alexander
  Usvyatsov, \emph{Model theory for metric structures}, Model theory with
  applications to algebra and analysis. Vol. 2, London Mathematical Society
  Lecture Note Series, vol. 350, Cambridge University Press, 2008,
  pp.~315--427.

\bibitem{ben_yaacov_metrizable_2017}
Ita{\"{i}} Ben~Yaacov, Julien Melleray, and Todor Tsankov, \emph{Metrizable
  universal minimal flows of {P}olish groups have a comeagre orbit}, Geometric
  and Functional Analysis \textbf{27} (2017), no.~1, 67--77.

\bibitem{choi_lifting_1977}
Man~Duen Choi and Edward~G. Effros, \emph{Lifting problems and the cohomology
  of {C}*-algebras}, Canadian Journal of Mathematics \textbf{29} (1977), no.~5,
  1092--1111.

\bibitem{conley_fraisse_2017}
Clinton Conley and Asger T\"ornquist, \emph{A {F}ra{\"{i}}ss{\'{e}} approach to
  the {P}oulsen simplex}, Sets and Computations, World Scientific, 2017,
  pp.~11--24.

\bibitem{eagle_fraisse_2014}
Christopher~J. Eagle, Ilijas Farah, Bradd Hart, Boris Kadets, Vladyslav
  Kalashnyk, and Martino Lupini, \emph{{Fra\"{\i}ss\'{e}} limits of
  {C}*-algebras}, Journal of Symbolic Logic, in press.

\bibitem{effros_class_1974}
Edward~G. Effros, \emph{On a class of complex {Banach} spaces}, Illinois
  Journal of Mathematics \textbf{18} (1974), 48--59.

\bibitem{ellis_universal_1960}
Robert Ellis, \emph{Universal minimal sets}, Proceedings of the American
  Mathematical Society \textbf{11} (1960), 540--543.

\bibitem{ferenczi_amalgamation_2019}
Valentin Ferenczi, Jordi Lopez-Abad, Brice Mbombo, and Stevo Todorcevic,
  \emph{{A}magamation and {R}amsey properties of {$L_p$}-spaces}, Preprint,
  2019.

\bibitem{giordano_extremely_2007}
Thierry Giordano and Vladimir Pestov, \emph{Some extremely amenable groups
  related to operator algebras and ergodic theory}, Journal of the Institute of
  Mathematics of Jussieu \textbf{6} (2007), no.~02, 279--315.

\bibitem{glasner_minimal_2002}
Eli Glasner and Benjamin Weiss, \emph{Minimal actions of the group
  {$\mathbb{S}(\mathbb{Z})$} of permutations of the integers}, Geometric and
  Functional Analysis \textbf{12} (2002), no.~5, 964--988.

\bibitem{glasner_universal_2003}
\bysame, \emph{The universal minimal system for the group of homeomorphisms of
  the {Cantor} set}, Fundamenta Mathematicae \textbf{176} (2003), no.~3,
  277--289.

\bibitem{glasner_distal_1987}
Shmuel Glasner, \emph{Distal and semisimple affine flows}, American Journal of
  Mathematics \textbf{109} (1987), no.~1, 115--131.

\bibitem{goldbring_model-theoretic_2015}
Isaac Goldbring and Martino Lupini, \emph{Model-theoretic aspects of the
  {G}urarij operator space}, Israel Journal of Mathematics, in press.

\bibitem{goldbring_kirchbergs_2015}
Isaac Goldbring and Thomas Sinclair, \emph{On {Kirchberg}'s embedding problem},
  Journal of Functional Analysis \textbf{269} (2015), no.~1, 155--198.

\bibitem{gowers_lipschitz_1992}
Timothy Gowers, \emph{Lipschitz functions on classical spaces}, European
  Journal of Combinatorics \textbf{13} (1992), no.~3, 141--151.

\bibitem{graham_ramseys_1972}
Ronald~L. Graham, Klaus Leeb, and Bruce~L. Rothschild, \emph{Ramsey's theorem
  for a class of categories}, Proceedings of the National Academy of Sciences
  of the United States of America \textbf{69} (1972), 119--120.

\bibitem{graham_ramseys_1971}
Ronald~L. Graham and Bruce~L. Rothschild, \emph{Ramsey's theorem for
  {$n$}-parameter sets}, Transactions of the American Mathematical Society
  \textbf{159} (1971).

\bibitem{gromov_topological_1983}
Mikhael Gromov and Vitali~D. Milman, \emph{A topological application of the
  isoperimetric inequality}, American Journal of Mathematics \textbf{105}
  (1983), no.~4, 843--854.

\bibitem{gurarij_spaces_1966}
Vladimir~I. Gurari{\u\i}, \emph{Spaces of universal placement, isotropic spaces
  and a problem of {M}azur on rotations of {B}anach spaces}, Siberian
  Mathematical Journal \textbf{7} (1966), 1002--1013.

\bibitem{gutman_new_2013}
Yonatan Gutman and Hanfeng Li, \emph{A new short proof for the uniqueness of
  the universal minimal space}, Proceedings of the American Mathematical
  Society \textbf{141} (2013), no.~1, 265--267.

\bibitem{irwin_projective_2006}
Trevor Irwin and S{\l}awomir Solecki, \emph{Projective {Fra\"{i}ss\'{e}} limits
  and the pseudo-arc}, Transactions of the American Mathematical Society
  \textbf{358} (2006), no.~7, 3077--3096.

\bibitem{kechris_fraisse_2005}
Alexander~S. Kechris, Vladimir Pestov, and Stevo Todorcevic,
  \emph{Fra{\"i}ss{\'e} limits, {R}amsey theory, and topological dynamics of
  automorphism groups}, Geometric and Functional Analysis \textbf{15} (2005),
  no.~1, 106--189.

\bibitem{kubis_lelek_2015}
Wies{\l}aw Kubi{\'{s}} and Aleksandra Kwiatkowska, \emph{The {Lelek} fan and
  the {Poulsen} simplex as {Fra\"{i}ss\'{e}} limits}, arXiv:1512.09252 (2015).

\bibitem{kubis_proof_2013}
Wies{\l}aw Kubi{\'s} and S{\l}awomir Solecki, \emph{A proof of uniqueness of
  the {G}urari\u\i\ space}, Israel Journal of Mathematics \textbf{195} (2013),
  no.~1, 449--456.

\bibitem{lazar_unit_1972}
Aldo~J. Lazar, \emph{The unit ball in conjugate {$L_1$} spaces}, Duke
  Mathematical Journal \textbf{39} (1972), 1--8.

\bibitem{lazar_banach_1971}
Aldo~J. Lazar and Joram Lindenstrauss, \emph{Banach spaces whose duals are
  {$L_1$} spaces and their representing matrices}, Acta Mathematica
  \textbf{126} (1971), no.~1, 165--193.

\bibitem{lindenstrauss_poulsen_1978}
Joram Lindenstrauss, Gunnar Olsen, and Yaki Sternfeld, \emph{The {Poulsen}
  simplex}, Annales de l'Institut Fourier \textbf{28} (1978), no.~1, 91--114.

\bibitem{lupini_fraisse_2017}
Martino Lupini, \emph{Fra\"{i}ss\'{e} theory and the {P}oulsen simplex},
  Contributions to Discrete Mathematics, to appear.

\bibitem{lupini_fraisse_2016}
\bysame, \emph{Fra\"{i}ss\'{e} limits in functional analysis}, Adv. Math.
  \textbf{338} (2018), 93--174. \MR{3861702}

\bibitem{lusky_gurarij_1976}
Wolfgang Lusky, \emph{The {G}urarij spaces are unique}, Archiv der Mathematik
  \textbf{27} (1976), no.~6, 627--635.

\bibitem{lusky_separable_1977}
\bysame, \emph{On separable {Lindenstrauss} spaces}, Journal of Functional
  Analysis \textbf{26} (1977), no.~2, 103--120.

\bibitem{lusky_consequences_1978}
\bysame, \emph{Some consequences of {W}. {R}udin's paper: {``{$L_p$}-isometries
  and equimeasurability''}}, Indiana University Mathematics Journal \textbf{27}
  (1978), no.~5, 859--866.

\bibitem{lusky_construction_1979}
\bysame, \emph{On a construction of {Lindenstrauss} and {Wulbert}}, Journal of
  Functional Analysis \textbf{31} (1979), no.~1, 42--51.

\bibitem{melleray_note_2010}
Julien Melleray, \emph{A note on {H}jorth's oscillation theorem}, Journal of
  Symbolic Logic \textbf{75} (2010), no.~04, 1359--1365.

\bibitem{melleray_polish_2016}
Julien Melleray, Lionel Nguyen Van~Th{\'e}, and Todor Tsankov, \emph{Polish
  groups with metrizable universal minimal flows}, International Mathematics
  Research Notices. (2016), no.~5, 1285--1307.

\bibitem{melleray_extremely_2014}
Julien Melleray and Todor Tsankov, \emph{Extremely amenable groups via
  continuous logic}, {arXiv}:1404.4590 (2014).

\bibitem{milman_asymptotic_1986}
Vitali~D. Milman and Gideon Schechtman, \emph{Asymptotic theory of
  finite-dimensional normed spaces}, Lecture Notes in Mathematics, vol. 1200,
  Springer-Verlag, Berlin, 1986, With an appendix by M. Gromov.

\bibitem{nesetril_metric_2007}
Jaroslav Ne{\v{s}}et{\v{r}}il, \emph{Metric spaces are {R}amsey}, European
  Journal of Combinatorics \textbf{28} (2007), no.~1, 457--468.

\bibitem{nguyen_van_the_more_2013}
Lionel Nguyen Van~Thé, \emph{More on the {Kechris}-{Pestov}-{Todorcevic}
  correspondence: precompact expansions}, Fundamenta Mathematicae \textbf{222}
  (2013), no.~1, 19--47.

\bibitem{pestov_free_1998}
Vladimir Pestov, \emph{On free actions, minimal flows, and a problem by ellis},
  Transactions of the American Mathematical Society \textbf{350} (1998),
  no.~10, 4149--4165.

\bibitem{pestov_ramsey-milman_2002}
\bysame, \emph{{Ramsey-Milman phenomenon, Urysohn metric spaces, and extremely
  amenable groups}}, Israel Journal of Mathematics \textbf{127} (2002), no.~1,
  317--357.

\bibitem{pestov_dynamics_2006}
\bysame, \emph{Dynamics of infinite-dimensional groups}, University Lecture
  Series, vol.~40, American Mathematical Society, Providence, {RI}, 2006.

\bibitem{poulsen_simplex_1961}
Ebbe~T. Poulsen, \emph{A simplex with dense extreme points}, Annales de
  l'Institut Fourier \textbf{11} (1961), 83--87.

\bibitem{Schechtman_1979}
Gideon Schechtman, \emph{Almost isometric {$L_{p}$}\ subspaces of
  {$L_{p}(0,\,1)$}}, The Journal of the London Mathematical Society \textbf{20}
  (1979), no.~3, 516--528.

\bibitem{uspenskij_universal_2000}
Vladimir~V. Uspenskij, \emph{On universal minimal compact {$G$}-spaces},
  Topology Proceedings, vol.~25, 2000, pp.~301--308.

\bibitem{weaver_lipschitz_1999}
Nik Weaver, \emph{Lipschitz algebras}, World Scientific Publishing Co., Inc.,
  River Edge, {NJ}, 1999.

\bibitem{wojtaszczyk_remarks_1972}
Przemys{\l}aw Wojtaszczyk, \emph{Some remarks on the {G}urarij space}, Studia
  Mathematica \textbf{41} (1972), 207--210.

\end{thebibliography}

\end{document}